\documentclass[11pt]{article}
\usepackage{amssymb, amsmath,amsthm}
\usepackage{graphicx}
\usepackage{enumitem}
\usepackage[ruled,vlined]{algorithm2e}
\usepackage{bm}
\usepackage{calc,array}
\usepackage[margin=1in]{geometry}
\usepackage[numbers]{natbib}
\usepackage{booktabs}
\usepackage{dutchcal}
\usepackage{mathtools}
\usepackage{hyperref}

\newcommand{\Omegah}{\Omega_{h}}
\newcommand{\Omegatwo}{\Omega_{h,ij}^{c}}
\newcommand{\Omegathree}{\Omega_{h,ijk}^{c}}
\newcommand{\bu}{{\bf u} }
\newcommand{\bb}{{\bf b}}
\newcommand{\md}{\,\mathrm{d}}
\newcommand{\bk}{{\bf k}}
\newcommand{\bn}{{\bf n}}
\newcommand{\bzero}{{\bf 0}}
\newcommand{\bap}{{\bm{\alpha}}}
\newcommand{\mathR}{\mathbb{R}}
\newcommand{\calL}{\mathcal{L}}
\newcommand{\calE}{\mathcal{E}}
\newcommand{\p}{\partial}
\newcommand{\Og}{\Omega}
\newcommand{\fl}[2]{\frac{#1}{#2}}
\newcommand{\nn}{\nonumber}
\newcommand{\ap}{\alpha}
\newcommand{\apzero}{{\alpha_0}}
\newcommand{\Dt}{\Delta}
\renewcommand{\theequation}{\arabic{section}.\arabic{equation}}

\newcommand{\bea}{\begin{eqnarray}}
\newcommand{\eea}{\end{eqnarray}}
\newcommand{\beas}{\begin{eqnarray*}}
\newcommand{\eeas}{\end{eqnarray*}}
\newtheorem{remark}{Remark}[section]

\newtheorem{lemma}{Lemma}[section]
\newtheorem{assumption}{Assumption}[section]
\newtheorem{theorem}{Theorem}[section]
\newtheorem{corollary}{Corollary}[section]
\newtheorem{example}{Example}[section]
\newcommand{\bx}{{\bf x} }
\newcommand{\by}{{\bf y} }
\newcommand{\bxi}{\bm{\xi}}

\title{An accurate parallel finite difference method for the variable-order fractional Laplacian in heterogeneous media}

\author{Shiping Zhou\thanks{Department of Computational Mathematics, Science and Engineering, Michigan State University, East Lansing, MI 48824 (Email: zhouship@msu.edu)}, \ \
Yanzhi Zhang\thanks{Department of Mathematics and Statistics, Missouri University of Science and Technology, Rolla, MO 65409 (Email:  zhangyanz@mst.edu)}}

\begin{document}
\date{}
\maketitle

\begin{abstract} 
Anomalous diffusion in heterogeneous media is ubiquitous across physical and biological systems, where spatially varying microstructures give rise to distinct transport regimes.
Variable-order nonlocal operators provide a natural modeling framework for describing heterogeneity and transitions between diffusion regimes.
However, the position-dependent long-range interactions inherent in these operators impose prohibitive memory costs and make numerical methods developed for constant-order operators computationally impractical.
Consequently, accurate and computationally efficient numerical methods for variable-order nonlocal operators remain scarce.
In this paper, we propose accurate finite difference discretizations for the variable-order fractional Laplacian $(-\Delta)^{\alpha(\mathbf{x})/2}$ and develop a fast Fourier transform (FFT)–accelerated scheme for their efficient implementation.
The computational and memory costs are $\mathcal{O}\!\big(MN\log(2N)\big)$ and $\mathcal{O}(MN)$, respectively, where $N$ is the total number of spatial points and $M \ll N$.
The method is also naturally parallelizable.
We establish its consistency and accuracy through a rigorous error analysis that quantifies the local interplay between solution regularity and the spatially varying order $\alpha(\mathbf{x})$.
To the best of our knowledge, this is the first work to quantify how these two factors jointly determine convergence rates, with numerical experiments demonstrating this dependence.
Extensive numerical experiments confirm the accuracy and efficiency of the proposed method.
Applications to anomalous diffusion in composite materials further demonstrate its ability to capture the effects of material heterogeneity.
\end{abstract}

\noindent{\bf Keywords:} 
Variable-order fractional Laplacian, anomalous diffusion, heterogeneous media, finite difference method, parallel algorithms.

\section{Introduction}
\label{section1}

Anomalous diffusion in heterogeneous media has attracted increasing attention across many fields, ranging from material sciences \cite{Patnaik2020review, Patnaik2021-fracture}, geophysics \cite{Zhu2014}, astrophysics \cite{Giusti2020, Giusti2020on}, to biology \cite{Farquhar2018}.
In these media, the underlying microstructures may vary spatially, leading to distinct transport mechanisms across different regimes.
For example, a crossover from normal to anomalous diffusion of proteins and lipids occurs in crowded lipid membranes \cite{Lenzi2016}.
To describe these complex phenomena, traditional approaches often construct separate models for each regime, where the transitions between regimes require careful treatments \cite{Delia2021, Zhu2014}.
More recently, variable-order nonlocal operators have been employed to model anomalous transport in heterogeneous media, as they naturally encode heterogeneity and provide a seamless description of regime transitions \cite{Lenzi2016, Farquhar2018, Delia2022}.
Their effectiveness has been demonstrated in diverse applications such as nonlocal continuum mechanics \cite{Patnaik2020review}, fracture modeling \cite{Patnaik2021-fracture}, and seismic dispersion and attenuation \cite{Zhu2014, Mu2021}, where variable-order nonlocal models often require fewer parameters than conventional approaches while effectively capturing accurate physical properties.
More discussions of their applications can be found in recent reviews \cite{Patnaik2020review, Sun2019} and references therein.

In this work, we focus on anomalous diffusion in heterogeneous media governed by the variable-order fractional Laplacian, which is defined in a hypersingular integral form \cite{Bass1988, Kuhn2021, Luo2018, Chen2020, Samko1993}:
\begin{equation}\label{vofl-int}
    (-\Dt)^{\ap(\bx)/2}u(\bx) = c_{d,\ap(\bx)} \text{P.V.} \, \int_{\mathR^d} \fl{u(\bx)-u(\by)}{|\bx-\by|^{d+\ap(\bx)}} \md \by, \quad \ \mbox{for} \ \, 0  < \ap(\bx) < 2, 
\end{equation}
for dimension $d = 1, 2$, or $3$, where P.V. stands for the Cauchy principal value, and the function
\begin{equation*}
    c_{d,\ap(\bx)} = \fl{2^{\ap(\bx)-1} \ap(\bx) \Gamma{\Big(\fl{\ap(\bx)+d}{2}\Big)}}{\sqrt{\pi^d} \Gamma{\Big(1-\fl{\ap(\bx)}{2}\Big)}}
\end{equation*}
with $\Gamma(\cdot)$ denoting the Gamma function.
The variable-order fractional Laplacian in \eqref{vofl-int} can be viewed as a generalization of the celebrated constant-order fractional Laplacian \cite{Samko2013, Landkof1972}.
It is a nonlocal operator in which each point $\bx$ interacts with all the other points $\by\in\mathR^d$.
Crucially, the interaction strength is position-dependent, i.e., $\ap(\bx)$ is a function of $\bx$.
The order $\alpha(\bx)$, often associated with intrinsic material properties, naturally characterizes position-dependent features and enables the operator \eqref{vofl-int} to capture spatial variability \cite{Patnaik2020review, Sun2019}.
In the literature \cite{Kuhn2021}, it is required that $\alpha:\mathbb{R}^d \to (0,2)$ be Hölder continuous and satisfy $\inf_{\bx \in \mathbb{R}^d} \alpha(\bx) > 0$.
The operator \eqref{vofl-int} has been successfully applied in many fields \cite{Zhu2014, Giusti2020, zhang2025}.
For instance, it enables a single model to simultaneously capture the dispersion and attenuation of seismic waves propagating in viscoelastic media, providing a unified framework compared to traditional approaches that require separate models for dispersion and attenuation; see \cite{Zhu2014, Mu2021} and subsequent work.
In this context, the order $\alpha(\bx) = \fl{1}{\pi}{\rm arctan}(1/Q(\bx))$ depends on the material quality factor $Q(\bx)$.
We remark that other variable-order nonlocal operators are also used in the literature \cite{Du2013a, Tao2019, Delia2022}.
The variable-order fractional Laplacian \eqref{vofl-int} has the advantage of admitting a rigorous stochastic interpretation as the infinitesimal generator of a stable-like Feller process \cite{Chen2020, Kuhn2021, Samko1993}.

Let $\Og \subset {\mathbb R}^d$ denote an open bounded domain.
Consider the steady diffusion problem of the form \cite{Kuhn2021, Wu2024, Hao0024}:
\begin{equation}\label{steady}
\begin{aligned}
    (-\Dt)^{\ap(\bx)/2}u(\bx) &= f(\bx), \quad \text{ for }\bx\in\Og, \\
    u(\bx) &= g(\bx), \quad \,\text{ for }\bx\in\Og^{c},
\end{aligned}
\end{equation}
where $\Og^c=\mathR^d\backslash\Og$ represents the complement of domain $\Og$.
Note that \eqref{steady} serves as a fundamental building block for describing heterogeneous media in many applications.
For instance, \eqref{steady} has been used as a modification of Newtonian gravity in dark-matter phenomenology, where the spatially varying exponent $\ap(\bx)$ characterizes the distance dependence of galactic rotational velocities \cite{Giusti2020, Giusti2020on}.
The nonlocality and heterogeneity of the fractional Laplacian $(-\Dt)^{\ap(\bx)/2}$ pose significant challenges for both analytical and numerical studies.
The Schauder estimates for the problem \eqref{steady} have been established in \cite{Kuhn2021}.
The martingale problem associated with the operator \eqref{vofl-int}, including the existence and uniqueness of its solutions, has been studied from a probabilistic perspective in \cite{Bass1988, Tsuchiya1992}.
In addition, recent works \cite{Xiang2019, Vanthin2021, Wang2021, Liang2022} have also explored the existence of solutions to various PDEs involving the variable-order fractional Laplacian.

Compared to analytical studies, numerical investigations of the problem \eqref{steady} remain very limited, and numerical methods for the variable-order fractional Laplacian \eqref{vofl-int} are particularly underdeveloped.
The main difficulties stem from the combined challenges of nonlocality and spatial heterogeneity inherent to this operator, which impose substantial computational burdens.
Recently, several numerical approaches have been proposed to approximate the operator \eqref{vofl-int}.
In \cite{Wu2024}, mesh-free methods based on radial basis functions were introduced to discretize \eqref{vofl-int} with extended Dirichlet boundary conditions.
In \cite{zhang2025}, a fast Fourier pseudospectral method was proposed to solve variable-order fractional wave equations on a torus.
In \cite{Delia2022, Lei2023}, a truncated nonlocal operator is introduced to approximate $(-\Dt)^{\ap(\bx)/2}$, where the integral in \eqref{vofl-int} is restricted from $\mathbb{R}^d$ to a finite ball $B_\delta(\bx)$ with radius $\delta$.
Based on this truncated operator, various finite element methods were developed in \cite{Delia2022, Lei2023}, and a finite difference method was introduced in \cite{Alzahrani2022}.
On the other hand, instead of directly discretizing the operator \eqref{vofl-int}, two discrete operators based on the discrete Fourier transforms were proposed in \cite{Yu2022, Hao0024} to approximate the variable-order fractional Laplacian \eqref{vofl-int}.

Note that most existing studies have focused on approximating the operator~\eqref{vofl-int} itself.
However, two {\it key challenges} remain largely unresolved.
First, rigorous error analysis of the resulting numerical methods is still limited, particularly regarding the influence of the spatially varying order $\alpha(\mathbf{x})$.
Second, variable-order operators pose a significant computational bottleneck: their numerical discretizations typically yield dense matrices, making matrix assembly, storage, and matrix-vector multiplications prohibitively expensive.
In high-dimensional settings, the resulting matrices may be too large to assemble or store explicitly, rendering many discretization schemes impractical (see, e.g., \cite{Lei2023, Alzahrani2022}).
For example, in the constant-order case, numerical discretization of the fractional Laplacian yields a symmetric (multilevel) Toeplitz matrix, which requires storing only its first row or column and enables efficient matrix–vector multiplication via fast Fourier transforms (FFTs) \cite{Duo2018, Duo2019-FDM, Zhou2024}.
In contrast, in the variable-order case, the spatial dependence of $\ap(\bx)$ destroys the Toeplitz structure, making fast algorithms developed for constant-order operators inapplicable.
Nevertheless, {\it systematic studies of efficient implementation strategies remain scarce, despite their critical importance in the variable-order setting.}

In this paper, we develop finite difference discretizations for the variable-order fractional Laplacian $(-\Delta)^{\alpha(\bx)/2}$ and a fast, naturally parallelizable scheme for efficient and scalable implementation.
Our main contributions are twofold:
\vspace{-1.5mm}
\begin{itemize}\itemsep -1pt
\item To address the high memory and computational costs caused by the spatially varying order $\alpha(\bx)$, we separate the position dependence from the distance dependence in the kernel and thus approximate the variable-order fractional Laplacian by a finite sum of operators $\mathcal{L}_{d,m}$, $m=1,\ldots,M$, weighted by variable coefficients.
Each $\mathcal{L}_{d,m}$ is translation invariant, and its discretization yields a symmetric (multilevel) Toeplitz matrix, enabling efficient matrix--vector multiplication using FFTs.
Moreover, the contributions from different $m$ can be computed independently, making the method naturally parallelizable.
The resulting scheme substantially reduces both computational and memory costs, particularly in higher dimensions.
A comparison of the direct finite difference method (dFDM) and the fast finite difference method (fFDM) is provided in Table~\ref{tab:sec2}.

\item We establish a uniform error estimate for the finite difference method under local regularity assumptions.
The convergence rate is governed by the local solution regularity $p+\varepsilon(\hat x_k)$ at low-regularity points $\hat x_k$, through the regularity gaps $p+\varepsilon(\hat x_k)-\alpha(\hat x_k)$ for $\hat x_k\in\bar{X}\cap\overline{\Omega}$ and the exponents $p+\varepsilon(\hat x_k)+1$ for $\hat x_k\in\bar{X}\backslash\overline{\Omega}$.
The method is uniformly second-order accurate when all these quantities exceed two.
To the best of our knowledge, this is the first analysis to explicitly quantify how local solution regularity and the spatially varying order jointly determine the uniform convergence rate (see Theorem~\ref{thm}).
\end{itemize}
\vspace{-1mm}
Extensive numerical experiments verify the accuracy of our method and the efficiency of its parallel implementation.
They also illustrate the effects of spatial heterogeneity on anomalous diffusion in several applications.

\begin{table}[htb!]
\centering
\begin{tabular}{rlll}
\toprule
Scheme & memory Cost & Computational Cost & Parallelizable \\
\midrule
dFDM  & ${\mathcal O}(N^2)$     & ${\mathcal O}(N^2)$             & No \\
fFDM  & ${\mathcal O}(MN)$      & ${\mathcal O}(MN \log(2N))$        & Yes \\
\bottomrule
\end{tabular}
\caption{Comparison of memory and computational costs of dFDM and fFDM in approximating \eqref{vofl-int}, where $N$ represents the total number of spatial points and $M \ll N$.
}\label{tab:sec2}
\end{table}

The rest of this paper is organized as follows.
Section~\ref{section2} introduces both the direct and fast finite difference discretizations for the one-dimensional variable-order fractional Laplacian, with detailed error analysis provided in Appendix~\ref{thm1proof}.
In Section~\ref{section3}, we extend the fast finite difference method to the general $d$-dimensional cases and present the parallel algorithms.
Section~\ref{section4} reports numerical experiments that verify the accuracy and efficiency of our methods and demonstrate their application to both time-independent and time-dependent diffusion problems in composite materials.
Finally, Section~\ref{section5} concludes the paper.

\section{Finite difference methods}
\label{section2}
\setcounter{equation}{0}

The main challenges in approximating the variable-order fractional Laplacian $(-\Dt)^{\ap(\bx)/2}$ lie in its nonlocality and hypersingular kernel.
Moreover, the spatial dependence of the order $\ap(\bx)$ introduces significant new computational challenges \cite{Wu2024, zhang2025, Lei2023, Delia2022}, making most existing methods designed for the constant-order fractional Laplacian infeasible for the variable-order case.
To address these challenges, we first present the key idea of our finite difference scheme and provide rigorous error estimates.
We then develop a fast and parallelizable finite-difference method in Section~\ref{section2-2} to mitigate storage and computational costs.

In the following, we focus on the one-dimensional ($d = 1$) case.
The generalization of our method to higher dimensions is presented in Section~\ref{section3}.
For $d = 1$, we first formulate the variable-order fractional Laplacian in \eqref{vofl-int} into the form:
\begin{equation}\label{neq}
    (-\Dt)^{\ap(x)/2} u(x) = -c_{1,\ap(x)}\int_0^\infty \fl{u(x+\xi)-2u(x) + u(x-\xi)}{\xi^{1+\ap(x)}}\md\xi.\qquad\qquad 
\end{equation}
It clearly reveals the nonlocal nature of the operator, as every point $x$ has an infinite \textit{interaction domain}.
Moreover, the interaction strength depends on the distance between two points (i.e., $\xi$) and the value of $\ap(x)$ at point $x$.
To address the hypersignularity in (\ref{neq}), we introduce
\beas
\psi(x, \xi) = \fl{u(x+\xi)-2u(x) + u(x-\xi)}{\xi^2},\qquad w(x, \xi) = \xi^{1-\ap(x)},
\eeas
and then rewrite (\ref{neq}) into a weighted integral
\begin{equation}\label{newL}
    (-\Dt)^{\ap(x)/2} u(x) = -c_{1,\ap(x)}\int_0^\infty \psi(x, \xi)\,w(x, \xi)\md\xi, \qquad\qquad 
\end{equation}
that is, the singularity is split into two parts, and the dependence of $\alpha(x)$ is retained in the weight function $w(x, \xi)$.
Note that in the constant-order case, the weight function $w$ is independent of $x$, which significantly simplifies the computation.

Next, we discretize \eqref{newL}.
Let $\Og=(a,b)$ denote the one-dimensional \textit{physical domain}.
Choose an integer $N > 0$, and define mesh size $h=L/N$ with $L = |b-a|$.
Denote the grid points in the physical domain as $x_i = a + ih$ for $i = 0, 1, \ldots, N$, and in the interaction domain as $\xi_k = kh$ for $k = 0, 1, \ldots$, respectively.
Let $\Og_h:=\{x_i=a+ih: i=1,\dots,N-1\}$ denote the set of interior grid points.
We then partition the domain of $\xi$ into two parts, i.e. $[0, L]$ and $(L, \infty)$, and rewrite the integral in (\ref{newL}) as
\begin{equation}\label{fdm-1d-split}
    (-\Dt)^{\ap(x)/2} u(x) = -c_{1, \ap(x)}\bigg(\sum_{k=1}^{N}\int_{\xi_{k-1}}^{\xi_k}\,\psi(x,\xi)\,w(x,\xi)\md\xi + \int_{L}^{\infty}\psi(x,\xi)\,w(x,\xi)\md\xi\bigg).
\end{equation}
For each $k$, the integral over $[\xi_{k-1}, \xi_k]$ can be approximated by the weighted trapezoidal method \cite{Duo2018, Duo2019-TFL}, i.e.,
\begin{equation}\label{part1}
    \int_{\xi_{k-1}}^{\xi_{k}}\psi(x,\xi)\,w(x,\xi)\md\xi \approx \fl{1}{2}\Big(\psi(x,\xi_{k-1})+\psi(x,\xi_{k})\Big)     \int_{\xi_{k-1}}^{\xi_{k}}w(x,\xi)\md\xi, \quad\  k = 1, \ldots, N,
\end{equation}
where the integral of $w$ can be found analytically.
Note that at $\xi = 0$, the function $\psi(x, 0)$ is defined in the limit sense, i.e.,
\bea\label{phi_xi0}
\psi(x, 0) = \lim_{\xi \to 0} \psi(x, \xi) \approx  \fl{u(x+h) - 2u(x) + u(x-h)}{h^2}
\eea
with a sufficiently small $h$.
While the second part of \eqref{fdm-1d-split} can be simplified to
\begin{equation}\label{part2}
\int_{L}^{\infty}\psi(x,\xi)\,w(x,\xi)\md\xi = -\fl{2}{\ap(x) L^{\ap(x)}} u(x) + \int_{L}^{\infty}\fl{g(x+\xi)+g(x-\xi)}{\xi^{1+\ap(x)}}\md\xi,
\end{equation}
by the boundary condition $u(x) = g(x)$ for $x \in \Og^c$, since $x \pm \xi \in \Og^c$ for any $x \in \Og$ and $\xi > L$.

Combining (\ref{fdm-1d-split}) with (\ref{part1})--(\ref{part2}), noticing the definition of $\psi(x, \xi)$, and simplifying the calculation, we then obtain the finite difference discretization of the variable-order fractional Laplacian at point $x = x_i$:
\begin{equation}\label{1Dscheme}
 (-\Dt)^{\ap(x)/2}_{h} u(x)\Big|_{x = x_i} = \fl{c_{1, \ap_i}}{(2-\ap_i)h^{\ap_i}} \bigg[\bigg(\sum_{k = 1}^N a_{ik} +  \fl{2(2-\ap_i)}{\ap_i N^{\ap_i}}\bigg)u_i - \fl{1}{2}\sum_{\substack{j=1,  j\neq i}}^{N-1}a_{i|j-i|} u_j \bigg] + b_i + r_i, 
\end{equation}
for $i = 1, 2, \ldots, N-1$, where $\ap_i=\ap(x_i)$, $u_i = u(x_i)$, and the coefficients
\beas
&&a_{ik} = \fl{(k+1)^{2-\ap_i}-(k-1)^{2-\ap_i}}{k^2}, \qquad \mbox{for} \ \ 1 < k < N, \\
&&a_{i1} = 2^{2-\ap_i}+1, \qquad a_{iN} = \fl{N^{2-\ap_i} - (N-1)^{2-\ap_i}}{N^2}. 
\eeas
The terms $b_i$ and $r_i$ account for the contributions of boundary conditions from \eqref{part1} and \eqref{part2}, respectively, and are given by
\beas
&&b_i=-\fl{c_{1, \ap_i}}{2(2-\ap_i)h^{\ap_i}}\bigg(\sum_{j={i-N}}^{0} a_{i|i-j|}\, g(x_j) + \sum_{j=N}^{i+N} a_{i|i-j|}\,g(x_j)\bigg),\\
&&r_i=-c_{1, \ap_i}\int_{L}^{\infty} \fl{g(x_{i} + \xi) + g(x_{i} -\xi)}{\xi^{1+\ap_i}}\md\xi.
\eeas
If homogeneous Dirichlet boundary conditions are considered, we have $b_i = r_i \equiv 0$ for $1 \le i \le N-1$.
Denote the vectors ${\bf u} = (u_1, u_2, \,\ldots,\, u_{N-1})^T$, $\bb = (b_1, b_2, \,\ldots,\, b_{N-1})^T$, and ${\bf r} = (r_1, r_2, \,\ldots,\, r_{N-1})^T$.
Then the scheme in \eqref{1Dscheme} can be written into a matrix-vector form $(-\Dt)_h^{\ap(x)/2}\bu = A \bu + \big(\bb + {\bf r}\big)$ with
$$
A_{ii} = \fl{c_{1, \ap_i}}{(2-\ap_i)h^{\ap_i}} \bigg(\sum_{k = 1}^N a_{ik} + \fl{2(2-\ap_i)}{\ap_i N^{\ap_i}}\bigg),\qquad A_{ij} = -\fl{c_{1, \ap_i}}{2(2-\ap_i)h^{\ap_i}} a_{i|j-i|}, \quad \mbox{for} \ i\neq j.
$$
It is easy to see that the matrix $A_{(N-1)\times (N-1)}$ is a dense matrix, due to the nonlocality of the variable-order fractional Laplacian, which introduces a substantial burden on storing $A$ and performing matrix–vector multiplications.

In the constant-order cases, i.e., $\ap(x)\equiv\ap$, the matrix $A$ reduces to a symmetric Toeplitz matrix, which allows the development of FFT-based algorithms to efficiently perform matrix–vector multiplications.
Consequently, the memory and computational costs in the constant-order case reduce to ${\mathcal O}(N)$ and ${\mathcal O}(2N\log(2N))$, respectively; see \cite{Duo2018, Duo2019-FDM} for more details.
In the variable-order case, however, the matrix no longer possesses a Toeplitz structure, and thus FFT-based fast algorithms cannot be applied to accelerate matrix–vector multiplication because of the spatial dependence of $\alpha(x)$.
Consequently, the memory and computational costs of implementing the scheme \eqref{1Dscheme} are ${\mathcal O}(N^2)$ and ${\mathcal O}(N^2)$, respectively, both of which grow rapidly with $N$.

\subsection{Error analysis}
\label{section2-1}

In this subsection, we present error estimates of our method \eqref{1Dscheme} in approximating the variable-order fractional Laplacian $(-\Dt)^{\ap(x)/2}$.
Compared to the constant-order cases \cite{Duo2018, Duo2019-FDM}, the error analysis in the variable-order case is substantially more complicated.
Since both $\alpha(x)$ and $u(x)$ vary spatially, their interaction plays an important role in determining the accuracy of numerical methods, which is not observed in the analysis of constant-order cases.

For a nonnegative integer $p$ and $s\in(0,1]$, we write $f\in C^{p,s}(D)$ if $f\in C^p(D)$ and $D^pf$ is H\"older continuous on $D$ with exponent $s$, that is, there exists a constant $C>0$ such that $|D^pf(x)-D^pf(y)|\le C|x-y|^s$ for all $x,y\in D$.
To describe regularity by a single exponent $q>0$, we adopt the decomposition $q=\lfloor q\rfloor^-+\{q\}$, where $\lfloor q\rfloor^-:=\lceil q\rceil-1$ is the largest integer strictly less than $q$ and $\{q\}:=q-\lfloor q\rfloor^-\in(0,1]$.
Unlike the standard fractional part, $\{q\}=1$ when $q$ is a positive integer, so that the H\"older exponent is always positive and $C^{\lfloor q\rfloor^-,\{q\}}(D)$ is well defined for every $q>0$.

Let $\Omega_L:=[a-L,b+L]$.
The following assumption specifies the regularity conditions required for the error analysis.

\begin{assumption}\label{assump}
The following conditions hold.
\leavevmode
\begin{enumerate}[label=\textup{(A\arabic*)},ref=A\arabic*]
\item\label{A1}\emph{Regularity and bounds of the variable order.} There exist constants $0<\alpha_-\le\alpha_+<2$, $\gamma\in(0,1]$, and $C_\alpha>0$ such that
$$
\alpha_-\le\alpha(x)\le\alpha_+,
\qquad
|\alpha(x)-\alpha(y)|\le C_\alpha|x-y|^\gamma,
\qquad x,y\in\overline{\Omega}.
$$

\item\label{A2}\emph{Local regularity of $u$.} There exist finitely many distinct points $\hat x_1,\dots,\hat x_K\in\Omega_L$, a nonnegative integer $p$, and positive exponents $\varepsilon(\hat x_k)$ such that $\sigma_k:=p+\varepsilon(\hat x_k)\in(0,4)$.
Set $\bar X=\{\hat x_1,\dots,\hat x_K\}$ and assume $u\in C^0(\Omega_L)\cap C^4(\Omega_L\setminus\bar X)$.
The case $K=0$ is allowed; then $\bar X=\emptyset$ and $u\in C^4(\Omega_L)$.
There exist a constant $\delta>0$ for which the intervals $B_\delta(\hat x_k):=(\hat x_k-\delta,\hat x_k+\delta)$ are pairwise disjoint and a constant $C_u>0$ satisfying the following bounds.
\begin{enumerate}[label=\textup{(\roman*)},ref=\textup{(\roman*)}]
\item\label{A2i} Outside the low-regularity neighborhoods,
\beas
|u^{(m)}(\zeta)|\le C_u,
\qquad
\zeta\in\Omega_L\setminus\bigcup_{k=1}^K B_\delta(\hat x_k),
\qquad m=0,\dots,4.
\eeas
\item\label{A2ii} For each $k$, there exists a polynomial $P_k$ of degree at most three such that $\widetilde r_k:=u-P_k$ satisfies
\beas
|\widetilde r_k^{(m)}(\zeta)|\le C_u|\zeta-\hat x_k|^{\sigma_k-m},
\qquad 
0<|\zeta-\hat x_k|<\delta,\quad \zeta\in\Omega_L,
\qquad m=0,\dots,4.
\eeas
\end{enumerate}

\item\label{A3}\emph{Compatibility.
} For every $\hat x_k\in\bar X\cap\overline\Omega$, we have $\sigma_k>\alpha(\hat x_k)$.
\end{enumerate}
\end{assumption}

Define the error function
\begin{equation}\label{error}
e_{\Dt}^h(x)=(-\Dt)^{\ap(x)/2}u(x)-(-\Dt)_h^{\ap(x)/2}u(x), \qquad x\in\Og_h,
\end{equation}
where $(-\Dt)_h^{\ap(x)/2}$ represents the finite difference approximation of $(-\Dt)^{\ap(x)/2}$ with mesh size $h$.
Then we have the following error estimates.

\begin{theorem}[Uniform consistency]\label{thm}
Suppose that Assumption~\ref{assump} holds, and define
\begin{equation}\label{mu}
\mu:=\min\Bigl\{
\min_{\hat x_k\in\bar X\cap\overline\Omega}
\bigl(\sigma_k-\alpha(\hat x_k)\bigr),\
\min_{\hat x_k\in\bar X\setminus\overline\Omega}
(\sigma_k+1)
\Bigr\},
\end{equation}
with the convention that the minimum over an empty set is $+\infty$.
Then there exist constants $h_0\in(0,1)$ and $C>0$, independent of $x$ and $h$, such that for all $h < h_0$,
\begin{equation}\label{thm1-ineq2}
\|e_\Dt^h\|_{\ell^\infty(\Og_h)} =  \max_{x\in\Og_h}|e_\Delta^h(x)| 
\le C
\begin{cases}
h^\mu, & 0<\mu<2,\\
h^2\log(1/h), & \mu=2,\\
h^2, & \mu>2.
\end{cases}
\end{equation}
In particular, $\mu>0$ follows from Assumption~\eqref{A3} and the positivity of the exterior exponents.
Hence, the approximation is uniformly consistent in the discrete maximum norm and is second-order accurate when $\mu>2$.
\end{theorem}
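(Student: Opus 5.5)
We work at a fixed grid point $x=x_i\in\Og_h$ and write $\alpha=\alpha(x_i)$. The tail integral over $(L,\infty)$ is treated exactly by \eqref{part2}, so it contributes no error. The error therefore reduces to $c_{1,\ap}$ times the weighted trapezoidal quadrature error
\[
E(x)=\sum_{k=1}^N\int_{\xi_{k-1}}^{\xi_k}\Big(\psi(x,\xi)-\Pi_k\psi(x,\xi)\Big)w(x,\xi)\md\xi,
\]
where $\Pi_k\psi$ is the linear interpolant of $\psi(x,\cdot)$ on $[\xi_{k-1},\xi_k]$. On the first cell there is an additional error from replacing $\psi(x,0)$ by the central second difference \eqref{phi_xi0}. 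Since $c_{1,\ap}$ is bounded uniformly by (A1), it suffices to bound $E$ uniformly in $x$. Two standard estimates will be used on each cell:
\[
\Big|\int_{\xi_{k-1}}^{\xi_k}(\psi-\Pi_k\psi)w\md\xi\Big|\le Ch^2\max_{[\xi_{k-1},\xi_k]}|\partial_\xi^2\psi|\int_{\xi_{k-1}}^{\xi_k}\xi^{1-\ap}\md\xi,
\]
and the cruder bound $\le C\max|\psi|\int w$ for cells where $\psi$ is not smooth. For smooth $u$ we use the representation $\psi(x,\xi)=\int_0^1(1-t)\big[u''(x+t\xi)+u''(x-t\xi)\big]dt$. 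It gives $\partial_\xi^2\psi$ in terms of $u^{(4)}$ on $[x-\xi,x+\xi]$. In the regular case $K=0$ this yields $|E|\le Ch^2\sum_k\int_{\xi_{k-1}}^{\xi_k}\xi^{1-\ap}\md\xi\le Ch^2L^{2-\ap}$. The first-cell correction is $O(h^2)$ by Taylor expansion, which gives the $h^2$ rate.

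For $K>0$ we split the cells $k$ according to the distance from the singular points. For each $\hat x_k$ the function $\psi(x,\cdot)$ loses smoothness where $x\pm\xi$ passes near $\hat x_k$, i.e.\ near $\xi^*=|x-\hat x_k|$. For $\xi$ away from $\xi^*$ we bound $\partial^2_\xi\psi$ as above, with $u^{(4)}$ replaced by the bounds of (A2)(ii) applied to $\widetilde r_k=u-P_k$. The cubic $P_k$ contributes only smooth terms. This gives $|\partial_\xi^2\psi|\lesssim \xi^{-2}\,|\xi-\xi^*|^{\sigma_k-2}$ plus $O(\xi^{-2}\cdot\text{smooth})$, or $|\xi-\xi^*|^{\sigma_k-4}$ type terms. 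The $O(1)$ cells within distance $h$ of $\xi^*$ are handled with the cruder bound. On them, $\psi-\Pi_k\psi$ is controlled by the H\"older modulus of $\widetilde r_k$, namely $h^{\sigma_k}\xi^{-2}$ after subtracting the linear part. Summing the resulting series $h^2\sum_j (jh)^{\sigma_k-4}h$ over cells away from $\xi^*$ produces three regimes: $h^{\sigma_k}$-type behaviour when the exponent makes the sum divergent, $h^2\log(1/h)$ at the borderline, and $h^2$ otherwise. These regimes are exactly the trichotomy in \eqref{thm1-ineq2}.

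The main obstacle is the uniformity in $x$: the worst case is $x$ at distance $O(h)$ from an interior $\hat x_k$, or $x=\hat x_k$ itself. There the singularity of $\psi$ at $\xi$ near $0$ couples with the weight $\xi^{1-\ap}$. We will treat this by splitting further into $\xi<2|x-\hat x_k|$ and $\xi\ge 2|x-\hat x_k|$. For small $\xi$ the relevant quantity is $\int_0^{h}|\psi|\xi^{1-\ap}\md\xi$ with $|\psi|\lesssim \xi^{\sigma_k-2}$, giving $h^{\sigma_k-\ap}$. Here (A3) guarantees integrability, and the order at the singular point is $\alpha(\hat x_k)$ rather than $\alpha(x)$. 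To pass from $\alpha(x)$ to $\alpha(\hat x_k)$ we use the H\"older bound in (A1): $h^{\sigma_k-\alpha(x)}=h^{\sigma_k-\alpha(\hat x_k)}h^{\alpha(\hat x_k)-\alpha(x)}$, and $h^{-C_\alpha|x-\hat x_k|^\gamma}$ stays bounded when $|x-\hat x_k|\lesssim h$. For $x$ farther away, the near-$\xi^*$ contribution carries the factor $(\xi^*)^{-1-\ap}$ and is smaller. For exterior points $\hat x_k\notin\overline\Omega$, $\xi^*$ is bounded below by $\mathrm{dist}(x,\hat x_k)$, which is positive but may be as small as $O(h)$ near $\partial\Omega$. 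So we only lose via the cell containing $\xi^*$, where $\xi$ is comparable to $\xi^*$. Integrating the $|\xi-\xi^*|^{\sigma_k-2}$-type singularity in $\psi$ (the $\xi^{-2}$ factor being harmless) gives $h^{\sigma_k+1}$, and this yields the exponent $\sigma_k+1$ in \eqref{mu}. Taking the minimum over $k$ and adding the smooth $O(h^2)$ contribution completes the bound, with $h_0$ chosen so that $h<\delta$ and the neighbourhoods $B_\delta(\hat x_k)$ separate the cases.
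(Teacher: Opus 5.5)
Your overall architecture is the paper's. You split the cells by the distance of $\xi$ from $d_k=|x-\hat x_k|$ (your $\xi^*$), use smooth bounds away from it and the H\"older modulus on the $O(1)$ straddling cells, bound the first cell by $h^{\sigma_k-\alpha}$, and convert $\alpha(x)$ to $\alpha(\hat x_k)$ through (A1). As written, however, several steps fail.

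First, \eqref{part1} is not product integration of the linear interpolant. It multiplies the endpoint average $\frac12(\psi(x,\xi_{k-1})+\psi(x,\xi_k))$ by $\int w$. The discrepancy with your functional is $\frac{\psi(x,\xi_k)-\psi(x,\xi_{k-1})}{h}\int_{\xi_{k-1}}^{\xi_k}(\xi-m_k)\xi^{1-\alpha}\md\xi$, where $m_k$ is the midpoint, and it is nonzero whenever $\alpha(x)\neq1$. So your cell bound $Ch^2\max|\partial_\xi^2\psi|\int w$ is false for this scheme; it already fails for $\psi$ linear in $\xi$. The midpoint expansion gives the correct bound \eqref{pre_Ei}, which has the extra term $Ch^3\xi_i^{-\alpha}\|\psi_\xi\|_\infty$. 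That term is of the same or lower order in every regime, so the rates survive, but it must be carried.

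Second, consider the first cell near $\hat x_k\in\overline\Omega$ with $d_k\lesssim h$. Your bound $|\psi(x,\xi)|\lesssim\xi^{\sigma_k-2}$ is false for $\sigma_k>2$, because $\psi(x,\xi)\to u''(x)\approx P_k''(x)$, which is nonzero in general. Then $\int_0^h|\psi|\xi^{1-\alpha}\md\xi$ yields only $h^{2-\alpha}$. For $\sigma_k=3$ and $\alpha(\hat x_k)=1/2$, for instance, the theorem needs $h^2$ but you get $h^{3/2}$. You must exploit the cancellation in $\psi(x,\xi)-\psi(x,h)$: use $|\partial_\xi\psi|\le C\xi^{\sigma_k-3}$ and Fubini to get $\int_0^h\eta^{\sigma_k-3}\eta^{2-\alpha}\md\eta\lesssim h^{\sigma_k-\alpha(x)}$. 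Uniformity of that constant needs $\sigma_k-\alpha(x)\ge\frac12(\sigma_k-\alpha(\hat x_k))$ for $d_k<2h$. This is a condition on $h_0$; (A3) alone does not give it.

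The more serious omission is the step that makes the result genuinely variable-order. You compare $\alpha(x)$ with $\alpha(\hat x_k)$ only when $|x-\hat x_k|\lesssim h$. For $2h<d_k<2\rho$ the bounds contain $h^2d_k^{\sigma_k-2-\alpha(x)}$, $h^{\sigma_k+1}d_k^{-1-\alpha(x)}$ and $h^2\int_{2d_k}^{\rho}\xi^{\sigma_k-3-\alpha(x)}\md\xi$. There $|\alpha(x)-\alpha(\hat x_k)|$ is only $O(d_k^\gamma)$, so a comparison at scale $h$ leaves the unbounded factor $h^{-C_\alpha d_k^\gamma}$. Your claim that this contribution ``is smaller'' is therefore unproved. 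What is missing is the comparison at the natural scale: $t^{\alpha(\hat x_k)-\alpha(x)}\le\exp(C_\alpha2^\gamma t^\gamma|\log t|)\le C$ for $d_k\le2t$, $0<t\le L$. Apply it with $t=d_k$ and $t=\xi$, together with $d_k\ge2h$; for example, $h^{\sigma_k+1}d_k^{-1-\alpha(x)}\le Ch^{\sigma_k-\alpha(\hat x_k)}(h/d_k)^{1+\alpha(\hat x_k)}$.

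Your series $h^2\sum_j(jh)^{\sigma_k-4}h$ also drops the weight and merges two different sums. The sum at small $\xi$ is $h^2\int_h^\rho\xi^{\sigma_k-3-\alpha}\md\xi$, with threshold $\sigma_k-\alpha=2$. The sum near $d_k$ is $h^2d_k^{-1-\alpha}\int_h^{3d_k}s^{\sigma_k-2}\md s$, with threshold $\sigma_k=1$. The trichotomy has to be derived from these two sums, not asserted.

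Finally, for $\hat x_k\notin\overline\Omega$ it is false that $d_k$ may be $O(h)$. In fact $d_k\ge\operatorname{dist}(\hat x_k,\overline\Omega)>0$ uniformly in $x$ and $h$. This fixed lower bound is exactly what makes $\xi^{-2}$ and the weight harmless and produces the $\alpha$-free exponent $\sigma_k+1$. The $h^2\log(1/h)$ at $\sigma_k=1$ then comes from summing $h^3s^{\sigma_k-2}$ over all cells at distance $s\ge h$ from $d_k$, not from the single cell containing $d_k$.
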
 
The detailed proof of Theorem~\ref{thm} is given in Appendix~\ref{thm1proof}.
The uniform convergence rate is governed by the minimum of the local regularity gaps $\sigma_k-\alpha(\hat x_k)$ at low-regularity points in $\overline{\Omega}$ and the exponents $\sigma_k+1$ at those outside $\overline{\Omega}$, subject to the second-order threshold.
Thus, the estimate captures the interaction between solution regularity and the fractional order at each low-regularity point, rather than relying on the maximal order over the physical domain.
This distinguishes our estimate from that in \cite{Hao0024}, which simplifies the analysis by using the maximal order $\alpha_{\max}$; see Remark~\ref{remark-Hao} for a detailed comparison.

\begin{corollary}[Uniform second-order accuracy]
Under the assumptions of Theorem~\ref{thm}, the scheme \eqref{1Dscheme} is uniformly second-order accurate if
$$
\sigma_k>\alpha(\hat x_k)+2
\quad\text{for every }\hat x_k\in\bar X\cap\overline{\Omega},
\qquad
\sigma_k>1
\quad\text{for every }\hat x_k\in\bar X\setminus\overline{\Omega}.
$$
These conditions are equivalent to $\mu>2$ and hold vacuously when $\bar X=\varnothing$.
At the critical threshold $\mu=2$, Theorem~\ref{thm} instead gives the uniform bound $O(h^2[1+\log(1/h)])$.
\end{corollary}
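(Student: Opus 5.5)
The statement to prove is the corollary: the scheme \eqref{1Dscheme} is uniformly second-order accurate under the stated conditions, these conditions are equivalent to $\mu>2$, and at $\mu=2$ Theorem~\ref{thm} gives $O(h^2[1+\log(1/h)])$. The whole argument reduces to Theorem~\ref{thm}, which we take as given. What remains is to unpack the definition \eqref{mu} of $\mu$ and then read off the appropriate case of \eqref{thm1-ineq2}.

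\emph{Step 1: equivalence with $\mu>2$.} By \eqref{mu}, $\mu$ is the minimum of two finite minima over the finite set $\bar X$, with empty minima equal to $+\infty$. A minimum over finitely many reals exceeds $2$ exactly when every entry exceeds $2$. Hence $\mu>2$ holds if and only if two families of inequalities hold:
\begin{itemize}
\item $\sigma_k-\alpha(\hat x_k)>2$, that is $\sigma_k>\alpha(\hat x_k)+2$, for every $\hat x_k\in\bar X\cap\overline\Omega$;
\item $\sigma_k+1>2$, that is $\sigma_k>1$, for every $\hat x_k\in\bar X\setminus\overline\Omega$.
\end{itemize}
If $\bar X=\varnothing$, both minima are $+\infty$, so $\mu=+\infty>2$ and the conditions hold vacuously. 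In that case $u\in C^4(\Omega_L)$ by (A2).

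\emph{Step 2: compatibility with Assumption~\ref{assump}.} Under (A2) we have $\sigma_k\in(0,4)$, so the interior condition forces $\alpha(\hat x_k)<2$. This is automatic from (A1) and does not conflict with any other hypothesis. The conditions are therefore consistent with Theorem~\ref{thm}'s hypotheses, and no further checking is needed.

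\emph{Step 3: conclusion.} When $\mu>2$, the third case of \eqref{thm1-ineq2} gives $\|e_\Delta^h\|_{\ell^\infty(\Omega_h)}\le Ch^2$ for all $h<h_0$, with $C$ independent of $x$ and $h$. This is uniform second-order accuracy. When $\mu=2$, the middle case gives $Ch^2\log(1/h)\le Ch^2[1+\log(1/h)]$, which is the stated critical bound.

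There is no genuine obstacle here; the only care needed is the convention for empty minima. All the analytic difficulty is contained in Theorem~\ref{thm}, whose proof is deferred to Appendix~\ref{thm1proof}.
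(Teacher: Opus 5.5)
Your proposal is correct and takes the same approach as the paper, which treats the corollary as an immediate consequence of Theorem~\ref{thm}. Unpacking \eqref{mu} shows that $\mu>2$ is exactly the stated pair of conditions (holding vacuously when $\bar X=\varnothing$); the last two cases of \eqref{thm1-ineq2} then give $O(h^2)$ and, at $\mu=2$, $Ch^2\log(1/h)\le Ch^2[1+\log(1/h)]$. Your Step~2 is harmless but unnecessary, since the corollary already assumes the hypotheses of the theorem.
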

To illustrate the interplay between the regularity of $u(x)$ and the order $\ap(x)$, Fig.~\ref{fig:thm1} shows the numerical errors for different mesh sizes $h$ using the compactly supported function $u(x)=[(4-x^2)_+]^q$.
Its low-regularity points are $x=\pm2$, with local regularity exponents $\sigma_1=\sigma_2=q$.
We consider two domains to compare the effects of boundary and exterior low-regularity points.
\begin{figure}[htb!]
    \centerline{
    \raisebox{5mm}{(a)}\includegraphics[height=5.19cm, width=0.43\linewidth]{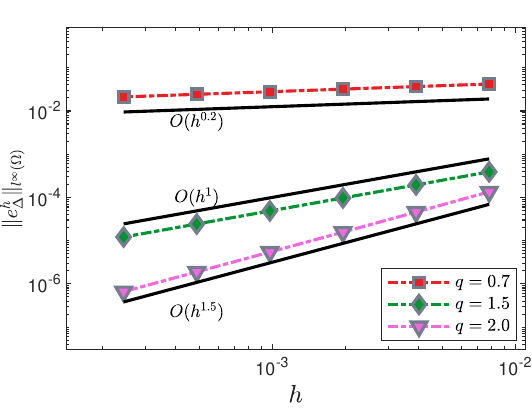}
    \hspace{2mm}
    \raisebox{5mm}{(b)}\includegraphics[height=5.19cm, width=0.43\linewidth]{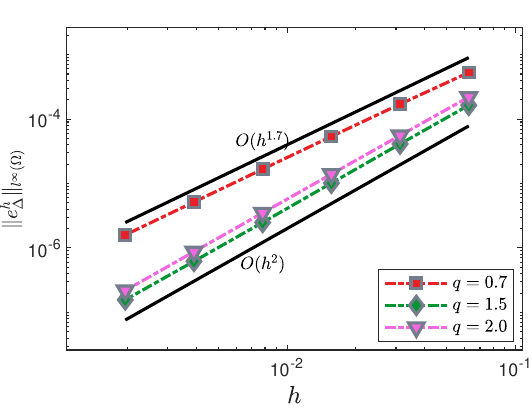}}
    \vspace{-4mm}
\caption{Numerical errors in approximating $(-\Delta)^{\alpha(x)/2}u(x)$ with $u(x) = \big[(2^2-x^2)_+\big]^q$ and $\alpha(x)=0.3-0.1x$.
The errors are measured on: (a) $\Omega=(-2,2)$; (b) $\Omega=(-1,1)$.}
    \label{fig:thm1}
\end{figure}
In Fig.~\ref{fig:thm1}(a), $\Omega=(-2,2)$, so both low-regularity points lie on the boundary.
Since $\alpha(-2)=0.5$ and $\alpha(2)=0.1$, the smaller regularity gap occurs at $x=-2$, giving $\mu=q-0.5$.
The observed convergence rates agree with the bounds in Theorem~\ref{thm}.
In Fig.~\ref{fig:thm1}(b), $\Omega=(-1,1)$, so $u$ is smooth over $\overline{\Omega}$ and both low-regularity points lie outside the domain, giving $\mu=q+1$.
Theorem~\ref{thm} therefore predicts order $q+1$ for $0<q<1$ and second-order accuracy for $q>1$.
The observed rates agree well with these predictions.
More results and discussion are provided in Section~\ref{section4-1}.

\begin{remark}[Constant-order fractional Laplacian]
\label{remark-constant}
When $\alpha(x)\equiv\alpha\in(0,2)$, condition \textup{(A1)} holds automatically, and Theorem~\ref{thm} applies under \textup{(A2)}--\textup{(A3)} with
\begin{equation}\label{thm1-ineq3}
\mu=\min\Bigl\{
\min_{\hat x_k\in\bar X\cap\overline{\Omega}}
(\sigma_k-\alpha),
\quad
\min_{\hat x_k\in\bar X\setminus\overline{\Omega}}
(\sigma_k+1)
\Bigr\}.
\end{equation}
If homogeneous Dirichlet boundary conditions are imposed (i.e., $g(x)\equiv0$), there are no strictly exterior low-regularity points.
The convergence rate is therefore governed by the smallest gap $\sigma_k - \alpha$ at interior and boundary low-regularity points, subject to the second-order threshold.
In particular, if $\sigma_k\ge s>\alpha$ at all such points, the uniform error is bounded by $O(h^{\min\{s-\alpha,2\}})$, with a possible logarithmic factor when $s-\alpha=2$.
This provides a connection with constant-order error estimates based on global solution regularity, such as those considered in \cite{Duo2018}.
\end{remark}
Remark~\ref{remark-constant} connects Theorem~\ref{thm} with existing constant-order estimates \cite{Duo2018}, while extending the analysis to nonhomogeneous Dirichlet boundary conditions.
The theorem provides a unified framework for uniform error estimates for both constant- and variable-order fractional Laplacians.
Its central contribution is to quantify the local interplay between solution regularity and the fractional order, while accounting for low-regularity points both inside and outside the domain.
This distinguishes our analysis from estimates based solely on global regularity and the maximal fractional order.

\begin{remark}\label{remark-Hao}
In \cite{Hao0024}, a discrete Fourier transform-based operator is proposed to approximate the variable-order fractional Laplacian.
For $u\in B^{s+\alpha_{\max}}(\mathbb{R})$, the authors establish a truncation error bound of $\mathcal{O}(h^s)$, where $B^s(\mathbb{R})$ denotes the spectral Barron space and $\alpha_{\max}:=\sup_{x\in\Omega}\alpha(x)$.
This estimate measures the regularity of $u$ globally against the maximal fractional order $\ap_{\max}$.
In contrast, Theorem~\ref{thm} accounts for the local regularity of $u$ relative to $\alpha(\hat x_k)$ at each low-regularity point in $\overline{\Omega}$.
For a given convergence rate, our conditions allow lower local solution regularity at points where the fractional order is smaller.
\end{remark}

\subsection{Fast finite difference method}
\label{section2-2}

As discussed, the matrix associated with scheme \eqref{1Dscheme} no longer has a Toeplitz structure due to the spatial dependence of $\alpha(x)$.
Consequently, the FFT-based fast algorithms developed for the constant-order cases in \cite{Duo2018, Duo2019-FDM} cannot be applied to accelerate matrix–vector multiplications in the variable-order setting.
In contrast, direct computation requires storing the full dense matrix and performing dense matrix–vector products, and consequently both memory and computational costs of scheme \eqref{1Dscheme} are ${\mathcal O}(N^2)$.
These costs increase rapidly with the number of spatial points $N$, making the direct approaches computationally infeasible, particularly in higher dimensions ($d>1$); see Tables~\ref{Tab4-2-1} and ~\ref{tab:2d-cpu} for further discussion.
To overcome these challenges, we propose a fast and parallelizable finite difference method in this subsection.

From \eqref{newL}, it is evident that the spatial dependence of the weight function $ w(x,\xi) $ causes the matrix $ A $ to lose its Toeplitz structure.
Therefore, to recover the Toeplitz structure, we propose to decouple the $x$- and $\xi$-dependence of the weight function $w(x,\xi)$.
In particular, for any $ \alpha_0 \in (0,2) $, the weight function in \eqref{newL} can be rewritten as
\begin{equation}\label{taylor}
   w(x, \xi) = \xi^{1-\ap_0} \sum_{m=0}^{\infty}\fl{\big[\ap_0-\ap(x)\big]^m \big(\ln{\xi}\big)^m}{m!}.
\end{equation}
In this paper, we choose $\ap_0 = \fl{1}{2}\big(\sup_{x\in\Og}\ap(x)+\inf_{x\in\Og}\ap(x)\big)$.
Note that in the literature \cite{Zhu2014}, a constant-order fractional Laplacian with the averaged power $\bar{\ap} = \int_{\Og}\ap(x) \md x$ has been used to approximate \eqref{vofl-int} in order to alleviate the memory and computational challenges caused by the spatial dependence of $\alpha(x)$.
However, while this approximation significantly simplifies the computation, it homogenizes the problem and fails to accurately capture the inherent heterogeneity of the system (see Fig.~\ref{fig:demo1}(a) and (c)).

Substituting (\ref{taylor}) into (\ref{newL}), we then rewrite the variable-order fractional Laplacian as
\bea\label{FL-exp}
(-\Dt)^{\ap(x)/2}u(x) = -c_{1,\ap(x)} \sum_{m=0}^{\infty}{\big(\ap_0-\ap(x)\big)^m }\underbrace{\int_{0}^{\infty}\psi(x,\xi)\,\fl{\xi^{1-\ap_0}(\ln\xi)^m}{m!}\md\xi}_{{\mathcal L}_{m}u(x)}, 
\eea
where we introduce the nonlocal operator ${\mathcal L}_{m}$ for $m\in {\mathbb N}^0$.
Note that ${\mathcal L}_{m}$ has a singular kernel at $\xi = 0$, but it is spatially homogeneous as its nonlocality is independent of $x$.
For $m = 0$, we have
\beas
-c_{1,\ap_0}{\mathcal L_0}u(x) = (-\Dt)^{\ap_0/2},
\eeas
that is, the leading term ${\mathcal L}_0$ can be viewed as an approximation to the constant-order fractional Laplacian $(-\Dt)^{\ap_0/2}$.
In rewriting \eqref{FL-exp}, the spatial dependence of $\alpha(x)$ is transferred into variable coefficients.
Thus, the variable-order fractional Laplacian in \eqref{FL-exp} may be interpreted as a spatially dependent perturbation of the constant-order operator $(-\Delta)^{\alpha_0/2}$.
Importantly, although these perturbation terms decay rapidly as $m$ increases, they play a crucial role in capturing heterogeneity and therefore cannot be neglected.

\begin{figure}[htb!]
\centerline{
    \raisebox{5mm}{(a)}
    \includegraphics[height=5.19cm, width=0.4\linewidth]{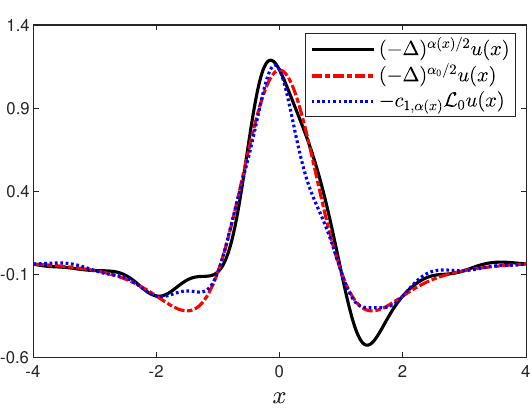}\hspace{3mm}
    \raisebox{5mm}{(b)}
    \includegraphics[height=5.19cm, width=0.4\linewidth]{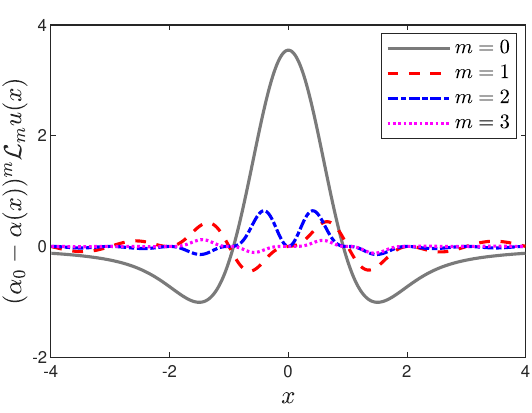}}
\centerline{
    \raisebox{5mm}{(c)}
    \includegraphics[height=5.19cm, width=0.4\linewidth]{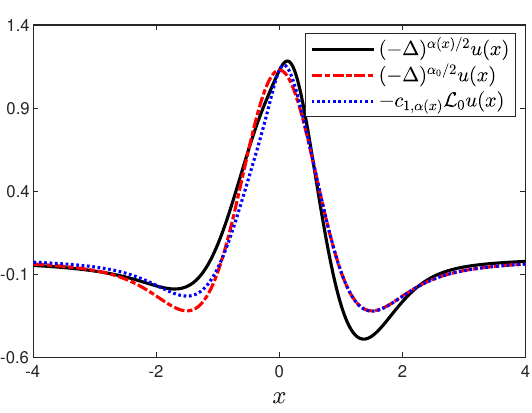}\hspace{3mm}
    \raisebox{5mm}{(d)}
    \includegraphics[height=5.19cm, width=0.4\linewidth]{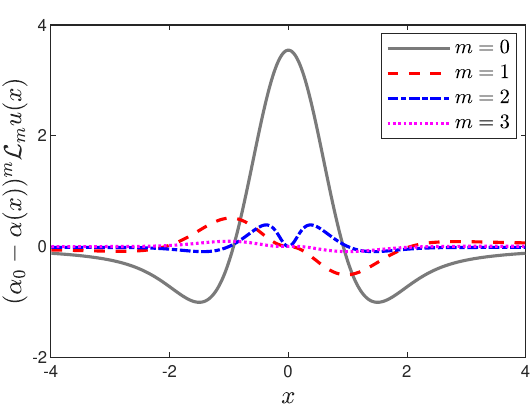}}
\vspace{-3mm}
\caption{(a) and (c): Comparison of $(-\Dt)^{\ap(x)/2}u(x)$ and $(-\Dt)^{\ap_0/2}u(x)$; (b) and (d): comparison of $(\ap_0-\ap(x))^m{\mathcal L}_mu(x)$ for different $m$, where $u(x) = e^{-x^2}$.
(a)-(b): $\ap(x) = 1-0.5\sin(\pi x)$; (c)-(d): $\ap(x) = 1+0.4{\rm tanh}(4x)$.}\label{fig:demo1}
\end{figure}
To illustrate this, we compare $(-\Delta)^{\alpha(x)/2}u(x)$ with $(-\Delta)^{\alpha_0/2}u(x)$ in Fig.~\ref{fig:demo1}(a) and (c), where $u(x)=e^{-x^2}$.
Even though their amplitudes are similar, the two functions differ significantly, with the variable-order fractional Laplacian exhibiting clear spatial heterogeneity.
This suggests that the constant-order operator can serve as a leading approximation to the variable-order fractional Laplacian, but it fails to capture the underlying heterogeneity of the problem.
Furthermore, Fig.~\ref{fig:demo1}(b) and (d) highlight the heterogeneous contributions of the terms $(\alpha_0-\alpha(x))^m \mathcal{L}_m$ for $m \ge 1$.
Among these terms, $\mathcal{L}_0$ is substantially larger than the others and therefore dominates the solution.
While the contributions of higher-order terms decay rapidly as $m$ increases, they nevertheless capture essential heterogeneous effects.
In practice, we truncate the expansion in \eqref{FL-exp} after $M$ terms, where $M$ is chosen large enough.

The formulation in \eqref{FL-exp} shows that discretizing the variable-order fractional Laplacian can be decomposed into approximating the operators ${\mathcal L}_m$ for $m \in \mathbb{N}^0$.
Each operator ${\mathcal L}_m$ can be discretized in a manner similar to \eqref{part1}–\eqref{part2}, with the weight function $w_m(\xi) := \xi^{1-\alpha_0}(\ln \xi)^m/m!$.
For completeness, we briefly outline the main steps below.
For each $m$, we have
\begin{equation}\label{disc-1d}
{\mathcal L}_m u(x) = \sum_{k = 1}^N\int_{\xi_{k-1}}^{\xi_k}\psi(x, \xi) w_{m}(\xi) \md\xi + \int_L^\infty\psi(x, \xi)w_{m}(\xi) \md\xi, \qquad \mbox{for}\ \,  m = 0, 1, \ldots.
\end{equation}
For each $k$, the integral over $[\xi_{k-1}, \xi_k]$ can be first approximated using the weighted trapezoidal rule as in \eqref{part1}, and then the integral of $w_m(\xi)$ can be evaluated analytically.
In detail, we obtain
\begin{equation}\label{int-1d-I}
\int_{\xi_{k-1}}^{\xi_k}\psi(x, \xi) w_{m}(\xi) \md\xi \approx 
       \fl{1}{2}\Big(\psi(x, \xi_k) + \psi(x, \xi_{k-1})\Big)\sum_{p=0}^{m} (-1)^{p-m}\fl{(\ln{\xi_{k}})^{p}\xi_{k}^{2-\ap_0}-(\ln{\xi_{k-1}})^{p}\xi_{k-1}^{2-\ap_0}}{(2-\ap_0)^{m+1-p}(p!)},
\end{equation}
for any $m \in {\mathbb N}^0$ and $2 \le k \le N$.
While for $k=1$, we obtain
\begin{equation}\label{int-1d0}
    \int_{\xi_0}^{\xi_1}\psi(x,\xi)w_{m}(\xi) \md\xi \approx  \psi(x,h)\sum_{p=0}^{m}(-1)^{p-m}\fl{ (\ln h)^p h^{2-\ap_0}}{(2-\ap_0)^{m+1-p}(p!)},
\end{equation} 
using the fact $\xi_1 = h$ and \eqref{phi_xi0}.
Following the similar lines as in (\ref{part2}), we can simplify the second term in (\ref{disc-1d}) as
\begin{equation}\label{int-1d-II}
\int_L^\infty\psi(x, \xi) w_{m}(\xi) \md\xi = -2u(x) \sum_{p=0}^{m}\fl{(\ln{L})^p}{\ap_0^{m+1-p}L^{\ap_0}(p!)} + \int_{L}^{\infty}\Big(g(x+\xi)+g(x-\xi)\Big)\fl{(\ln{\xi})^m}{\xi^{1+\ap_0} (m!)}\md\xi.
\end{equation}
Substituting \eqref{int-1d-I}--\eqref{int-1d-II} into \eqref{disc-1d}, using the definition of $\psi(x, \xi)$, and simplifying the calculation, we obtain the finite difference discretization of $\calL_{m}$ at $x = x_i$ as
\begin{equation}\label{Lmh}
    \calL_{m}^h u(x)\mid_{x = x_i} = 
    -\fl{1}{h^{\ap_0}}\sum_{p=0}^{m}\bigg[\bigg(2\sum_{k=1}^{N}a_{k}^{(m,p)}+\fl{2(\ln L)^p}{\apzero^{m+1-p}N^\apzero(p!)} \bigg)u_i-\sum_{\substack{j=1\\j\neq i}}^{N-1}a_{|i-j|}^{(m, p)} u_j\bigg] + b_{i}^{(m)} + r_{i}^{(m)},
\end{equation} 
where the coefficients $a_k^{(m,\,p)}$ are
\begin{equation*}
a^{(m,\, p)}_k = \fl{(-1)^{p-m}}{2 (2-\ap_0)^{m+1-p}(p!) k^2}
\begin{dcases}
2^{2-\ap_0}\big(\ln{\xi_2}\big)^{p}+\big(\ln{\xi_{1}}\big)^{p}, \hspace{4.1cm} \text{ if } k = 1, \\
{(k+1)}^{2-\ap_0}\big(\ln{\xi_{k+1}}\big)^{p}-{(k-1)}^{2-\ap_0}\big(\ln{\xi_{k-1}}\big)^{p}, \quad\text{ if } 2\le k \le N-1, \\
{N}^{2-\ap_0}\big(\ln{\xi_{N}}\big)^{p}-{(N-1)}^{2-\ap_0}\big(\ln{\xi_{N-1}}\big)^{p}, \hspace{1.25cm} \text{ if } k = N, 
\end{dcases}
\end{equation*}
for $0 \le p \le m$ and $m \in {\mathbb N}^0$.
The terms $b_{i}^{(m)}$ and $r_{i}^{(m)}$ arise from the boundary conditions, given by
\begin{align}\label{Lbm}
 b_{i}^{(m)} &= \fl{1}{h^\apzero}\sum_{p=0}^{m}\bigg(\sum_{j=i-N}^{0}a_{|i-j|}^{(m,p)}g(x_j)+\sum_{j=N}^{i+N}a_{|i-j|}^{(m,p)}g(x_j)\bigg),\\
\label{Lgm}
 r_{i}^{(m)} &=  \int_{L}^{\infty}\fl{\big(\ln{\xi}\big)^m}{\xi^{1+\ap_0}(m!)}\Big(g(x_i+\xi) + g(x_i-\xi)\Big)\md\xi. 
\end{align}
Note that the summation over $p$ in \eqref{Lmh} arises from the analytical integration of $w_m(\xi)$ on $[\xi_{k-1}, \xi_{k+1}]$, although it could alternatively be computed numerically.

For each $m \in {\mathbb N}^0$, the numerical scheme of ${\mathcal L}^h_m u(x)$ in \eqref{Lmh} can be written into a matrix-vector form, i.e., ${\mathcal L}_{m}^h\bu = A^{(m)}\bu + \bb^{(m)} + {\bf r}^{(m)}$, where the entries of $\bb^{(m)}$ and ${\bf r}^{(m)}$ are given in \eqref{Lbm} and \eqref{Lgm}, respectively.
The matrix $A^{(m)}$ is a symmetric Toeplitz matrix with its entries given by
\begin{equation*}
    A^{(m)}_{ij} = \fl{1}{h^{\ap_0}}
\begin{dcases}
-2\sum_{p=0}^{m}\bigg(\sum_{k=1}^{N}a_k^{(m,p)}+\fl{(\ln L)^p}{\apzero^{m+1-p}N^\apzero(p!)} \bigg), \hspace{0.6cm} \text{ for } i=j, \\
\sum_{p=0}^{m}a_{|i-j|}^{(m, p)}, \hspace{5.8cm} \text{ for } i \neq j.\\
\end{dcases}
\end{equation*} 
Combining \eqref{FL-exp} with \eqref{Lmh}, we obtain the {\it fast} finite difference method for the one-dimensional variable-order fractional Laplacian as
\begin{equation}\label{fFDM-1D}
    (-\Dt)^{\ap(x)/2}_{h} \bu = \sum_{m=0}^{M-1}\Big[ {{\bf c}^{(m)}}\circ\big(A^{(m)} \bu +  \bb^{(m)}\big)\Big] + {\bf r},
\end{equation}
where ${\bf c}^{(m)}$ denotes the vector with entries ${c}^{(m)}_i = -c_{1,\ap(x_i)}(a_0 - \ap(x_i))^m$ for $i = 1, \ldots, N-1$, and ${\bf a}\circ{\bf b}$ denotes the Hadamard product of vectors ${\bf a}$ and ${\bf b}$.
The vector ${\bf r}$ is obtained by combining ${\bf r}^{(m)}$ over $m \in \mathbb{N}^0$, and thus its entries are defined exactly as in \eqref{1Dscheme}.
That is, ${\bf r}$ is independent of $m$ and therefore needs to be computed only once.
Both $\mathbf{b}^{(m)}$ and $\mathbf{r}$ arise from the boundary conditions, and if homogeneous boundary conditions are considered, i.e., $g(x)=0$ for $x \in \Omega^c$, these terms vanish.
As observed in Fig.~\ref{fig:demo1}, the contributions of higher-order terms decay rapidly as $m$ increases.
Hence, we truncate the summation in \eqref{fFDM-1D} after $M$ terms, with $M$ chosen sufficiently large.
Further discussion on the selection of $M$ is provided at the end of this subsection.

For each $m \in \mathbb{N}^0$, the matrix $A^{(m)}$ in \eqref{fFDM-1D} has a symmetric Toeplitz structure, unlike the matrix from scheme \eqref{1Dscheme}.
Hence, the memory cost of $A^{(m)}$ is only ${\mathcal O}(N)$.
More importantly, the Toeplitz structure allows the matrix–vector multiplication $A^{(m)}\bu$ to be computed efficiently using FFT-based algorithms at a cost of ${\mathcal O}(2N \log(2N))$.
Consequently, the overall memory cost of the scheme \eqref{fFDM-1D} is ${\mathcal O}(MN)$, and the total computational cost is ${\mathcal O}(MN \log(2N))$, where $M \ll N$.
This is a substantial reduction compared to the cost of \eqref{1Dscheme}.
To highlight this distinction, we refer to \eqref{fFDM-1D} as the { \textit{fast}} finite difference method (fFDM), in contrast to the {\textit{direct}} finite difference method (dFDM) in \eqref{1Dscheme}.
A particularly important advantage of fFDM is its natural compatibility with parallel computing, since the contributions from different $m$ terms can be computed independently.
A detailed comparison between dFDM and fFDM is summarized in Table~\ref{tab:sec2}.

\begin{remark}[Constant-order fractional Laplacian] Our methods provide a finite difference discretization for general $0 < \alpha(x) < 2$. 
If $\ap(x)\equiv\ap$ is constant, the dFDM \eqref{1Dscheme} coincides with the fFDM \eqref{fFDM-1D} with $\ap_0=\ap$ and $M = 1$.
In this case, the scheme reduces to the finite difference discretization for the constant-order fractional Laplacian $(-\Dt)^\fl{\ap}{2}$ as introduced in \cite{Duo2018, Duo2015}.
\end{remark}

\begin{remark}[Choice of $M$]
The choice of $M$ in the scheme \eqref{fFDM-1D} generally depends on the order $\ap(x)$, mesh size $h$, and function $u(x)$.
For each $m \in {\mathbb N}^0$, since $|(\ap_0-\ap(x))^m| < 1$, the truncation is essentially controlled by the factor $|\ln \xi|^m/(m!)$, which attains its largest value at $\xi = h$.
Thus, for a prescribed tolerance $\epsilon$ and a given mesh size $h$, it is sufficient to choose $M$ such that
\begin{equation}\label{critial}
\frac{|\ln h|^M}{M!} < \epsilon.
\end{equation}
This criterion provides a practical guideline for determining $M$ in numerical implementations.
\end{remark}
Fig.~\ref{fig:demo2}(a) illustrates the behavior of $|\ln h|^m/(m!)$ as a function of $m$ for different mesh sizes $h$.
\begin{figure}[htb!]
    \centerline{
    \raisebox{5mm}{(a)}
    \includegraphics[height=5.29cm, width=0.43\linewidth]{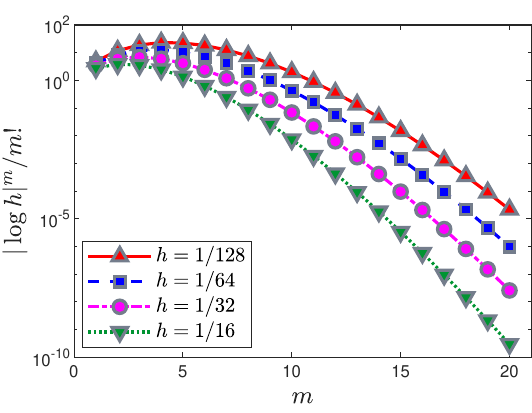}\hspace{4mm}
    \raisebox{5mm}{(b)}
    \includegraphics[height=5.29cm, width=0.43\linewidth]{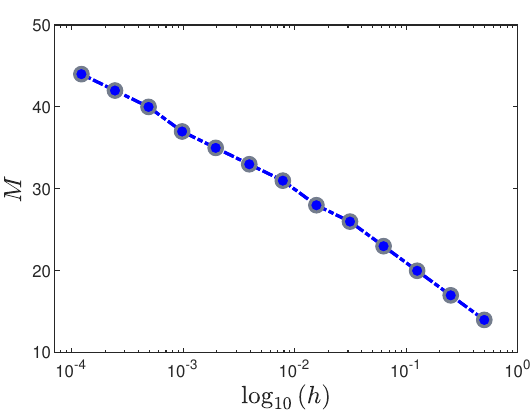}}
    \vspace{-4mm}
\caption{(a) The value of ${|\log{h}|^m}/{m!}$ decays quickly as $m$ increases.
The larger the mesh size $h$, the faster the decay.
(b) Minimal $M$ versus mesh size $h$ to satisfy ${|\log h|^M}/{M!} < \epsilon = 10^{-12}$.}
    \label{fig:demo2}
\end{figure}
It show that, for a fixed mesh size $h$, this term decays rapidly as $m$ increases, which justifies truncating the summation in \eqref{fFDM-1D}.
Fig.~\ref{fig:demo2}(b) presents the minimal $M$ required for each mesh size $h$ when $\epsilon = 10^{-12}$, confirming that the criterion \eqref{critial} offers an effective and practical guideline for determining $M$ in computations.
Moreover, it shows that as the mesh size $h$ decreases, more terms are required in the scheme \eqref{fFDM-1D}.

\section{Generalization to higher dimensions}
\label{section3}
\setcounter{equation}{0}

The spatial dependence of $\ap(\bx)$ significantly increases the difficulty of approximating the variable-order fractional Laplacian in higher ($d > 1$) dimensions.
For $d = 1$, both dFDM and fFDM are applicable, although dFDM incurs substantially higher computational and memory costs.
In contrast, for $d > 1$, the direct discretization of the variable-order fractional Laplacian quickly becomes infeasible as $N$ grows, since it requires both memory and computational costs of $\mathcal{O}(N^2)$; see Table \ref{tab:2d-cpu}.
In this section, we therefore focus on extending the fast finite difference method \eqref{fFDM-1D} to higher dimensions.

Let the $d$-dimensional domain $\Og=(a_1,b_1)\,\times\,\cdots\,\times\,(a_d,b_d)$.
Denote $\bx = \big(x^{(1)}, x^{(2)}, \ldots, x^{(d)}\big)^T$ and $\bxi=\big(\xi^{(1)},\xi^{(2)}, \ldots, \xi^{(d)}\big)^T$ with $\xi^{(i)} = |x^{(i)}-y^{(i)}|$ for $1 \le i \le d$.
Denote the index set
\begin{equation*}
    \aleph_{J} = \big\{(m_1, m_2,\,\ldots,\, m_d)\, |\, 0 \leq m_i \leq J, \ \text{for } 1\leq i\leq d\big\}, 
\end{equation*}  
for $J \in {\mathbb N}$.
Similar to the one-dimensional case, the $d$-dimensional variable-order fractional Laplacian can be formulated into a weighted integral, i.e.,
\begin{equation}\label{newLd}
    (-\Dt)^{\ap(\bx)/2} u(\bx) = -c_{d,\ap(\bx)}\int_{{\mathbb R}_+^d} \psi_{d}(\bx, \bxi)\, |\bxi|^{2-[d+\ap(\bx)]} \md \bxi,
\end{equation}
where ${\mathbb R}_+^d = [0, \infty)^d$, and the functions
\begin{equation}\label{split-psi}
    \psi_{d}(\bx,\bxi) = \fl{1}{|\bxi|^2} \Big(\sum_{\bn\in \aleph_1}u(\bx+(-1)^{\bn}\circ\bxi)-2^du(\bx)\Big). 
\end{equation}
The vector $(-1)^{\bn}=\big((-1)^{n_1}, (-1)^{n_2}, \ldots ,(-1)^{n_d}\big)^T$.
Following the same approach as in \eqref{taylor}, the weight function \eqref{newLd} can be rewritten to separate its dependence on $\bx$ and $\bxi$, yielding
\begin{equation}
    |\bxi|^{2-[d+\ap(\bx)]} = |\bxi|^{2-(d+\ap_0)}\sum_{m=0}^{\infty}\big(\ap_0-\ap(\bx)\big)^m \fl{ (\ln{|\bxi|})^m}{m!}.
\end{equation}
Consequently, the $d$-dimensional variable-order fractional Laplacian in \eqref{newLd} can be expressed as
\begin{equation}\label{vofl-taylor-d}
    (-\Dt)^{\ap(\bx)/2}u(\bx) = -c_{d,\ap(\bx)} \sum_{m=0}^{\infty}\big(\ap_0-\ap(\bx)\big)^m{\mathcal L}_{d,m} u(\bx),
\end{equation}
where the $d$-dimensional nonlocal operator
\begin{equation*}
    {\mathcal L}_{d,m} u(\bx) = \int_{{\mathbb R}_+^d} \psi_{d}(\bx,\bxi)\,w_{d, m}(\bxi) \md \bxi,\qquad\mbox{with}\ \, w_{d, m}(\bxi) = \fl{1}{m!}\,|\bxi|^{2-(d+\ap_0)}(\ln{|\bxi|})^m,
\end{equation*}
for $m \in {\mathbb N}^0$.
Similar to the one-dimensional case, the leading term ${\mathcal L}_{d, 0}$ approximates the constant-order fractional Laplacian, i.e., $-c_{d,\ap_0}\mathcal{L}_{d,0} u(\bx) = (-\Delta)^{\ap_0/2} u(\bx)$.

Now we focus on the discretization of ${\mathcal L}_{d,m}$.
Let $L = \max_{1 \leq i \leq d}|b_i - a_i|$ and set the mesh size $h = L/J$ with integer $J > 0$.
The grid points are then defined as $\bxi_{\bk}=\big(\xi^{(1)}_{k_1},\xi^{(2)}_{k_2}, \,\ldots,\, \xi^{(d)}_{k_d}\big)$ with $\xi^{(i)}_{k_i}=k_{i}h$ for $\bk\in \aleph_J$.
For each ${\bf k} \in\ \aleph_{J-1}$, we define the element ${\mathcal E}_{\bk}=[k_1 h, (k_1+1)h]\,\times\,[k_2 h, (k_2+1)h]\,\times \, \cdots\, \times[k_d h, (k_d+1)h]$.
Then we can write $\calL_{d,m}u(\bx)$ as
\begin{equation}\label{Ldm}
    \calL_{d,m} u(\bx) =  \sum_{\bk\in \aleph_{J-1}}\int_{{\mathcal E}_{\bk}} \psi_{d}(\bx,\bxi) w_{d, m}(\bxi)\md\bxi +\int_{D_d} \psi_{d}(\bx,\bxi) w_{d, m}(\bxi)\md\bxi,\quad \ \mbox{for} \ \, m \in {\mathbb N}^0,
\end{equation}
where $D_{d}=\mathR_+^d\backslash[0,L]^d$.
It is easy to verify that if $\bx \in \Omega$ and $\bxi \in D_d$, then $\bx + (-1)^{\bn} \circ \bxi \in \Omega^c$ for all $\bn \in \aleph_1$.
Hence, by the boundary conditions we have $u(\bx + (-1)^{\bn} \circ \bxi) = g(\bx + (-1)^{\bn} \circ \bxi)$ for $\bn \in \aleph_1$, and consequently the integral
\begin{equation}\label{int-II}
    \int_{D_d} \psi_{d}(\bx,\bxi) w_{d, m}(\bxi)\md\bxi = -2^du(\bx)\int_{D_d}\fl{(\ln{|\bxi|})^m}{(m!)|\bxi|^{d+\ap_0}}\md\bxi + \int_{D_d}\fl{(\ln{|\bxi|})^m}{(m!)|\bxi|^{d+\ap_0}}\sum_{\bn\in\aleph_1} g(\bx+(-1)^{\bn}\circ\bxi)\md\bxi,
\end{equation}
Note that both integrals at the right hand side of \eqref{int-II} are nonsingular and can therefore be evaluated accurately using standard quadrature rules.

For the integrals over element ${\mathcal E}_{\bk}$ (for $\bk\in\aleph_{J-1}$), we can directly apply the $d$-dimensional weighted trapezoidal rule and obtain:
\begin{equation}\label{int-I-k}
    \int_{{\mathcal E}_{\bk}}\psi_{d}(\bx,\bxi) w_{d,m}(\bxi)\md\bxi \approx \fl{1}{2^d}\Big(\sum_{\bn\in \aleph_1} \psi_{d}(\bx,\bxi_{\bk+\bn}) \Big)\int_{{\mathcal E}_{\bk}} w_{d, m}(\bxi)\md\bxi, \quad \ \mbox{for} \ \bk\in\aleph_{J-1}.
\end{equation}
Note that when $|\bk| = |\bn| = 0$, the term $\psi_{d}(\bx,\bxi_{\bzero})$ is defined in a limit sense \cite{Duo2019-FDM}, i.e.,
\begin{equation*}\label{int-I-lim0}
    \psi_d(\bx, \bzero) : = \lim_{\bxi\to\bzero}\psi_{d}(\bx,\bxi) = \Big( \fl{2^d}{d}-1 \Big)\sum_{|\bn|=1}\psi_{d}(\bx,\bxi_{\bn}) - \sum_{\substack{\bn\in \aleph_1, |\bn|\neq 0, 1}}\psi_{d}(\bx,\bxi_{\bn}).
\end{equation*}
Hence, the integral over ${\mathcal E}_{\bzero}$ reduces to
\begin{equation}\label{int-I-0}
    \int_{{\mathcal E}_{\bzero}} \psi_{d}(\bx,\bxi) w_{d, m}(\bxi)\md\bxi \,\approx\, \fl{1}{d}\Big( 
\sum_{|\bn| = 1} \psi_{d}(\bx,{\bxi_{\bf n}}) \Big)\int_{{\mathcal E}_{\bzero}}w_{d, m}(\bxi)\md\bxi,
\end{equation}
Combining \eqref{Ldm}--\eqref{int-I-0} immediately gives the approximation of $\calL_{d, m}$, which we omit here for brevity.

Furthermore, combining \eqref{vofl-taylor-d} with \eqref{Ldm}, we obtain the following approximation of the $d$-dimensional variable-order fractional Laplacian at point $\bx = \bx_{\bf i}$:
\bea\label{discrete-d}
    &&(-\Dt)^{\ap(\bx)/2}_{h}u(\bx) \mid_{\bx = \bx_{\bf i}}=-c_{d,\ap(\bx_{\bf i})}\sum_{m=0}^{M-1}\big(\ap_0-\ap(\bx_{\bf i})\big)^m\bigg[\fl{1}{d}\sum_{\substack{\bn\in\aleph_{1} \\ |\bn|=1}} \psi_{d}(\bx_{\bf i},\bxi_{\bn})\int_{{\mathcal E}_{\bzero}}w_{d,m}(\bxi)\md\bxi \nn\\
    &&\hspace{1.6cm}+ \fl{1}{2^d}\sum_{\substack{\bk\in\aleph_{J-1}\backslash{\{\bzero\}} \\ \bn\in\aleph_1}} \psi_{d}(\bx_{\bf i},\bxi_{\bk+\bn})\int_{{\mathcal E}_{\bk}}w_{d,m}(\bxi)\md\bxi - \bigg(2^d \int_{D_d}\fl{(\ln{|\bxi|})^m}{(m!)|\bxi|^{d+\ap_0}}\md\bxi\bigg) u(\bx_{\bf i}) \bigg] \nn \\
    &&\hspace{1.6cm}-c_{d,\ap(\bx_{\bf i})}\int_{D_d}\fl{1}{|\bxi|^{d+\ap(\bx_{\bf i})}}\sum_{\bn\in\aleph_1} g\big(\bx_{\bf i}+(-1)^{\bn}\circ\bxi\big)\md\bxi, \qquad \mbox{for} \ \ \bx_{\bf i} \in \Og,
\eea 
with a sufficiently large $M > 0$.
The last line in \eqref{discrete-d} is obtained from summing the second term in \eqref{Ldm} over all $m \in \mathbb{N}^0$.
Similar to the one-dimensional case, our method for the $d$-dimensional variable-order fractional Laplacian in \eqref{discrete-d} can be written in matrix–vector form as
\begin{equation}\label{scheme2D}
    (-\Dt)^{\ap(\bx)/2}_{h} \bu = \sum_{m=0}^{M-1}\Big[ {\bf c}^{(m)} \circ \big(A^{(m)}\bu + \bb^{(m)} \big) \Big] + {\bf r},
\end{equation}
where $\bu$, ${\bf c}^{(m)}$, ${\bf b}^{(m)}$, and ${\bf r}$ are block vectors whose structure depends on the spatial dimension $d$.
The matrices $A^{(m)}$ have symmetric multilevel Toeplitz structure, so the matrix–vector products $A^{(m)}\bu$ can be computed efficiently using multidimensional FFTs.
Moreover, since each term in the summation of \eqref{scheme2D} is independent, the computation is naturally parallelizable.
The parallel efficiency will be examined in Section~\ref{section4}.

For the convenience of the reader, we present the detailed forms of the scheme \eqref{scheme2D} for $d=2$ and $d=3$ in Subsections~\ref{section3-1} and \ref{section3-2}, respectively.
\subsection{Two-dimensional (2D) cases}
\label{section3-1}

For the two-dimensional ($d = 2$) case, we let $x = x^{(1)}$ and $y = x^{(2)}$.
Without loss of generality, we set $N_x = J$ and choose $N_y$ as the smallest integer such that $a_2 + N_y h \ge b_2$.
Define the grid points $x_i = a_1 + i h$ and $y_j = a_2 + j h$ for $0 \le i \le N_x$ and $0 \le j \le N_y$.
Let $\bx_{ij} = (x_i, y_j)$, $\bxi_{ij} = (\xi_i^{(1)}, \xi_j^{(2)})$, and $u_{ij} = u(x_i, y_j)$.
Denote the index sets
\begin{align*}
\Omegah &= \{(i,j)\,\big|\, \, 1\leq i\leq N_x-1,\, 1\leq j\leq N_y-1\}, \\
\Omegatwo &= \{(p,q)\,\big|\, \ |p - i| \leq J, \, |q - j| \le J,  \ \mbox{but} \ (p, q)\notin \Og_h \}
\end{align*}
Substituting $d=2$ into \eqref{discrete-d}, using the definition of $\psi_2$ and simplying the calculations, we obtain the finite difference approximation of the 2D variable-order fractional Laplacian as:
\begin{equation}\label{2D-scheme}
    (-\Dt)_h^{\ap_{ij}/2}u_{ij} = -c_{2,\ap_{ij}}\sum_{m=0}^{M-1}\big(\ap_0 -\ap_{ij}\big)^m\bigg[ a_{00}^{(m)}u_{ij} + \sum_{(p,q)\in\Omegah\backslash (i,j)} a_{|i-p||j-q|}^{(m)} u_{pq}+ b^{(m)}_{ij}\bigg] + r_{ij},
\end{equation}
for any $(i,j)\in\Omegah$, where we denote $\ap_{ij} = \ap(x_i, y_j)$, and the terms
\beas
    b^{(m)}_{ij} = \sum_{(p,q)\in\Omegatwo} a^{(m)}_{|i-p||j-q|} u_{pq}, \qquad
    r_{ij} = -c_{2,\ap_{ij}}\sum_{\bn\in\aleph_1} 
    \int_{D_2}\fl{g\big(\bx_{ij}+(-1)^{\bn}\circ\bxi\big)}{|\bxi|^{2+\ap_{ij}}} \md\bxi.
\eeas
The coefficients $a_{pq}^{(m)}$ are given by
\begin{equation*}
a^{(m)}_{pq} = 
\begin{cases}    &\displaystyle\fl{2^{\sigma(p,q)}}{4|\bxi_{pq}|^2} \bigg( \int_{{\mathcal T}_{pq}}\fl{(\ln{|\bxi|})^m}{|\bxi|^{\ap_0}(m!)}\md\bxi + \gamma_{pq}\int_{\calE_{\bf 0}}\fl{(\ln{|\bxi|})^m}{|\bxi|^{\ap_0}(m!)}\md\bxi\bigg), \hspace{1.6cm} \text{ if } p+q>0, \\[10pt]
    &\displaystyle -2\sum_{k=1}^{J}\big(a^{(m)}_{k0}+a^{(m)}_{0k}\big) -4\sum_{k,s=1}^{J }a^{(m)}_{ks} - 4\int_{D_2}\fl{(\ln{|\bxi|})^m}{|\bxi|^{2+\ap_0}(m!)}\md\bxi, \quad \text{ if } p=q=0,
\end{cases}
\end{equation*}
where $\sigma(p,q)$ denotes the number of zeros among the integers $p$ and $q$, ${\mathcal T}_{pq} = \big([(p-1)h, (p+1)h]\times[(q-1)h, (q+1)h]\big)\cap[0,L]^2$, the constant $\gamma_{01}= \gamma_{01} = -\gamma_{11} = 1$, and $\gamma_{pq} = 0$ for other $p, q>0$.

Denote the vector $\bu_{x,j} = (u_{1,j}, u_{2,j}, \ldots, u_{N_x-1,j})$ for $1 \le j \le N_y-1$, and assemble the block vector $\bu = \big(\bu_{x,1}, \bu_{x,2}, \ldots, \bu_{x,N_y-1}\big)^T$.
We define the block vectors $\bap$, ${\bf b}^{(m)}$, and ${\bf r}$ in the same manner.
Then the scheme in \eqref{2D-scheme} can be expressed in the matrix-vector form as in \eqref{scheme2D}, where the matrix is given by
\begin{equation*}
    A^{(m)} = \begin{pmatrix}
    A_{x,0}^{(m)} &A_{x,1}^{(m)} &\cdots &A_{x,N_{y}-3}^{(m)} & A_{x,N_{y}-2}^{(m)} \\
    A_{x,1}^{(m)} &A_{x,0}^{(m)} &A_{x,1}^{(m)} &\cdots &A_{x,N_{y}-3}^{(m)} \\
    \vdots  &\ddots   &\ddots   &\ddots &\vdots \\
    A_{x,N_{y}-3}^{(m)} &\cdots &A_{x,1}^{(m)} & A_{x,0}^{(m)} &A_{x,1}^{(m)} \\
    A_{x,N_{y}-2}^{(m)} & A_{x,N_{y}-3}^{(m)} & \cdots & A_{x,1}^{(m)} & A_{x,0}^{(m)}
    \end{pmatrix}_{N\times N}
\end{equation*}
with $N=(N_{x}-1)(N_{y}-1)$ being the total number of spatial points, and each block $A_{x,j}^{(m)}$ for $0\leq j\leq N_{y}-2$ is a symmetric Toeplitz matrix with its entries defined as
\begin{equation}
    \big(A_{x,j}^{(m)}\big)_{i,k} = \big(A_{x,j}^{(m)}\big)_{i+1,k+1} = a^{(m)}_{|i-k|j}, \qquad 1\leq i,k\leq N_{x}-1, \ \ m \in {\mathbb N}^0.
\end{equation}
In contrast to the one-dimensional case, the coefficients $a^{(m)}_{pq}$, defined through integrals of the weighted function $w_{d,m}(\bxi)$, cannot be evaluated analytically.
Nevertheless, these integrals are nonsingular and can be accurately computed using standard numerical quadrature rules.
The coefficients can be precomputed and reused, especially for time-dependent problems.

\subsection{Three-dimensional (3D) cases}
\label{section3-2}

For the three-dimensional $(d=3)$ case, we let $x=x^{(1)}$, $y=x^{(2)}$, $z=x^{(3)}$.
Similarly, we set $N_x = J$ and choose $N_y$ and $N_z$ as the smallest integers such that $a_2 + N_y h \ge b_2$ and $a_3 + N_z h \ge b_3$.
Define the grid points $x_i = a_1 + i h$, $y_j = a_2 + j h$, and $z_k = a_3 + k h$ for $0 \le i \le N_x$, $0 \le j \le N_y$, and $0 \le k \le N_z$.
Let $\bx_{ijk} = (x_i, y_j, z_k)$, $\bxi_{ijk} = (\xi_i^{(1)}, \, \xi_j^{(2)}, \, \xi_k^{(3)})$, $u_{ijk} = u(x_i, y_j, z_k)$, and $\ap_{ijk} = \ap(x_i, y_j, z_k)$.
Define the index sets
\begin{align*}
    \Omegah &= \big\{(i,j,k)\, \big|\, 1\leq i\leq N_x-1, \,1\leq j\leq N_y-1, \,1\leq k\leq N_z-1 \big\}, \\
    \Omegathree &= \big\{(p,q,s)\,\big|\,   |p-i| \le J,\,  |q-j| \le J, \,  |s-k| \le J, \, \mbox{but} \  (p, q, s)\notin \Og_h\big\}.
\end{align*}
Substituting $d=3$ into \eqref{discrete-d} and using the definition of $\psi_3$ in \eqref{split-psi}, we obtain the fast finite difference approximation of the 3D variable-order fractional Laplacian at point $\bx = \bx_{ijk}$:
\bea\label{3D-scheme}
    &&(-\Dt)_h^{\ap_{ijk}/2}u_{ijk} = -c_{3,\ap_{ijk}}\sum_{m=0}^{M-1}\big(\ap_0 -\ap_{ijk}\big)^m\bigg[ a_{000}^{(m)}u_{ijk} \nonumber\\
    &&\hspace{4.5cm}+ \sum_{(p,q,s)\in\Omegah\backslash(i,j,k)} a_{|i-p||j-q||k-s|}^{(m)} u_{pqs}+ b^{(m)}_{ijk}\bigg] + r_{ijk},\qquad\qquad
\eea
for any $(i, j, k) \in \Og_h$, where
\begin{equation*}
    b^{(m)}_{ijk} = \sum_{(p,q,s)\in\Omegathree} a^{(m)}_{|i-p||j-q||k-s|} u_{pqs}, \quad \
    r_{ijk} = -c_{3,\ap_{ijk}}\sum_{\bn\in\aleph_1}\int_{D_3}\fl{g\big(\bx_{ijk}+(-1)^{\bn}\circ\bxi\big)}{|\bxi|^{3+\ap_{ijk}}} \md\bxi. 
\end{equation*}
For $p + q + s > 0$, the coefficient $a_{pqs}^{(m)}$ is given by
\beas
a^{(m)}_{pqs} = \fl{2^{\sigma(p,q,s)}}{8|\bxi_{pqs}|^2} \bigg( \int_{{\mathcal T}_{pqs}}\fl{(\ln{|\bxi|})^m}{|\bxi|^{1+\ap_0}(m!)}\md\bxi + \gamma_{pqs}\int_{\calE_{\bf 0}}\fl{(\ln{|\bxi|})^m}{|\bxi|^{1+\ap_0}(m!)} \md\bxi\bigg)
\eeas
where $\sigma(p, q, s)$ denotes the number of zeros among the integers $p$, $q$, and $s$, and ${\mathcal T}_{pqs} = \big([(p-1)h,(p+1)h]\times[(q-1)h,(q+1)h]\times[(s-1)h,(s+1)h]\big)\cap[0,L]^3$.
The constant $\gamma_{001} = \gamma_{010} = \gamma_{100} = 5/3$, $\gamma_{101} = \gamma_{110} = \gamma_{011} = \gamma_{111} = -1$, and $\gamma_{pqs}=0$ for other $p, q, s >0$.
While
\begin{equation*}
a^{(m)}_{000} =  -2\sum_{k=1}^{J}\big(a_{k00}+a_{0k0}+a_{00k}\big) -4\sum_{k,s=1}^{J }\big(a_{ks0}+a_{k0s}+a_{0ks}\big)-8\sum_{k,s,n=1}^{J} a_{ksn}-8 \int_{D_3}\fl{(\ln{|\bxi|})^m}{|\bxi|^{3+\ap_0}(m!)}\md\bxi.
\end{equation*}
Denote the block vector $\bu = \left(\bu_{x,y,1},\bu_{x,y,2},\ldots,\bu_{x,y,N_{z}-1}\right)^{T}$ with $\bu_{x,y,k}=\left(\bu_{x,1, k},\bu_{x,2, k},\ldots,\bu_{x,(N_{y}-1), k}\right)$ and $\bu_{x,j, k}=\left(u_{1jk},\, u_{2jk},\, \ldots,\, u_{(N_{x}-1)jk}\right)$.
In the same manner, we define the block vectors $\bap$, ${\bf b}^{(m)}$, and ${\bf r}$.
Then the scheme in \eqref{3D-scheme} can be written in the matrix-vector form in \eqref{scheme2D} with
\begin{equation*}
    A^{(m)} = \begin{pmatrix}
    A_{x,y,0}^{(m)} &A_{x,y,1}^{(m)} &\cdots &A_{x,y,N_{z}-3}^{(m)} & A_{x,y,N_{z}-2}^{(m)} \\
    A_{x,y,1}^{(m)} &A_{x,y,0}^{(m)} &A_{x,y,1}^{(m)} &\cdots &A_{x,y,N_{z}-3}^{(m)} \\
    \vdots  &\ddots   &\ddots   &\ddots &\vdots \\
    A_{x,y,N_{z}-3}^{(m)} &\cdots &A_{x,y,1}^{(m)} & A_{x,y,0}^{(m)} &A_{x,y,1}^{(m)} \\
    A_{x,y,N_{z}-2}^{(m)} & A_{x,y,N_{z}-3}^{(m)} & \cdots & A_{x,y,1}^{(m)} & A_{x,y,0}^{(m)}
    \end{pmatrix}_{N\times N}
\end{equation*}
for $m \in {\mathbb N}^0$, where $N = (N_x-1)(N_y-1)(N_z -1)$.
For $0 \le k \le N_{z}-2$, the block matrix
\begin{equation*}
    A_{x,y,k}^{(m)} = \begin{pmatrix}
    A_{x,0,k}^{(m)} & A_{x,1, k}^{(m)} &\cdots &A_{x,(N_{y}-3), k}^{(m)} & A_{x,(N_{y}-2), k}^{(m)} \\
    A_{x,1,k}^{(m)} &A_{x,0,k}^{(m)} &A_{x,1,k}^{(m)} &\cdots &A_{x,(N_{y}-3),k}^{(m)} \\
    \vdots  &\ddots   &\ddots   &\ddots &\vdots \\
    A_{x,(N_{y}-3),k}^{(m)} &\cdots &A_{x,1,k}^{(m)} & A_{x,0,k}^{(m)} &A_{x,1,k}^{(m)} \\
    A_{x,(N_{y}-2),k}^{(m)} & A_{x,(N_{y}-3),k}^{(m)} & \cdots & A_{x,1,k}^{(m)} & A_{x,0,k}^{(m)}
    \end{pmatrix},
\end{equation*}
with each block $A_{x,j,k}^{(m)}$ is a symmetric Toeplitz matrix, and its entry is defined as
\begin{equation*}
    \left(A_{x,j,k}^{(m)}\right)_{is} = \left(A_{x,j,k}^{(m)}\right)_{i+1,s+1} = a_{|i-s|jk}^{(m)}, \quad 1\leq i,s \leq N_x-1.
\end{equation*}
Similar to the 2D case, the matrix-vector product $A^{(m)}{\bu}$ can be efficiently computed via the 3D FFTs, and the computational cost is $\mathcal{O}\big(2N\log(2N)\big)$, and the storage cost is $\mathcal{O}(N)$.

\medskip
In summary, our $d$-dimensional fast finite difference scheme in \eqref{scheme2D} is naturally parallelizable, as the $M$ terms can be computed independently.
The detailed algorithm is given in Algorithm~\ref{alg2}, and the total computational and memory costs are summarized in Table~\ref{tab:sec2}.
\begin{algorithm}[htb!]
\caption{Parallel implementation of fast FDM in \eqref{scheme2D} }\label{alg2}
\KwIn{Grid points $\mathbf{x}$, grid  function $\mathbf{u}$, number $M$}
\KwOut{Approximate grid function $(-\Dt)_h^{\ap(\bx)/2}{\bf u}$}
\vspace{1mm}
1: Compute vector ${\bf c}_{d,\bap}$ and ${\bf r}$\\
2: \textbf{parallel for} $m = 0$ to $M-1$ \textbf{do} \\
3:\qquad Compute vector ${\bf c}^{(m)}$, ${\bf b}^{(m)}$, and $\{a_{\bf k}^{(m)}\}$ for ${\bf k} \in \aleph_{J}$ \\
4:\qquad Compute $A^{(m)}\mathbf{u}$ via $d$-dimensional FFTs \\
5:\qquad Compute ${\bf c}^{(m)}\circ [A^{(m)}{\bf u} + {\bf b}^{(m)}]$\\
6: \textbf{end for} \\
7: \Return approximation $(-\Dt)_h^{\ap(\bx)/2}{\bf u}$ as defined in \eqref{scheme2D} 
\end{algorithm}

\begin{remark}[Higher-dimensional accuracy]\label{thm2}
Theorem~\ref{thm} establishes the error estimate in one dimension.
The higher-dimensional analysis can be developed along similar lines but is more involved and is omitted here for brevity.
The numerical experiments in Section~\ref{section4-1} show second-order convergence for smooth functions in higher dimensions.
\end{remark} 

\section{Numerical experiments}
\label{section4}
\setcounter{equation}{0}

In this section, we perform numerical experiments to evaluate the accuracy and efficiency of our fast finite difference method and to study both the steady-state and dynamic behavior of diffusion problems in heterogeneous media.
Despite the growing recognition of variable-order nonlocal models for capturing diffusion in complex media \cite{Lenzi2016}, numerical studies still remain limited, mainly due to the significant memory and computational challenges posed by the spatial varying $\ap(\bx)$.
Our goal is to validate our methods through extensive experiments and gain deeper insights into the underlying operators so as to advance this emerging field.

\subsection{Accuracy verification}
\label{section4-1}

We test the numerical accuracy of our method in approximating the variable-order fractional Laplacian for functions $u$ with different regularity and verify our analytical results.
Here, we denote the error function $e_\Dt^h(\bx)$ in approximating the fractional Laplacian as in \eqref{error}.
In addition, we examine the effect of $M$-term truncation in the fast finite difference method.

\begin{example}\label{example4-1}
Consider the one-dimensional compactly supported function
\begin{equation}\label{ex1-u}
    u(x)=\bigl[(a^2-x^2)_+\bigr]^q,
    \qquad x\in\mathbb{R},
\end{equation}
where constants $a,q>0$.
The exact solution of $(-\Dt)^{\ap(x)/2}u(x)$ on $(-a, a)$ is given by \cite{Wu2024}:
\beas
(-\Dt)^{\ap(x)/2}u(x) = \fl{2^{\ap(x)}\Gamma{\big(q+1\big)}\Gamma{\big((1+\ap(x))/2\big)}a^{2q-\ap(x)}}{\sqrt{\pi}\Gamma{\big(q+1-\ap(x)/2\big)}} {}_{2}F_{1}\Big( \fl{1+\ap(x)}{2},-q+\fl{\ap(x)}{2};\fl{1}{2};\fl{x^2}{a^2}\Big),
\eeas 
where ${}_{2}F_{1}$ represents the Gauss hypergeometric function.
The function $u$ has two potential low-regularity points at $\hat {x}_{1, 2}=\pm a$, with local regularity exponents $\sigma_1=\sigma_2=q$.
\end{example}
In Fig.~\ref{fig:thm1}, we study the numerical errors $\|e_{\Delta}^h\|_{\ell^\infty(\Omega)}$ for $a=2$ and $\alpha(x)=0.3-0.1x$ on two different domains.
On $\Omega=(-2,2)$, the low-regularity points lie on the boundary, and the smallest regularity gap is $q-\alpha(-2)=q-0.5$, which determines the convergence rate when $0.5 < q < 2.5$.
On $\Omega=(-1,1)$, the low-regularity points lie outside the physical domain, and the convergence order is $q + 1$ if $q < 1$.

\begin{figure}[htb!]
    \centerline{
    \raisebox{5mm}{(a)}\includegraphics[height=5.19cm, width=0.43\linewidth]{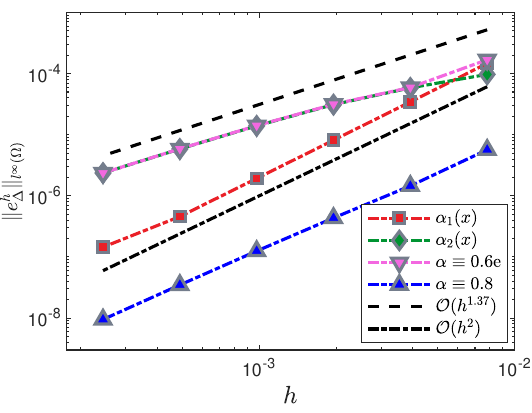}
    \raisebox{5mm}{(b)}\includegraphics[height=5.19cm, width=0.43\linewidth]{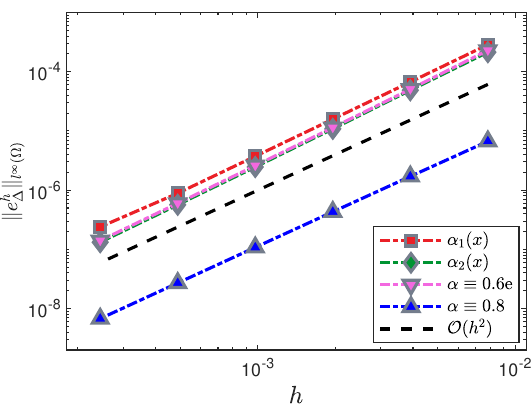}}
    \vspace{-4mm}
\caption{Numerical errors in approximating $(-\Dt)^{\ap(x)/2}u(x)$, where $\alpha_1(x) = 1.8 - |x|$, $\alpha_2(x) = 0.6e^{-x}$, and $u$ is defined in \eqref{ex1-u} with $a = 1$, and $q=3$ (a) and $q=4$ (b).
Reference order lines indicate the convergence rate predicted by \eqref{thm1-ineq2}.}
    \label{Fig2-0-0}
\end{figure}
In the following, we fix $\Omega=(-1,1)$ and take $u$ as defined in \eqref{ex1-u} with $a=1$.
The two low-regularity points $\hat x_1=1$ and $\hat x_2=-1$ lie on the boundary, with $\sigma_1=\sigma_2=q$.
Fig.~\ref{Fig2-0-0} compares the numerical errors for the variable-order and constant-order cases.
For both $\ap_1(x)$ and $\ap_2(x)$, the smallest regularity gap occurs at $\hat x_2=-1$, and thus gap $q-\ap_j(-1)$ governs the convergence rate, subject to the second-order threshold.
To compare, we choose the corresponding constant order as $\alpha=\ap_j(-1)$, $j=1,2$, so that both cases have the same smallest gap.
It shows that for sufficiently smooth functions (e.g., Fig.~\ref{Fig2-0-0} (b) for case of $q \ge 4$), our method has second-order accuracy for all $\ap$, though the errors in the variable-order cases may be larger than those in the constant-order cases.
These results further confirm the theoretical results in Theorem~\ref{thm} and demonstrate that the accuracy is determined by the interplay between the solution regularity and the fractional order, rather than by the maximum value of $\alpha(x)$.

The above results are obtained using the dFDM in \eqref{1Dscheme}.
Our studies show that the fFDM in \eqref{fFDM-1D}, with a sufficiently large $M$, can achieve the same accuracy.
Next, we study the effect of $M$-term truncation in the fFDM by comparing its results with those of dFDM.
Fig. \ref{Figure-Error} compares the numerical errors of these two approaches for various $h$ and $M$.
The fFDM can be viewed as a truncated approximation of dFDM, and its truncation error depends on $M$.
\begin{figure}[ht!]
\centerline{
    \raisebox{5mm}{(a)}
    \includegraphics[height=5.19cm, width=0.42\linewidth]{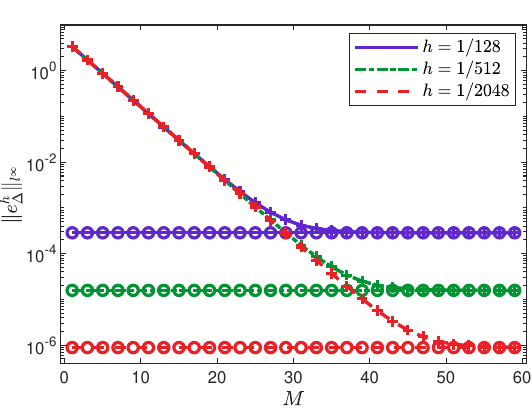}\hspace{4mm}
    \raisebox{5mm}{(b)}
    \includegraphics[height=5.19cm, width=0.42\linewidth]{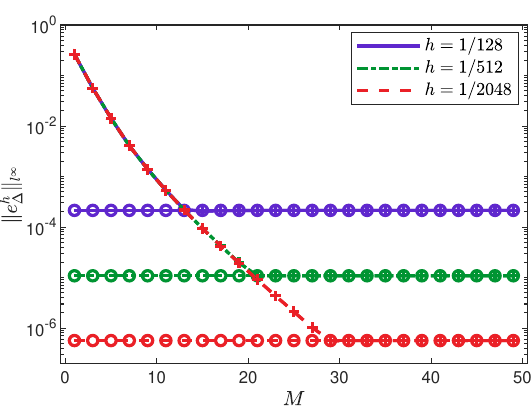}}
    \vspace{-3mm}
\caption{Comparison of numerical errors of fFDM (symbol `$+$') and dFDM (symbol `$\circ$') in approximating $(-\Dt)^{\ap(x)/2}u$, where $u$ is defined in \eqref{ex1-u} with $q = 4$.
(a) $\ap(x) = 1.8-|x|$; (b) $\ap(x) = 0.6e^{-x}$.}\label{Figure-Error}
\end{figure}
We find that numerical errors of the fFDM decrease exponentially as $M$ increases, eventually converging to those of dFDM.
This is because, for small $M$, the truncation error in fFDM dominates the overall numerical error.
However, when $M$ is large enough, the discretization errors of numerical methods become dominant, and the truncation errors can be neglected.
The threshold value of $M$ depends on the mesh size $h$ and $\ap(x)$; see also Fig. \ref{fig:demo2}.
These observations demonstrate the effectiveness of our fFDM in accurately computing the variable-order fractional Laplacian.

\medskip
\begin{example}
To further our study, we approximate $(-\Delta)^{\alpha(x)/2}u(x)$ on domain $\Omega=(-1,1)$ with
\begin{equation}\label{ex2-u-new}
    u(x) = \big[(0.25^2-x^2)_+\big]^{q_1}
    + \big[(1-x^2)_+\big]^{q_2},
    \qquad x\in\mathbb{R}, \quad q_1,q_2>0,
\end{equation}
Its exact solution can be found similarly to that in Example~\ref{example4-1}.
The function $u$ has four potential low-regularity points: $\hat x_{1,2}=\pm0.25$ in the interior and $\hat x_{3,4}=\pm1$ on the boundary, with local regularity exponents $\sigma_{1,2} = q_1$ and $\sigma_{3, 4} = q_2$, respectively.
\end{example}

Compared with Example~\ref{example4-1}, the convergence rate in this case is governed by the smallest of the regularity gaps $q_1-\ap(\pm0.25)$ and $q_2-\ap(\pm1)$, subject to the second-order threshold.
Table~\ref{tab:example4-2} reports the numerical errors and convergence rates for several choices of $(q_1,q_2)$ with $\ap(x)=0.6+0.4x$.
The last row gives the algebraic orders predicted by Theorem~\ref{thm}.\footnote{For $(q_1,q_2)=(2.8,3)$, $\mu=\min\{2.8-0.7,3-1\}=2$, and the theorem gives the bound $O(h^2[1+\log(1/h)])$.
The reported algebraic order $2.000$ does not include this logarithmic factor; the observed second-order convergence is compatible with the bound.}
\begin{table}[htb!]
    \centering
    \begin{tabular}{@{}lcccccccc@{}} 
    \toprule
       &\multicolumn{2}{c}{$(q_1, q_2) = (1, 1.5)$} &\multicolumn{2}{c}{$(q_1, q_2) = (1.8, 1.5)$} & \multicolumn{2}{c}{$(q_1, q_2) = (1.8, 3)$} & \multicolumn{2}{c}{$(q_1, q_2) = (2.8, 3)$}\\ 
    \cmidrule(ll){2-3}
    \cmidrule(ll){4-5}
    \cmidrule(ll){6-7}
    \cmidrule(ll){8-9}
     $h$ & error & rate & error & rate & error & rate & error & rate \\
    \midrule
    1/128  &7.384e-2 &-- -- &2.522e-3 &-- -- &4.417e-5 &-- -- &6.334e-6 &-- --\\
    1/256  &5.998e-2 &0.300 &1.795e-3 &0.491 &1.962e-5 &1.170 &1.589e-6 &1.995 \\
    1/512  &4.872e-2 &0.300 &1.274e-3 &0.495 &8.931e-6 &1.136 &3.977e-7 &1.998 \\
    1/1024 &3.957e-2 &0.300 &9.026e-4 &0.497 &4.116e-6 &1.118 &9.948e-8 &1.999 \\
    1/2048 &3.214e-2 &0.300 &6.390e-4 &0.498 &1.909e-6 &1.109 &2.488e-8 &2.000 \\
    \midrule
    Rate in \eqref{thm1-ineq2} &&0.300&&0.500&&1.100& &2.000 \\
    \bottomrule
    \end{tabular}
\caption{Numerical errors $\|e^h_{\Dt}\|_{l^\infty(\Og)}$ and convergence rate of our method in comparison to the predicted rate in \eqref{thm1-ineq2} , where $u$ is defined in \eqref{ex2-u-new} and $\ap(x) = 0.6+0.4x$.}
    \label{tab:example4-2}
\end{table}
Since $\alpha(x)$ increases monotonically, its maximum value is attained at $x=1$.
Table~\ref{tab:example4-2} shows that the convergence rate is determined by the competition between the values of $q_1-\alpha(0.25)$ and $q_2-\alpha(1)$.
For instance, when $(q_1,q_2)=(1,1.5)$, we have $q_1-\alpha(0.25)<q_2-\alpha(1)$, and hence the convergence rate is governed by $q_1-\alpha(0.25)$.
In contrast, for $(q_1,q_2)=(1.8,1.5)$, $q_1-\alpha(0.25)>q_2-\alpha(1)$, so the convergence rate is determined by $q_2-\alpha(1)$.
For $(q_1,q_2)=(2.8,3)$, the numerical results exhibit second-order convergence.

Our numerical results are in excellent agreement with the theoretical predictions of Theorem~\ref{thm}.
They further confirm that the convergence rate is governed by the interplay between the local regularity of the solution and the fractional order at the low-regularity points, rather than by the maximum value of $\ap(x)$.

\medskip
\begin{example}\label{2d-op}
Consider the $d$-dimensional function
\bea\label{ex2-u}
u(\bx) = e^{-10|\bx|^2}, \qquad \mbox{for} \ \ \bx \in {\mathbb R}^d.
\eea
Here, we approximate $(-\Dt)^{\ap(\bx)/2}u(\bx)$ on the domain $\Og = [-2, 2]^2$, and its exact solution is \cite{Wu2024}
\bea
(-\Dt)^{\ap(\bx)/2}u(\bx) = 2^{\ap(\bx)} \sqrt{10^{\ap(\bx)}} \Gamma\Big(1+\fl{\ap(\bx)}{2}\Big){}_{1}F_{1}\Big(1+\fl{\ap(\bx)}{2}; 1;-10|\bx|^2\Big), \qquad \mbox{for} \ \ \bx \in {\mathbb R}^2,
\eea
with ${}_{1}F_{1}$ representing the confluent hypergeometric function.
\end{example}

Table \ref{tab:op-2d-1} presents the numerical errors for $\alpha(\bx) = 1.8 - |\bx|/4$, computed using the fFDM with $M = 40$, and compares them to the constant-order cases with $\alpha_{\max} \equiv \sup_{\bx\in\Og} \alpha(\bx)$ or $\alpha_{\min} \equiv \inf_{\bx\in\Og} \alpha(\bx)$.
Unless otherwise specified, we set $h = h_x = h_y$ in all two-dimensional experiments.
Notably, in two-dimensional cases, the dFDM becomes impractical for finer meshes (e.g., $h = 1/64$ or smaller) due to excessive memory requirements; see Section \ref{section4-2} for further discussion.
\begin{table}[ht]
    \centering
    \begin{tabular}{@{}lcccccc@{}} 
    \toprule
     & \multicolumn{2}{c}{$\ap(\bx)=1.8-\fl{|\bx|}{4}$}  &\multicolumn{2}{c}{$\ap\equiv 1.8$} &\multicolumn{2}{c}{$\ap\equiv1.8-\fl{\sqrt{2}}{2}$} \\ 
    \cmidrule(ll){2-3}
    \cmidrule(ll){4-5}
    \cmidrule(ll){6-7}
    $h$ & error & rate & error & rate & error & rate  \\
    \midrule
    1/8     &1.5354    & -- -- &1.5354 & -- -- &0.1445 & -- --    \\
    1/16    &0.3825    &2.005 &0.3825 &2.005 &3.502e-2 &2.044  \\
    1/32    &9.344e-2 &2.033 &9.344e-2 &2.033 &8.595e-3 &2.027  \\
    1/64    &2.276e-2 &2.037 &2.276e-2 &2.037 &2.125e-3 &2.016 \\
    1/128   &5.554e-3 &2.035 &5.554e-3 &2.035 &5.280e-4 &2.009 \\
    1/256   &1.359e-3 &2.032 &1.358e-3 &2.032 &1.315e-4 &2.005  \\
    \bottomrule
    \end{tabular}
\caption{Numerical errors $\|e^h_{\Dt}\|_{l^\infty(\Og)}$ and convergence rate of our method, where $u$ is defined in \eqref{ex2-u} and the fast FDM is used with $M = 40$.}
    \label{tab:op-2d-1}
\end{table}
In this case, $u\in C^\infty(\mathbb R^2)$, and the numerical results in Table~\ref{tab:op-2d-1} exhibit second-order convergence.
Furthermore, comparison with the constant-order cases shows that the error for $\alpha(\bx)$ is nearly identical to that for $\alpha_{\max} = 1.8$.
Consistent with the one-dimensional case, when the solution is sufficiently smooth, the maximum error tends to occur near the point where $\alpha(\mathbf{x})$ attains its maximum.
This behavior is further confirmed by the error plots in Fig.~\ref{Fig4-2-1-0}(b).
\begin{figure}[htb!]
\centerline{
\raisebox{5mm}{(a)}\includegraphics[height=5.29cm, width=6.8cm]{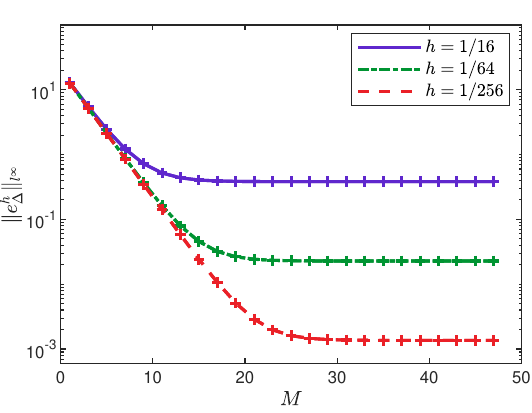}\hspace{5mm}
\raisebox{5mm}{(b)}\includegraphics[height=5.29cm, width=6.8cm]{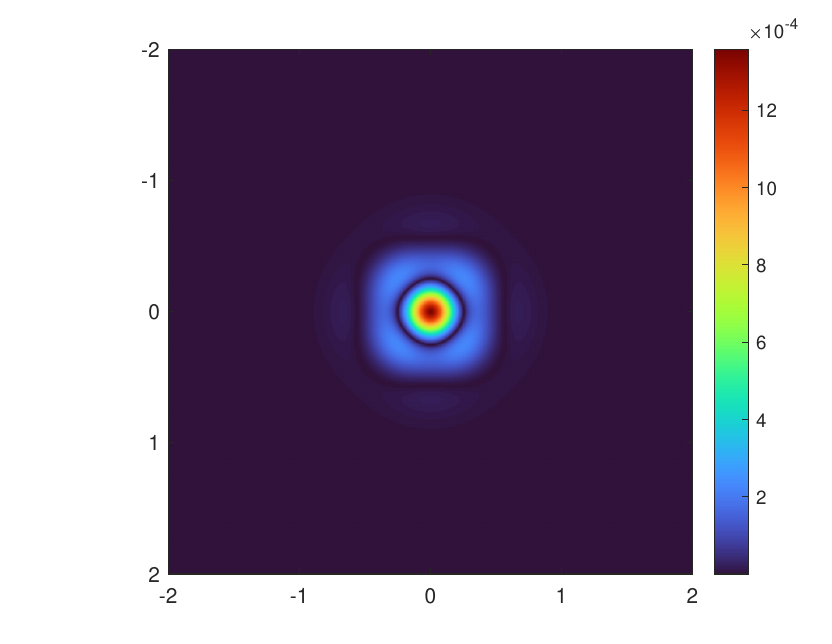}}
\vspace{-3mm}
\caption{(a) Effect of $M$-term truncation in the fast FDM, and (b) pointwise error $|e_\Delta^h(x, y)|$ with $h = 1/256$, in approximating $(-\Delta)^{\alpha(\mathbf{x})/2}$ with $\alpha(\mathbf{x}) = 1.8 - |\mathbf{x}|/4$.}\label{Fig4-2-1-0}
\end{figure}
Fig.~\ref{Fig4-2-1-0}(a) shows the numerical errors of fFDM for various values of $h$ and $M$ to illustrate the effect of $M$-term truncation.
Consistent with our observations in Fig.~\ref{Figure-Error}, numerical errors decrease exponentially as $M$ increases.

\subsection{Efficiency verification}
\label{section4-2}

In the following, we further evaluate the efficiency of our fFDM, compared with dFDM, for approximating the variable-order fractional Laplacian $(-\Delta)^{\ap(\bx)/2} u(\mathbf{x})$.
To this end, we consider the function in \eqref{ex2-u} and approximate $(-\Delta)^{\alpha(\mathbf{x})/2} u(\mathbf{x})$ on domain $\Omega = (-2, 2)^d$.
We compare the computational times of the fast and direct FDMs in both one- and two-dimensional cases.
All experiments are conducted with MATLAB 2022a on a laptop equipped with an Intel(R) Core(TM) i7-12700H CPU and 32 GB of RAM.

Table \ref{Tab4-2-1} compares the CPU time of dFDM and fFDM for one-dimensional cases with various numbers of points $N$, where $M = 65$ is used in fFDM.
The computing time demonstrated here include both the time in assembling matrix and performing matrix-vector multiplications.
The computing time of fFDM depends on the value of $M$.
Our studies in Example 4.1 and 4.2 show that $M = 65$ is large enough for fFDM to obtain the same errors as dFDM.
\begin{table}[ht]
    \centering
    \begin{tabular}{rrrrrr} 
    \toprule
               & $N=1023$ & $N=2047$ & $N=4095$ & $N=8191$ & $N=16383$\\ \hline
    dFDM                  &0.2068 &0.8122 &3.5556 &15.5447 &58.9526\\
    fFDM          &0.1281 &0.1590 &0.1883 &0.3848  &0.7165 \\ 
    \bottomrule
    \end{tabular}
\caption{Computing time comparison (in second) for computing the 1D variable-order fractional Laplacian $(-\Dt)^{\ap(x)/2}u(x)$ with $\ap(x)=1+0.8\cos(\pi x)$, where $M = 65$ is used in fFDM.}
    \label{Tab4-2-1}
\end{table}
The results in Table~\ref{Tab4-2-1} show that fFDM is significantly faster than dFDM.
If the number of points increases from $N_1$ to $N_2$, the computing time of dFDM grows by a factor of $(N_2/N_1)^2$, whereas for fFDM it grows approximately as $(N_2/N_1)$.
The advantages of fFDM become more pronounced, as the number of points increases.
For example, when $N = 16383$, the computing time used by the fFDM is only $1.22\%$ of that of dFDM.
We further observe that the assembly of the coefficient matrix dominates the overall computational cost.

Table \ref{tab:2d-cpu} further compares the performance of dFDM and fFDM in two-dimensional cases, where $M = 40$ is used in fFDM.
Note that the computing time in assembling matrix is ${\mathcal O}(N^2)$ for dFDM, while ${\mathcal O}(MN)$ for fFDM.
The time of matrix-vector multiplications for dFDM and fFDM are ${\mathcal O}(N^2)$ and ${\mathcal O}(2MN\log(2N))$, respectively.
Table~\ref{tab:2d-cpu} shows that for small values of $N$, the fFDM can be slower than dFDM because of the overhead associated with $M$.
However, fFDM becomes significantly more efficient once $N \geq 31^2$.
Furthermore, the memory demands of dFDM grow so quickly with $N$ that fine meshes become impractical.
In particular, when $N \geq 255^2$, only fFDM is feasible.
The memory challenges become even more severe in 3D cases.
\begin{table}[htb!]
    \centering
    \begin{tabular}{rrrrrr} \toprule
          &$N=15^2$  &$N=31^2$   &$N=63^2$   &$N=127^2$ & $N=255^2$  \\ \hline
    dFDM  &16        &217        &3373       &54327 & N.A.        \\ 
    fFDM  &39        &146        &572        &2241  & 9062     \\ 
    parallel fFDM  &22 &99       &324        &1270  & 5066     \\
    \bottomrule
    \end{tabular}
\caption{Computing time (in second) for computing the 2D variable-order fractional Laplacian $(-\Dt)^{\ap(\bx)/2}u(\bx)$, where $M=40$ is used in fFDM, and the parallel fFDM is executed on two CPUs.
For $N = 255^2$, `N.A.' indicates that dFDM is impractical due to memory exhaustion.
}
    \label{tab:2d-cpu}
\end{table}

Moreover, fFDM is readily parallelizable, leading to significant improvements in efficiency.
Table~\ref{tab:2d-cpu} reports the performance of its parallel implementation using two CPUs, verifying the parallelizability of fFDM.
The results show that the computing time of fFDM can be further reduced when parallel computing is used.
This feature makes our approach particularly attractive for real-world high-dimensional applications.
In addition, fFDM avoids the memory bottleneck associated with storing the full matrix, substantially reducing the memory costs.
This advantage becomes even more significant in three dimensions, where the benefits of the fFDM are magnified, particularly for time-dependent problems that require matrix–vector multiplications at every time step.

\subsection{Diffusion in heterogeneous media}
\label{section4-3}

In this section, we assess the performance of our method in solving diffusion problems and explore the role of the variable-order fractional Laplacian in modeling diffusion in heterogeneous media.
To this end, we study both steady and time-dependent diffusion equations.
In particular, we first assess the accuracy of our method by testing it on the benchmark problems.

\begin{example}[Steady diffusion problem] Consider the one-dimensional problem
\begin{equation}\label{Poisson}
(-\Delta)^{\alpha(x)/2}u(x) = f(x), \qquad \mbox{for} \ \ x \in \Omega,
\end{equation}
with homogeneous Dirichlet boundary conditions on $\Omega^c$.
We take $\Omega = (-1,1)$ and solve \eqref{Poisson} for two representative cases.
Both have been widely used as benchmarks for assessing numerical methods for the constant-order fractional Laplacian \cite{Duo2018, Duo2019-TFL, Zhou2024}.
This model also finds broad applications, for instance, in the study of dark matter through modifications of Newtonian gravity \cite{Giusti2020, Giusti2020on}.
\end{example}

\noindent{\bf Benchmark I. }  Choose function $f$ in (\ref{Poisson}) such that the exact solution is given by $u(x) =  x\big[(1-x^2)_+\big]^s$ for $s > 0$.  
Fig.~\ref{fig:ex4-3-1} displays the numerical errors and corresponding convergence rates for different choices of $\alpha(x)$ and $s$.
For easy comparison, each line is accompanied by a reference line of ${\mathcal O}(h^s)$.
\begin{figure}[htb!]
    \centerline{
    \raisebox{5mm}{(a)} \includegraphics[width=0.42\linewidth, height=5.2cm]{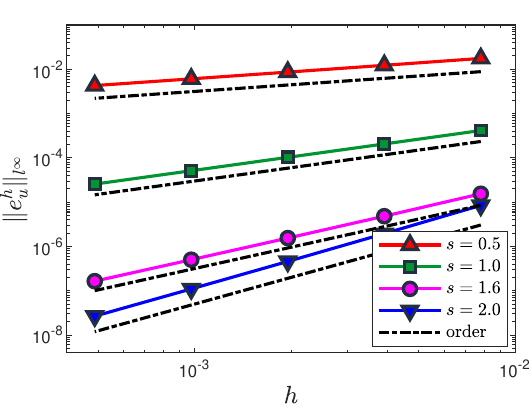}\hspace{3mm}
    \raisebox{5mm}{(b)}
    \includegraphics[width=0.42\linewidth, height=5.2cm]{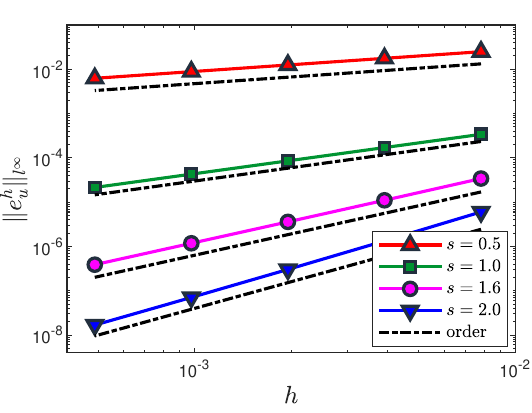}}
    \vspace{-3mm}
\caption{Numerical errors in solving the steady diffusion problem \eqref{Poisson}, where the exact solution is $u(x) = x\big[(1-x^2)_+\big]^s$; (a) $\ap_1(x) = 1-0.5\sin(\pi x)$, and (b) $\ap_2(x) = 1+0.4{\rm tanh}(4x)$.}\label{fig:ex4-3-1}
\end{figure}
The results show that smoother solutions (corresponding to larger values of $s$) yield smaller numerical errors.
Our method achieves an accuracy rate of ${\mathcal O}(h^{\min\{s,2\}})$, consistent with the convergence behavior reported in \cite{Duo2018} for constant-order cases.
Numerical experiments further indicate that the convergence rate is determined by the regularity of the solution $u$.

Fig.~\ref{fig:ex4-3-2} further compares the pointwise errors for different choices of $\alpha(x)$ and $s$.
The results show that for less regular solutions (i.e., smaller $s$), the maximum errors occur near the boundary, and their magnitude is strongly influenced by the values of $\alpha(x)$ near the boundary.
In this case, larger values of $\alpha(x)$ at the boundary lead to larger numerical errors.
When the solution is sufficiently smooth, however, the maximum errors occur in the interior of the domain.
Due to the spatial variability of $\alpha(x)$, the errors are distributed heterogeneously across $(-1,1)$.
\begin{figure}[htb!]
    \centerline{
    \raisebox{5mm}{(a)}
    \includegraphics[width=0.4\linewidth, height=5.1cm]{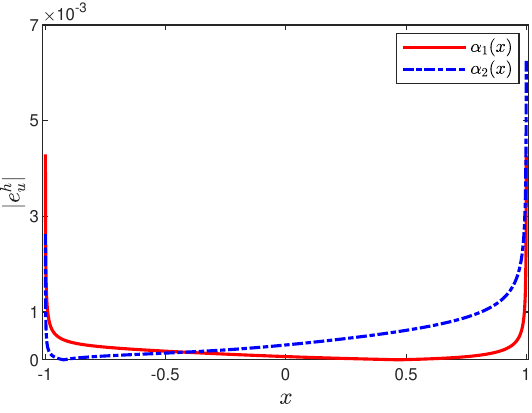}\hspace{4mm}
    \raisebox{5mm}{(b)}
    \includegraphics[width=0.4\linewidth, height=5.1cm]{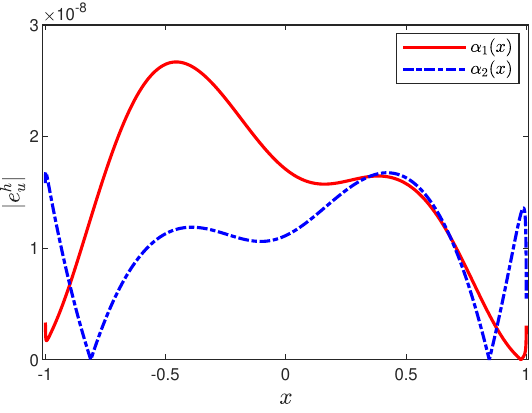}}
    \vspace{-3mm}
\caption{Numerical errors of our method in solving the steady diffusion problem \eqref{Poisson}, where the exact solution is $u_{\rm ex}(x) = x\big[(1-x^2)_+\big]^s$, $\ap_1(x) = 1-0.5\sin(\pi x)$, and $\ap_2(x) = 1+0.4\,{\rm tanh}(4x)$.
(a): $s = 0.5$; (b): $s = 2$.}
   \label{fig:ex4-3-2}
\end{figure}

\medskip
\noindent{\bf Benchmark II. } Choose the function $f(x) \equiv 1$ in (\ref{Poisson}). 
This benchmark problem has been widely used in the literature (see \cite{Duo2018, Zhou2024} and references therein) for testing the accuracy of numerical methods of the constant-order fractional Laplacian.
When $\alpha$ is constant, the exact solution of (\ref{Poisson}) is given by $u(x) = (1-x^2)^{\fl{\alpha}{2}}/\Gamma(1+\alpha)$ for $x \in (-1, 1)$.
To the best of our knowledge, no exact solution has been established for the variable-order case.

Fig.~\ref{fig:poisson-1} presents the numerical solution $u_h$ for two choices of $\alpha(x)$, along with the function $v(x) = (1-x^2)^{\alpha(x)/2}/\Gamma(1+\alpha(x))$ for comparison.
The results show that $u(x)$ and $v(x)$ differ substantially, suggesting that the solution of variable-order problem is not a straightforward extension of that of the constant-order case.
\begin{figure}[htb!]
    \centerline{
    \raisebox{5mm}{(a)} \includegraphics[width=0.39\linewidth, height=5.1cm]{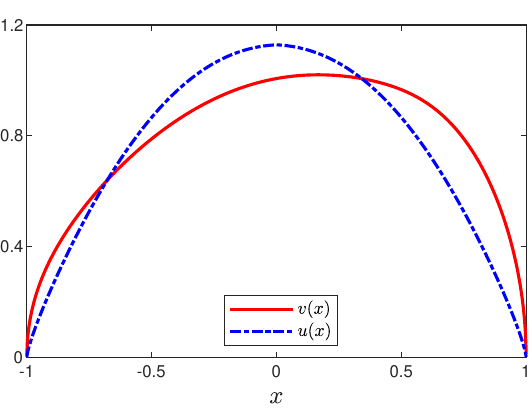}
    \hspace{4mm}
    \raisebox{5mm}{(b)} \includegraphics[width=0.39\linewidth, height=5.1cm]{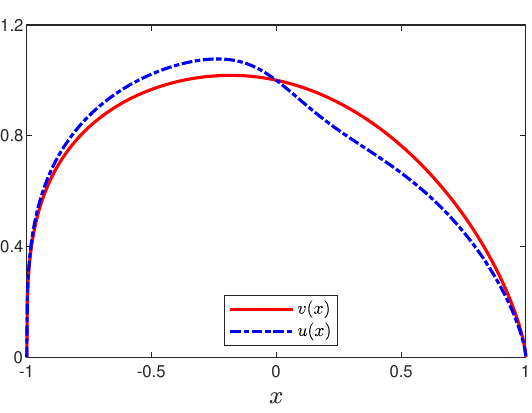}}
    \vspace{-3mm}
\caption{Illustration of the numerical solution $u_h$ of Benchmark II and its comparison with $v(x)=\fl{1}{\Gamma(1+\ap(x))}(1-x^2)^{{\ap(x)}/{2}}$.
(a) $\ap(x) = 1-0.5\sin(\pi x)$; (b) $\ap(x) = 1+0.4\,{\rm tanh}(4x)$.}
    \label{fig:poisson-1}
\end{figure}
We also observe that when $\alpha(x)$ is symmetric with respect to the center of the domain, the solution preserves this symmetry.
Moreover, the smoothness of the solution at the boundary is directly influenced by the boundary values of $\alpha(x)$: larger values of $\alpha(x)$ at the boundary yield smoother behavior, as further illustrated in Fig.~\ref{fig:poisson-2}.

Table~\ref{tab:possion-1} presents the numerical errors in the solution for different choices of $\alpha(x)$.
Since the exact solution is unknown, the errors are estimated by comparing $u_h$ with a reference solution computed on a finer mesh of $h/2$, i.e., $e_u^h = u_h - u_{h/2}$.
To facilitate comparison, Table~\ref{tab:possion-2} reports the numerical errors for the constant-order fractional Poisson problems, with each constant order chosen as $\alpha_i=\min\{\ap_i(1),\ap_i(-1)\}$, the smaller order at the two low-regularity points.
\begin{table}[htb!]
    \centering
    \begin{tabular}{@{}lcccccccc@{}} 
    \toprule
       &\multicolumn{2}{c}{\small $\ap_1(x) = 0.6e^{-x}$} &\multicolumn{2}{c}{\small $\ap_2(x) = 0.5+|x|$} & \multicolumn{2}{c}{\small $\ap_3(x)=1-\fl{1}{2}\sin(\pi x)$} & \multicolumn{2}{c}{\small $\ap_4(x)=1+\fl{2}{5}\tanh(4x)$}\\ 
    \cmidrule(ll){2-3}
    \cmidrule(ll){4-5}
    \cmidrule(ll){6-7}
    \cmidrule(ll){8-9}
     $h$ & error & rate & error & rate & error & rate & error & rate \\
    \midrule
    1/128  &2.885e-2 &-- -- &1.995e-3 &-- -- &9.710e-3 & -- -- &2.081e-2 & -- --\\
    1/256  &2.661e-2 &0.117 &1.195e-3 &0.739 &6.938e-3 &0.485  &1.690e-2 &0.301\\
    1/512  &2.459e-2 &0.114 &7.141e-4 &0.743 &4.936e-3 &0.491  &1.372e-2 &0.301\\
    1/1024 &2.275e-2 &0.112 &4.258e-4 &0.746 &3.502e-3 &0.495  &1.114e-2 &0.300\\
    1/2048 &2.107e-2 &0.111 &2.536e-4 &0.748 &2.481e-3 &0.497  &9.048e-3 &0.300\\
    \bottomrule
    \end{tabular}
\caption{Numerical errors $\|e^h_u\|_{l^\infty(\Og)}$ and convergence rate of our method in solving (\ref{Poisson}) with $f = 1$.}
    \label{tab:possion-1}
\end{table}
By comparing Tables~\ref{tab:possion-1} and \ref{tab:possion-2}, we observe that numerical errors decrease as $h$ reduces.
For the constant-order cases (see Table~\ref{tab:possion-2}), the convergence rate is ${\mathcal O}(h^{\alpha/2})$, which is consistent with the findings in \cite{Duo2018, Duo2019-TFL}.
In contrast, for the variable-order cases, the accuracy is determined by the minimum value of $\alpha(x)$ at the boundary $x = \pm 1$.
More precisely, the convergence rate is ${\mathcal O}(h^{\min\{\ap_i(1),\ap_i(-1)\}/2})$.
\begin{table}[htb!]
    \centering
    \begin{tabular}{@{}lcccccccc@{}} 
    \toprule
       &\multicolumn{2}{c}{$\ap_1 \equiv 0.6/e$} &\multicolumn{2}{c}{$\ap_2 \equiv 1.5$} & \multicolumn{2}{c}{$\ap_3 \equiv 1.0$} & \multicolumn{2}{c}{$\ap_4 \equiv 0.6$}\\ 
    \cmidrule(ll){2-3}
    \cmidrule(ll){4-5}
    \cmidrule(ll){6-7}
    \cmidrule(ll){8-9}
     $h$ & error & rate & error & rate & error & rate & error & rate \\
    \midrule
    1/128  &2.911e-2 &-- -- &1.496e-3 &-- -- &8.411e-3 &-- -- &2.171e-2 & -- --\\
    1/256  &2.695e-2 &0.111 &8.887e-4 &0.752 &5.941e-3 &0.501 &1.762e-2 &0.301 \\
    1/512  &2.496e-2 &0.111 &5.281e-4 &0.751 &4.200e-3 &0.501 &1.431e-2 &0.301 \\
    1/1024 &2.312e-2 &0.110 &3.140e-4 &0.750 &2.969e-3 &0.500 &1.162e-2 &0.300 \\
    1/2048 &2.142e-2 &0.110 &1.867e-4 &0.750 &2.099e-3 &0.500 &9.435e-3 &0.300 \\
    \bottomrule
    \end{tabular}
\caption{Numerical errors $\|e^h_u\|_{l^\infty(\Og)}$ and convergence rate of our method in solving (\ref{Poisson}) with $f = 1$, where the constant order $\ap_i = \min\{\ap_i(1), \ap_i(-1)\}$ with $\ap_i(x)$ given in Table \ref{tab:possion-1}.
Numerical erros show a convergence rate of order ${\mathcal O}(h^{{\alpha}/2})$.}
    \label{tab:possion-2}
\end{table}

Fig.~\ref{fig:poisson-2} provides a direct comparison of the numerical solutions and associated errors for the constant- and variable-order cases.
Fig.~\ref{fig:poisson-2}(a)--(b) show the sharp boundary layers form in the solution; the smaller the value of $\alpha(x)$ at the boundary, the sharper the layers are.
More precisely, as $x \to \pm 1$, the solution shows the asymptotic behavior $u(x) \sim (1-|x|^2)^{\ap(x)/2}$, which characterizes the boundary singularity induced by the variable-order fractional Laplacian.
\begin{figure}[htb!]
    \centerline{
    \raisebox{5mm}{(a)}
    \includegraphics[width=0.42\linewidth, height = 5.06cm]{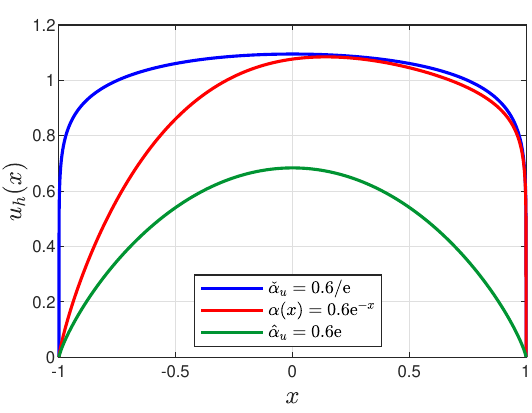} 
    \hspace{4mm}
    \raisebox{5mm}{(b)}
    \includegraphics[width=0.42\linewidth, height = 5.06cm]{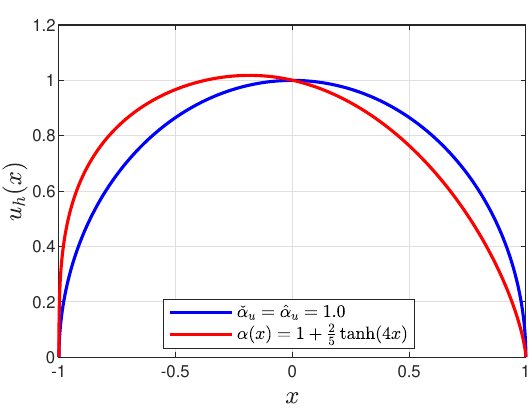}}
    \centerline{
   \raisebox{5mm}{(c)}
   \includegraphics[width=0.42\linewidth, height = 5.06cm]{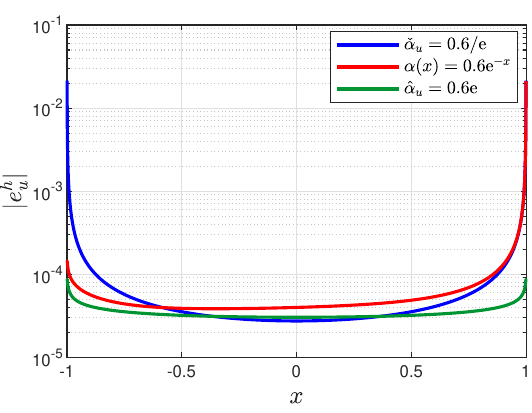}
   \hspace{4mm}
    \raisebox{5mm}{(d)}
    \includegraphics[width=0.42\linewidth, height = 5.06cm]{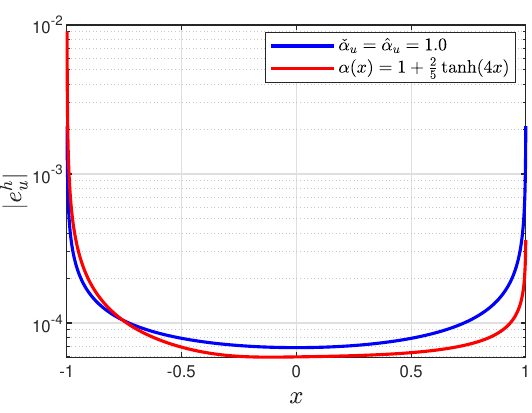}}
    \vspace{-3mm}
\caption{Comparison of numerical solution (a \& b) and their errors (c \& d) in solving the problem \eqref{Poisson} with $f = 1$.}\label{fig:poisson-2}
\end{figure}
Fig.~\ref{fig:poisson-2}(c)--(d), together with our extensive numerical experiments, show that the maximum errors occur near the boundary, while the error behavior differs substantially between the constant- and variable-order cases.
These observations highlight the intrinsic complexity of the variable-order fractional Laplacian.
We note that the regularity of solutions to \eqref{Poisson} with $f(x) \equiv 1$ and constant-order $\alpha$ has been thoroughly studied in \cite{Ros-Oton2014}, whereas corresponding results for the variable-order fractional Laplacian are still absent from the literature.
Establishing such results therefore represents an important open problem for future research.

\begin{example}[Transient diffusion in heterogeneous media]
Consider the time-dependent diffusion problem of the form \cite{Wu2024}:
\bea\label{diffusion}
\p_t u(\bx, t) + \kappa(-\Dt)^{\ap(\bx)/2} u(\bx, t) = 0, &\quad  &\mbox{for} \ \  \bx \in \Og, \ \  t \in (0, T], \\
\label{IC}
u(\bx, 0) = 10 e^{-16|\bx|^2}, &\quad&\mbox{for} \ \ \bx \in \overline{\Og},
\eea
where the diffusion constant $\kappa > 0$, and the homogeneous Dirichlet boundary conditions are applied on ${\mathbb R}^2\backslash\Og$.
The parameter $\ap(\bx)$ can be considered as a material-dependent quantity, for example, in cardiac tissue with localized regions of damage \cite{Farquhar2018}.
\end{example}

Throughout the study, we set the computational domain $\Og = (-3, 3)^2$, the final time $T = 3$, and the constant $\kappa = 0.1$.
Consider the fractional power $\ap(x, y)$ of the form:
\bea \label{alpha}
\ap_i(x,y) = a_i + 0.947
\sum_{x_0, \, y_0 \in C_i}
\tanh\!\Big(\frac{(x - x_0)^2 + (y - y_0)^2 - 0.12}{\varepsilon}\Big),\qquad i = 1, 2,
\eea
where we choose $\varepsilon = 0.04$, $a_1 = 0.953$, $a_2 = -13.258$, $C_1 = \{0.6\}$, and $C_2 = \{\pm 0.6, \pm 1.8\}$.
Fig. \ref{fig:ex4-4-1} (left) displays the radially symmetric initial condition at $t = 0$.
Fig. \ref{fig:ex4-4-1} (middle and right) illustrate the functions $\ap_i(x, y)$ for $i =1, 2$, representing a composite medium, consisting of two different materials, where one material is embedded within the other~\cite{Zhang2011}.
\begin{figure}[htb!]
    \centering
    \includegraphics[width=0.20\linewidth]{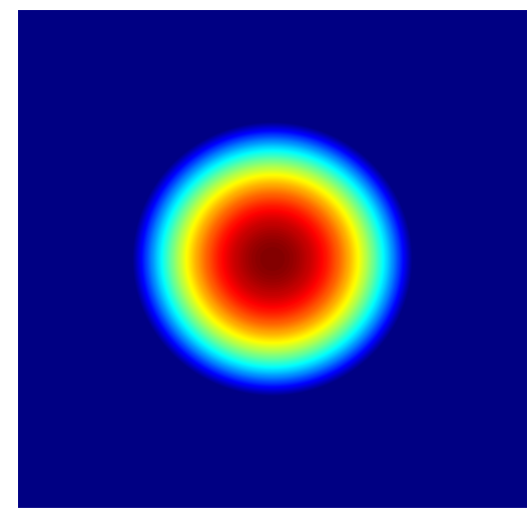}\hspace{3mm}
    \includegraphics[width=0.20\linewidth]{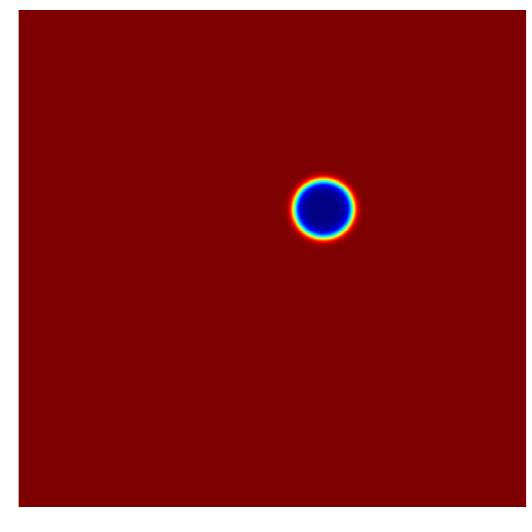}\hspace{3mm}
    \includegraphics[width=0.20\linewidth]{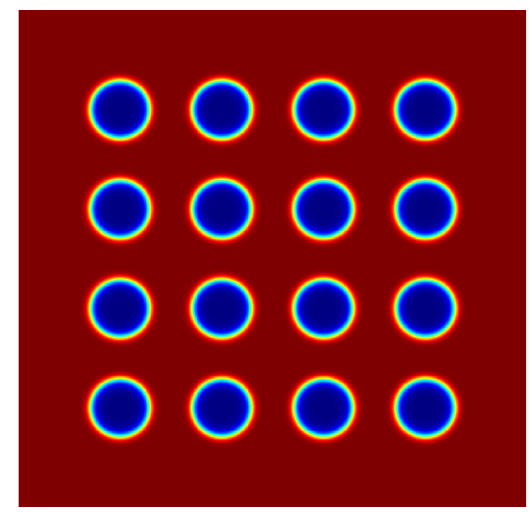}
    \vspace{-3mm}
\caption{Illustration of the initial condition $u(\bx, 0)$ (left), and the heterogeneous power $\ap_1(x, y)$ (middle) and $\ap_{2}(x, y)$ (right) defined in \eqref{alpha}, where the dark-red region indicates $\ap_i(x, y) = 1.9$ and the dark-blue region indidates $\ap_i(x, y) = 0.0107$.}
    \label{fig:ex4-4-1}
\end{figure}
In our simulations, the diffusion problem \eqref{diffusion}--\eqref{IC} is discretized in space by our proposed fFDM and in time with the classical fourth-order Runge--Kutta (RK4) method.
We use the mesh size $h = 1/128$ and the time step $\tau = 10^{-4}$.
A refinement study confirms that the selected mesh size and time step are fine enough, and additional reductions produce no observable change in the results.

\begin{figure}[thb!]
    \centering
    \includegraphics[width=0.22\linewidth, height = 3.3cm]
{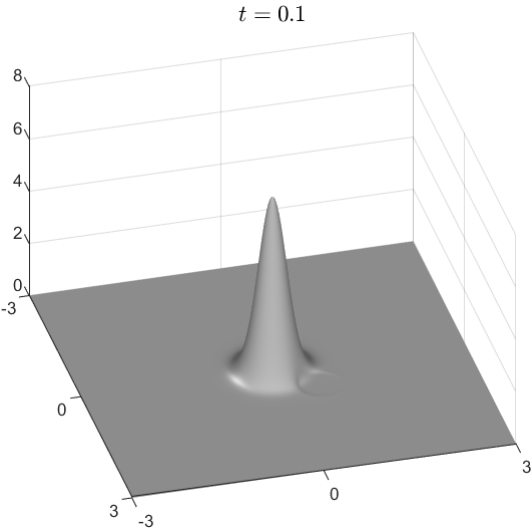}
    \includegraphics[width=0.22\linewidth, height = 3.3cm]
{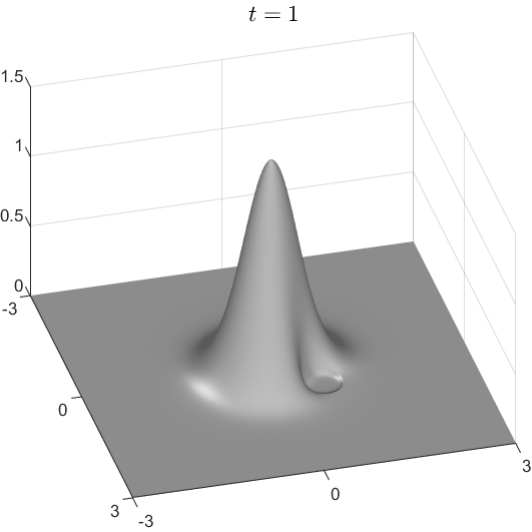}
    \includegraphics[width=0.22\linewidth, height = 3.3cm]
{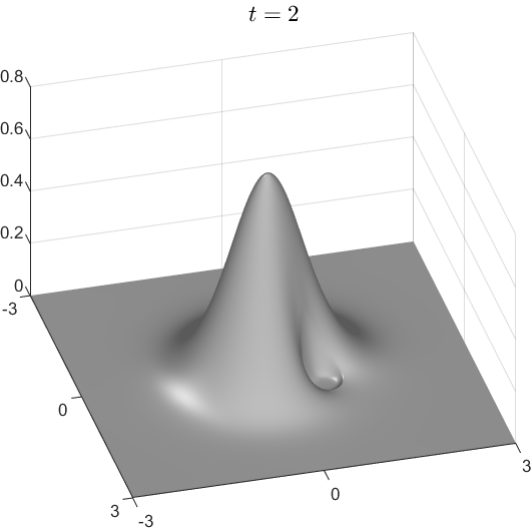}
    \includegraphics[width=0.22\linewidth, height = 3.3cm]
{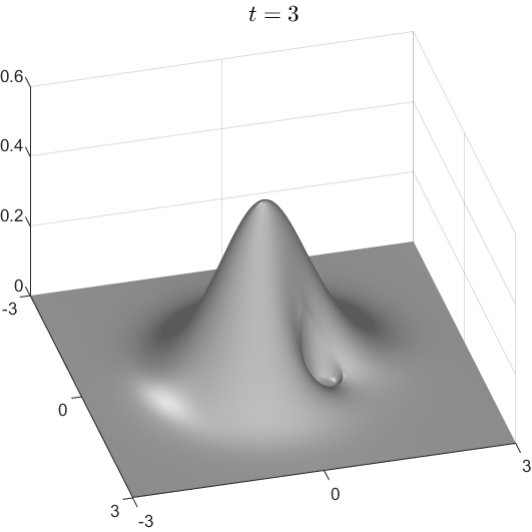}

    \includegraphics[width=0.2\linewidth, height=3.3cm]
{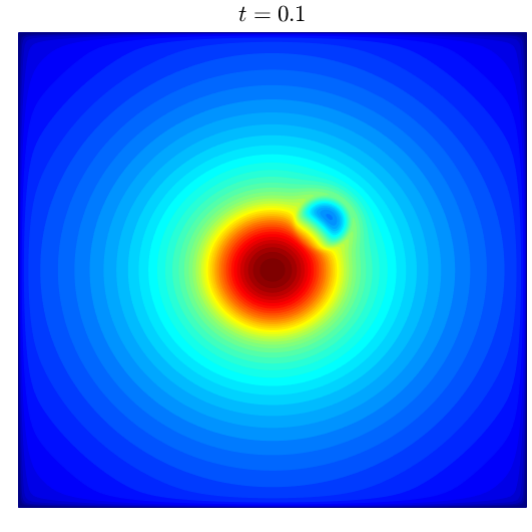}
    \hspace{0.5mm}
    \includegraphics[width=0.2\linewidth, height=3.3cm]{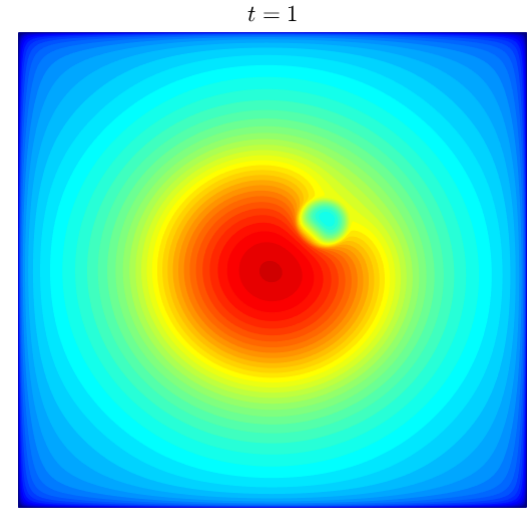} 
    \hspace{0.5mm}
    \includegraphics[width=0.2\linewidth, height=3.3cm]
{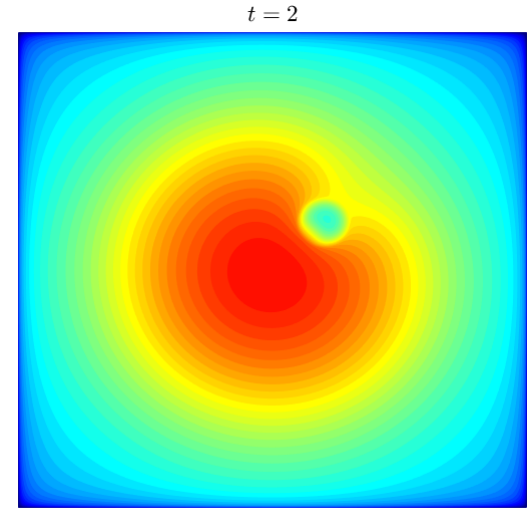}
    \hspace{0.5mm}
    \includegraphics[width=0.22\linewidth, height=3.4cm]
{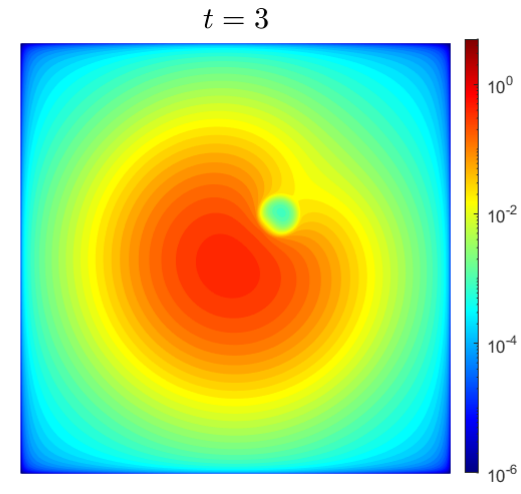}
    \vspace{-3mm}
\caption{Dynamics of the solution to the diffusion problem \eqref{diffusion}--\eqref{IC} in a {\it heterogeneous} medium characterized by $\ap_1(\bx)$ in \eqref{alpha}.
Top row: surface plots; bottom row: corresponding top views.}
    \label{fig:ex4-4-2}
\end{figure}
\begin{figure}[thb!]
    \centering
    \includegraphics[width=0.22\linewidth, height = 3.3cm]
{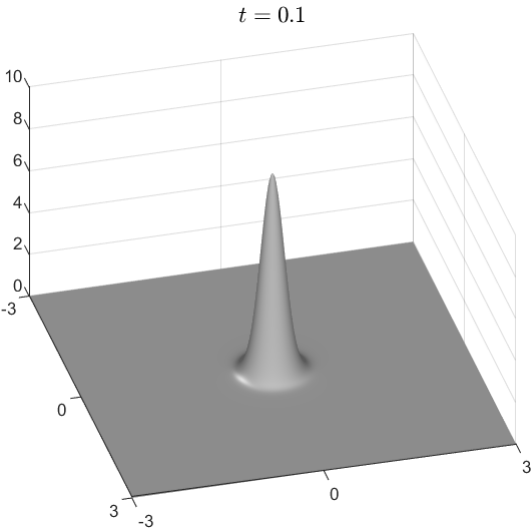}
    \includegraphics[width=0.22\linewidth, height = 3.3cm]
{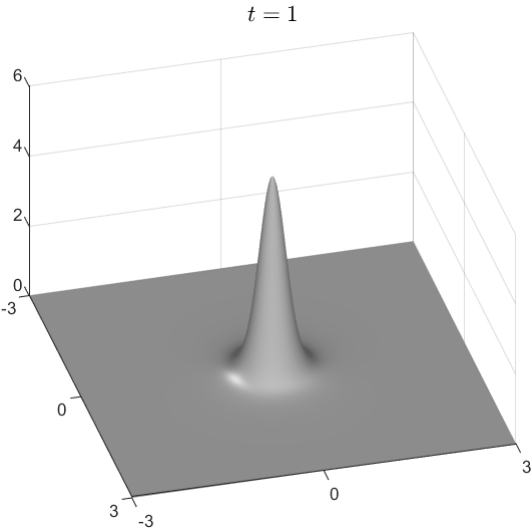}
    \includegraphics[width=0.22\linewidth, height = 3.3cm]
{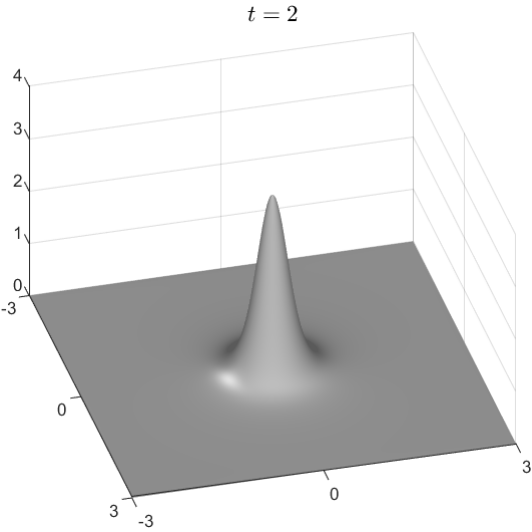}
    \includegraphics[width=0.22\linewidth, height = 3.3cm]
{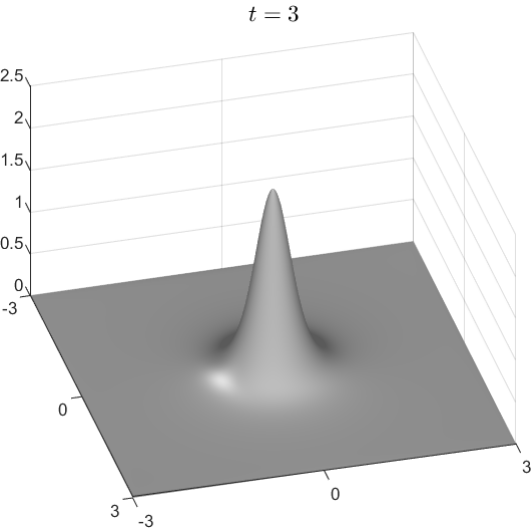}

    \includegraphics[width=0.2\linewidth, height=3.3cm]
{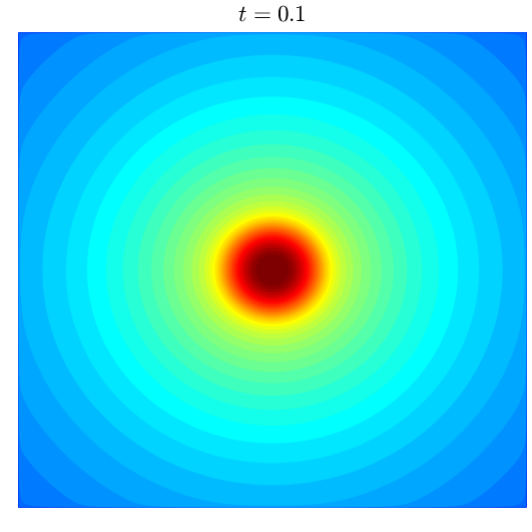}
    \hspace{0.5mm}
    \includegraphics[width=0.2\linewidth, height=3.3cm]{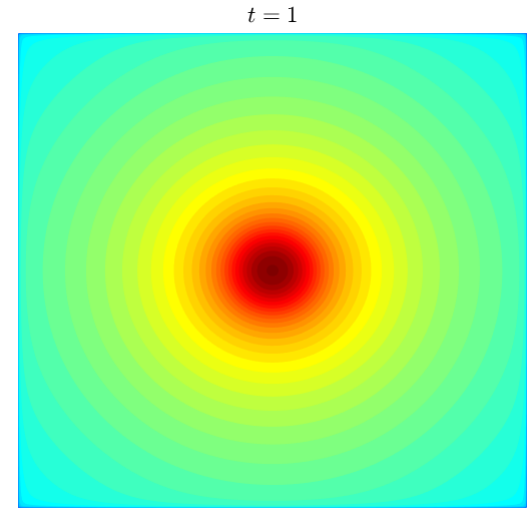} 
    \hspace{0.5mm}
    \includegraphics[width=0.2\linewidth, height=3.3cm]
{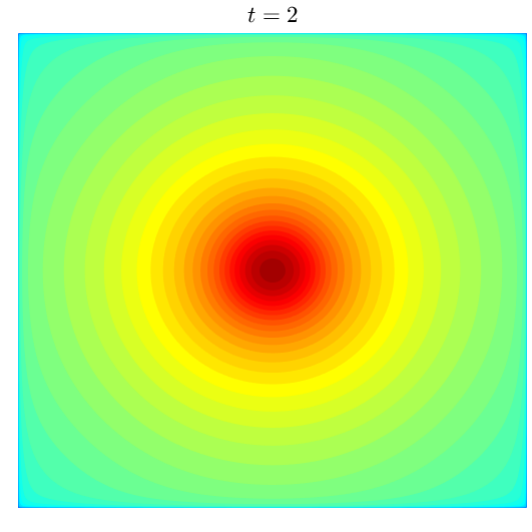}
    \hspace{0.5mm}
    \includegraphics[width=0.22\linewidth, height=3.4cm]
{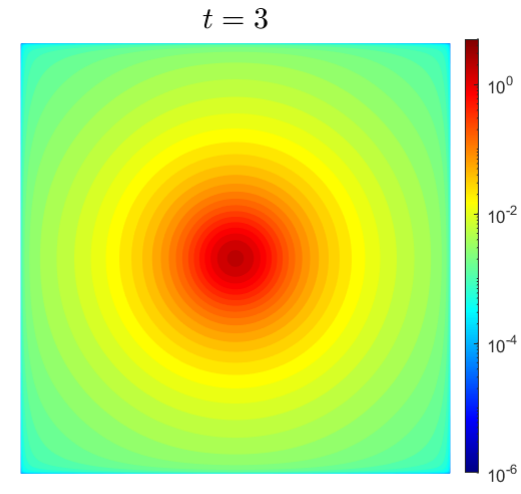}
    \vspace{-3mm}
\caption{Dynamics of the solution to the diffusion problem \eqref{diffusion}--\eqref{IC} in a {\it homogeneous} medium with $\ap \equiv 0.95$.
Top row: surface plots; bottom row: corresponding top views.}
    \label{fig:ex4-4-3}
\end{figure}
Fig. \ref{fig:ex4-4-2} shows the solution dynamics of \eqref{diffusion}--\eqref{IC} with $\ap_1(\bx)$.
For comparison, Fig.~\ref{fig:ex4-4-3} shows the corresponding results of the constant-order diffusion equation with $\ap(\bx)\equiv \ap_0$.
It is clear that the spatial dependence of $\ap(\bx)$ has a significant impact on the diffusion behavior.
The solution initially concentrates at the domain center $(0,0)$, and then propagates outward toward the boundary.
Fig.~\ref{fig:ex4-4-2} shows that once the propagating front reaches the slow medium, heterogeneous effects become evident -- the solution grows more slowly within this region, while it propagates around it and continues to expand through the surrounding fast-diffusion area.
In contrast, the diffusion in Fig.~\ref{fig:ex4-4-3} is isotropic, distinguishing from those in Fig.~\ref{fig:ex4-4-2}.
In the literature, one common approach is to approximate the variable-order fractional Laplacian by an averaged constant-order operator due to the lack of effective numerical methods for the variable-order case; see, e.g., \cite{Zhu2014}.
However, our observations in Figs.~\ref{fig:ex4-4-2}--\ref{fig:ex4-4-3} indicate that a constant-order model fails to capture the heterogeneous phenomena, and our fFDM provides an effective approach for studying diffusion in heterogeneous media.

Fig.~\ref{fig:ex4-4-4} further presents the results for $\ap_2(\bx)$, where the slow-diffusion material is uniformly embedded in a lattice pattern.
Compared to Fig.~\ref{fig:ex4-4-2}, the heterogeneity effects are more significant, and thus the solution dynamics become more complicated.
In this case, numerical simulation becomes considerably more challenging if different diffusion models are used for each material, as discussed in \cite{Delia2021}.
These results further suggest that the variable-order nonlocal operator provides an effective framework for capturing anomalous transport in heterogeneous media.
\begin{figure}[thb!]
    \centering
    \includegraphics[width=0.22\linewidth, height = 3.3cm]
{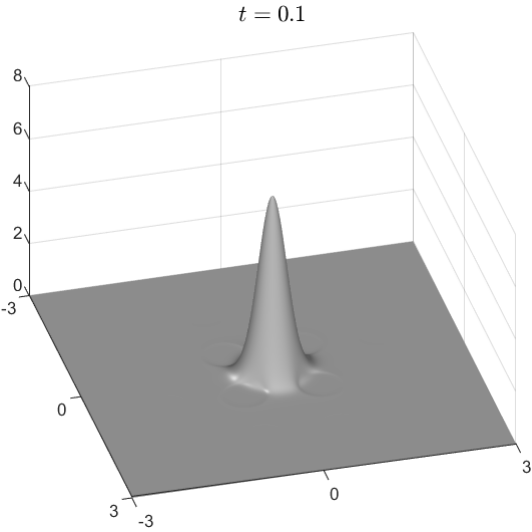}
    \includegraphics[width=0.22\linewidth, height = 3.3cm]
{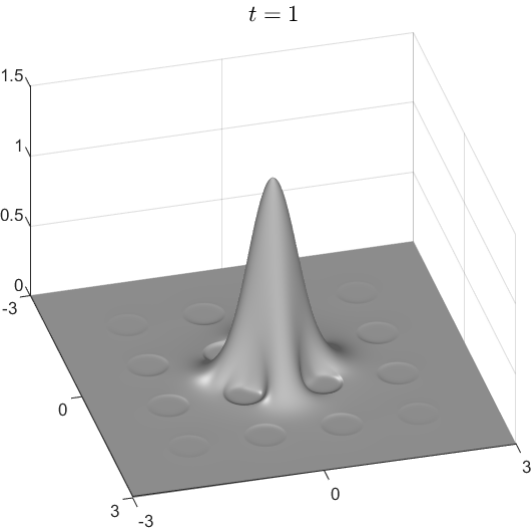}
    \includegraphics[width=0.22\linewidth, height = 3.3cm]
{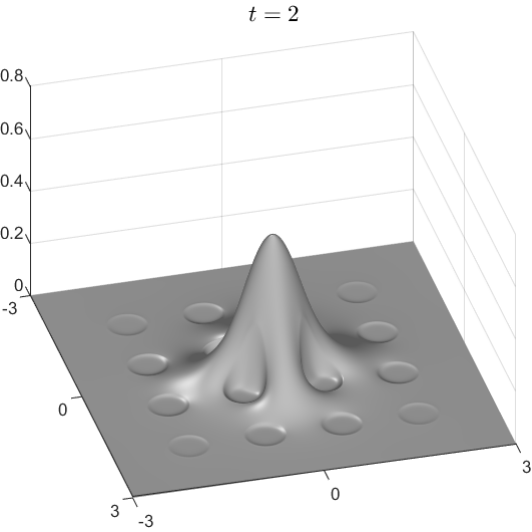}
    \includegraphics[width=0.22\linewidth, height = 3.3cm]
{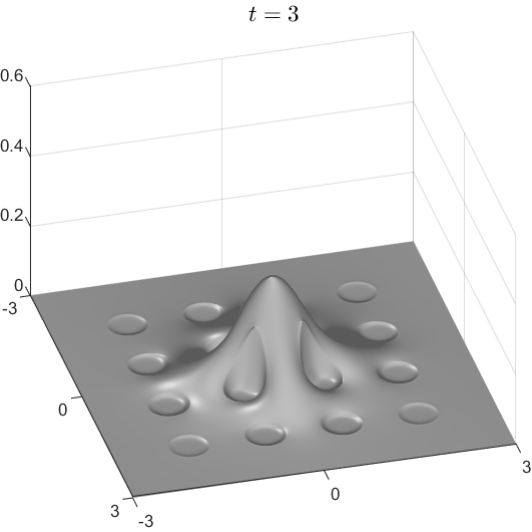}

    \includegraphics[width=0.2\linewidth, height=3.3cm]
{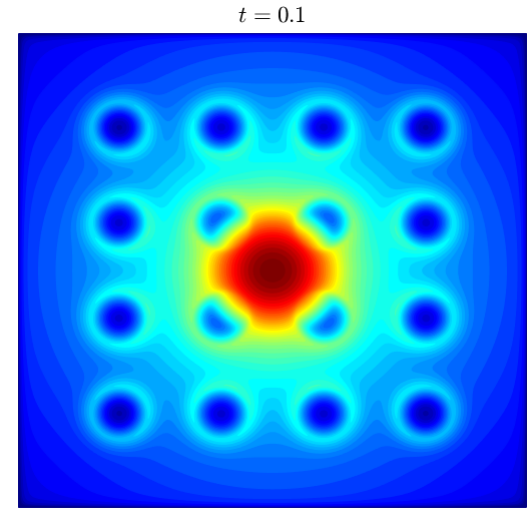}
    \hspace{0.5mm}
    \includegraphics[width=0.2\linewidth, height=3.3cm]{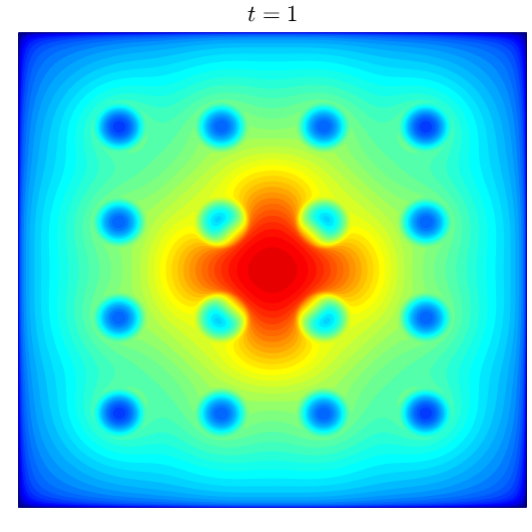} 
    \hspace{0.5mm}
    \includegraphics[width=0.2\linewidth, height=3.3cm]
{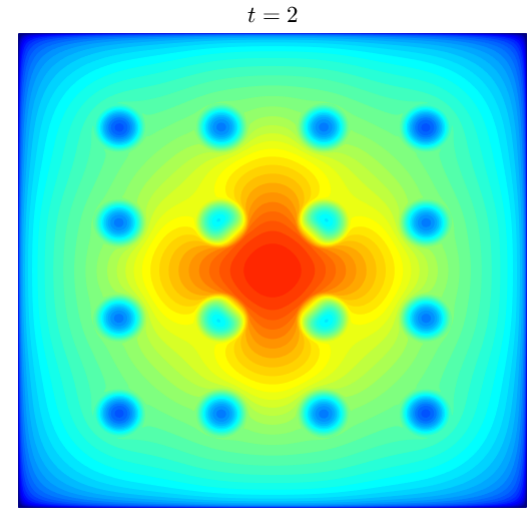}
    \hspace{0.5mm}
    \includegraphics[width=0.22\linewidth, height=3.4cm]
{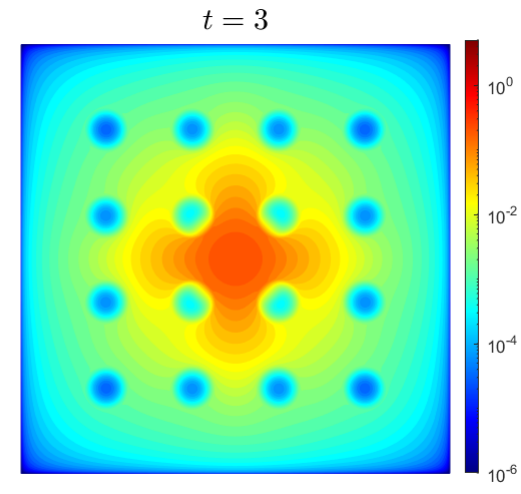}
    \vspace{-3mm}
\caption{Dynamics of the solution to the diffusion problem \eqref{diffusion}--\eqref{IC} in a {\it heterogeneous} medium characterized by $\ap_2(\bx)$ in \eqref{alpha}.
Top row: surface plots; bottom row: corresponding top views.}
    \label{fig:ex4-4-4}
\end{figure}

Fig.~\ref{fig:ex4-4-4}(a)--(c) presents the solution profiles along the line $y = 0.6$ at different times for $\alpha_1(\mathbf{x})$, $\alpha_2(\mathbf{x})$, and $\alpha(\mathbf{x}) \equiv \alpha_0$.
The line $y = 0.6$ passes through the center of the embedded material.
The heterogeneity effects become more evident in these plots, where sharp solution layers form near the interfaces between materials (see Fig.~\ref{fig:ex4-4-4}(b)–(c)).
\begin{figure}[htb!]
    \centerline{
    \raisebox{5mm}{(a)}
    \includegraphics[width=0.42\linewidth, height = 5.cm]{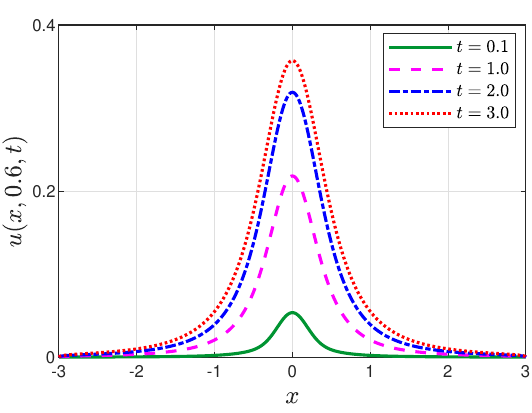}
    \hspace{4mm}
    \raisebox{5mm}{(b)}
    \includegraphics[width=0.42\linewidth, height = 5.cm]{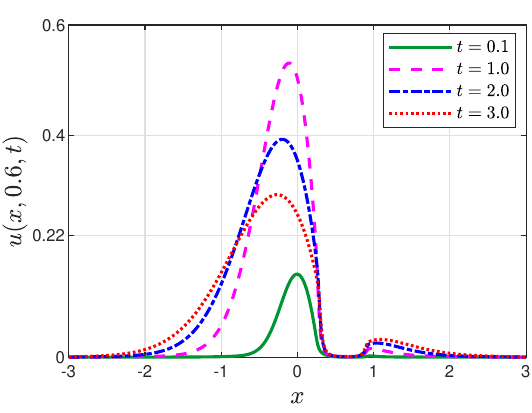}}
    \centerline{
    \raisebox{5mm}{(c)}
    \includegraphics[width=0.42\linewidth, height = 5.cm]{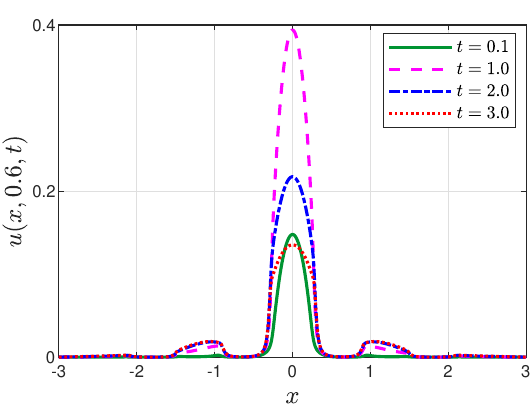}
    \hspace{5mm}
    \raisebox{5mm}{(d)}
    \includegraphics[width=0.40\linewidth, height = 5.cm]{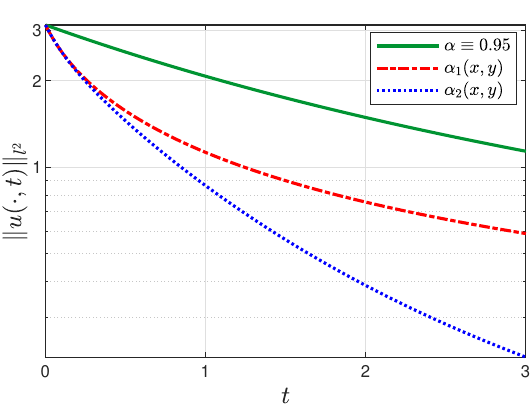}}
    \vspace{-3mm}
\caption{(a)--(c) Time evolution of the solution along the line $y=0.6$, which passes through the center of the slow-diffusion inclusion in the heterogeneous cases.
(a): homogeneous case with $\alpha \equiv 0.95$, (b) and (c): heterogeneous cases with $\alpha_1(\bx)$ and $\alpha_2(\bx)$, respectively.
(d) Evolution of the mass $\|u(\cdot,t)\|_{\ell^2}$, quantifying the solution decay.
}
    \label{fig:ex4-4-5}
\end{figure}
Moreover, Fig.~\ref{fig:ex4-4-4}(a) shows that in the homogeneous medium the solution at $y=0.6$ increases monotonically in time.
In contrast, this monotonic trend breaks down in the heterogeneous cases where the value at $y=0.6$ first rises and then declines (Fig.~\ref{fig:ex4-4-4}(b)–(c)).
This clearly suggestes the complicattion in the heterogeneous media.
Fig.~\ref{fig:ex4-4-4} (d) further compares the temporal evolution of the mass $\|u(\cdot,t)\|_{l^2}$ for these three cases.

\section{Conclusions}
\label{section5}

We introduced an accurate parallel finite-difference discretization for the variable-order fractional Laplacian $(-\Delta)^{\alpha(\bx)/2}$ and applied it to study anomalous diffusion in heterogeneous media.
To address the computational challenges arising from both heterogeneity and nonlocality, we proposed a decomposition strategy that separately handles the heterogeneous order and the nonlocal interactions.
Our method requires memory of order $\mathcal{O}(MN)$ and has a computational complexity $\mathcal{O}\!\big(MN\log(2N)\big)$ with $N$ the total number of spatial points and $M \ll N$.
Moreover, our method is naturally parallel and can further reduce computation time.
Our error analysis shows that the rate depends on the competition between the local regularity gaps at interior and boundary low-regularity points and the regularity exponents of exterior points.
The numerical experiments confirm the predicted convergence rates and illustrate the interplay between solution regularity and the spatially varying fractional order.
Extensive experiments were conducted to demonstrate the effectiveness of our method in solving anomalous diffusion problems, especially in higher dimensions.
In contrast to the constant-order cases, the interaction between the solution regularity and the spatial order $\alpha(\bx)$ plays a critical role in the design and analysis of numerical schemes.
This aspect remains largely unexplored in the literature, and standardized benchmark problems are still lacking.
We will address these issues in future work.

\section*{Acknowledgements}
This work was partially supported by the US National Science Foundation under grant numbers DMS–1913293 and DMS–1953177.

\appendix
\counterwithin{equation}{section}
\renewcommand{\theequation}{\thesection.\arabic{equation}}

\section{Proof of Theorem~\ref{thm}}
\label{thm1proof}

We first introduce two lemmas that will be used repeatedly in the proof.
\begin{lemma}
\label{lem:centered}
For a function $v$, set $\varphi_v(x,\xi):=[v(x+\xi)-2v(x)+v(x-\xi)]/\xi^2$ whenever it is defined.
\begin{enumerate}[label=\textup{(\alph*)}]
\item If $P$ is a polynomial of degree at most three, then $\varphi_P(x,\xi)=P''(x)$ for all $\xi\neq0$.
\item If $v\in C^4([x-r,x+r])$, then for $0<\xi\le r$,
\beas
\label{eq:centered-bounds}
\begin{aligned}
|\varphi_v(x,\xi)-v''(x)|
&\leq C\xi^2\|v^{(4)}\|_{L^\infty([x-r,x+r])},\\
|\partial_\xi\varphi_v(x,\xi)|
&\leq C\xi\|v^{(4)}\|_{L^\infty([x-r,x+r])},\\
|\partial_{\xi\xi}\varphi_v(x,\xi)|
&\leq C\|v^{(4)}\|_{L^\infty([x-r,x+r])},
\end{aligned}
\eeas
where $C$ is an absolute constant independent of $v$, $x$, $r$, and $\xi$.
\end{enumerate}
\end{lemma}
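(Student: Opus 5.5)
Part (a) is a direct computation, and part (b) follows from a Taylor expansion in integral form.

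For (a), expand $P(x\pm\xi)=P(x)\pm P'(x)\xi+\tfrac12P''(x)\xi^2\pm\tfrac16P'''(x)\xi^3$. This expansion is exact because $\deg P\le3$. In the symmetric sum $P(x+\xi)+P(x-\xi)-2P(x)$, the odd powers cancel and only $P''(x)\xi^2$ remains. Dividing by $\xi^2$ gives $\varphi_P(x,\xi)=P''(x)$ for every $\xi\neq0$.

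For (b), write $M_4:=\|v^{(4)}\|_{L^\infty([x-r,x+r])}$ and use the second-order Taylor formula with integral remainder. Define $g(\xi):=v(x+\xi)+v(x-\xi)-2v(x)$. Then $g(0)=g'(0)=0$ and $g''(\xi)=v''(x+\xi)+v''(x-\xi)$, so
\[
\varphi_v(x,\xi)=\frac{g(\xi)}{\xi^2}=\int_0^1(1-s)\,\bigl[v''(x+s\xi)+v''(x-s\xi)\bigr]\md s .
\]
This representation is the key step. It writes $\varphi_v$ as an average of the even function $E(\eta):=v''(x+\eta)+v''(x-\eta)$ evaluated at $\eta=s\xi$.

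Since $E\in C^2$ is even, $E'(0)=0$, and both $|E''|\le 2M_4$ and $|E'(\eta)|\le 2M_4|\eta|$ hold on $[0,r]$.

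The three estimates then follow in turn.

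First, $E(s\xi)-E(0)=\int_0^{s\xi}E'$ is bounded by $M_4 s^2\xi^2$. Together with $\int_0^1(1-s)\md s\cdot 2v''(x)=v''(x)$, this gives $|\varphi_v-v''(x)|\le C\xi^2M_4$.

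Second, differentiate under the integral sign to get $\partial_\xi\varphi_v=\int_0^1(1-s)\,s\,E'(s\xi)\md s$. This is bounded by $\int_0^1 2M_4 s^2\xi\md s\le C\xi M_4$.

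Third, $\partial_{\xi\xi}\varphi_v=\int_0^1(1-s)\,s^2E''(s\xi)\md s$, which is bounded by $CM_4$.

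All evaluation points $x\pm s\xi$ lie in $[x-r,x+r]$ because $0<\xi\le r$ and $s\in[0,1]$. The constants are explicit numbers such as $1/6$ and $1/12$, so they are absolute.

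The only point needing care is justifying differentiation under the integral sign. This is routine here because $v''\in C^2$ on a compact interval, so the integrand is $C^2$ in $\xi$ uniformly in $s$. One must also use the evenness of $E$, which gives $E'(0)=0$; this is what produces the factor $\xi$ in the first-derivative bound rather than an $O(1)$ bound.
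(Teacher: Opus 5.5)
Your proof is correct and follows the paper's argument. Both use the integral-remainder representation $\varphi_v(x,\xi)=\int_0^1(1-s)\bigl[v''(x+s\xi)+v''(x-s\xi)\bigr]\,\mathrm{d}s$, then subtract $v''(x)$ and differentiate under the integral sign, using the symmetry of the integrand; your exact Taylor expansion in part (a) is just an equivalent form of the paper's remark that $P''$ is affine.
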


\begin{proof}
Taylor's formula with integral remainder gives
$$
\varphi_v(x,\xi)
=\int_0^1(1-t)
\bigl[v''(x+t\xi)+v''(x-t\xi)\bigr]\,\mathrm dt.
$$
For a polynomial $P$ of degree at most three, $P''$ is affine, so $\varphi_P(x,\xi)=P''(x)$.
The three bounds follow by subtracting $v''(x)$ and differentiating once and twice under the integral sign, using symmetry and the bound on $v^{(4)}$.
\end{proof}
\begin{lemma}[Consequences of the local remainder bounds]
\label{lem:local}
Suppose Assumption~\ref{assump} holds.
For a fixed $k$, denote $\widetilde r_k=u(x)-P_k(x)$ and $d_k=|x-\hat x_k|$.
Let $\rho=\min\{\delta/8,L\}$ and suppose that $x\in\overline{\Omega}$ with $d_k<2\rho$.
Then all sampled points $x\pm\xi$, $0\le\xi\le\rho$, lie in $B_\delta(\hat x_k)\cap\Omega_L$.
If $0<\sigma_k<1$, then
\begin{equation}
\label{eq:local-holder}
|\widetilde r_k(y)-\widetilde r_k(z)|\leq C|y-z|^{\sigma_k},
\qquad y,z\in B_\delta(\hat x_k)\cap\overline\Omega_L,
\end{equation}
and
\begin{equation}
\label{eq:local-psi}
|\psi(x,\xi)|\leq C\xi^{\sigma_k-2},
\qquad 0<\xi\leq\rho.
\end{equation}
If $1\leq\sigma_k<4$, then $\psi(x,\cdot)$ is absolutely continuous on every interval $[\underline{a},\rho]$ with $0<\underline{a}<\rho$, and
\begin{equation}
\label{eq:local-psi-prime}
|\partial_\xi\psi(x,\xi)|\leq C\xi^{\sigma_k-3}
\quad\text{for almost every }0<\xi\leq\rho.
\end{equation}
For every $0<\sigma_k<4$, the following bounds also hold whenever $0<\xi\leq\rho$ and $2d_k\leq\xi$:
\begin{equation}
\label{eq:local-separated}
|\psi(x,\xi)-P_k''(x)|\leq C\xi^{\sigma_k-2},
\qquad
|\psi_\xi(x,\xi)|\leq C\xi^{\sigma_k-3},
\qquad
|\psi_{\xi\xi}(x,\xi)|\leq C\xi^{\sigma_k-4}.
\end{equation}
\end{lemma}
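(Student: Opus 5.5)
The plan is to prove the five assertions in the order stated, working throughout with the remainder $\widetilde r_k=u-P_k$. By Lemma~\ref{lem:centered}(a) we have $\psi(x,\xi)=\varphi_{\widetilde r_k}(x,\xi)+P_k''(x)$, and $\partial_\xi^j\psi=\partial_\xi^j\varphi_{\widetilde r_k}$ for $j\ge1$. So every estimate reduces to an estimate on $\widetilde r_k$, and (A2)\ref{A2ii} is the only input needed.

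\emph{Localization.} Since $\rho\le\delta/8$ and $d_k<2\rho\le\delta/4$, we get $|x\pm\xi-\hat x_k|\le d_k+\xi<3\delta/8<\delta$. Since $x\in\overline\Omega$ and $\xi\le\rho\le L$, we also have $x\pm\xi\in\Omega_L$.

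\emph{The case $0<\sigma_k<1$.} First note that $u$ and $P_k$ are continuous, so $\widetilde r_k$ is continuous. The $m=0$ bound then gives $\widetilde r_k(\hat x_k)=0$ and $|\widetilde r_k(y)|\le C|y-\hat x_k|^{\sigma_k}$ on $B_\delta(\hat x_k)\cap\overline\Omega_L$.
\begin{itemize}
\item If $y,z$ lie on the same side of $\hat x_k$, integrate the $m=1$ bound $|\widetilde r_k'(\zeta)|\le C|\zeta-\hat x_k|^{\sigma_k-1}$ between them. This gives $\bigl||y-\hat x_k|^{\sigma_k}-|z-\hat x_k|^{\sigma_k}\bigr|$, which is at most $|y-z|^{\sigma_k}$ by subadditivity of $t\mapsto t^{\sigma_k}$.
\item If $y,z$ lie on opposite sides, pass through $\hat x_k$: the two pieces are each bounded by $C|y-z|^{\sigma_k}$.
\end{itemize}
This proves \eqref{eq:local-holder}. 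For \eqref{eq:local-psi}, apply \eqref{eq:local-holder} to $\widetilde r_k(x\pm\xi)-\widetilde r_k(x)$, which gives $|\varphi_{\widetilde r_k}|\le C\xi^{\sigma_k-2}$. The bounded term $P_k''(x)$ is absorbed using $\xi^{\sigma_k-2}\ge\rho^{\sigma_k-2}$.

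\emph{The case $1\le\sigma_k<4$.} The bounds $|\widetilde r_k^{(m)}|\le C|\zeta-\hat x_k|^{\sigma_k-m}$ make $\widetilde r_k^{(j)}$ locally integrable near $\hat x_k$ for every $j\le\lfloor\sigma_k\rfloor^-+1$. Hence $\widetilde r_k,\dots,\widetilde r_k^{(\lfloor\sigma_k\rfloor^-)}$ are absolutely continuous on $B_\delta(\hat x_k)$, and this gives the absolute continuity of $\psi(x,\cdot)$ on $[\underline a,\rho]$. By the same argument as in the previous case, each of these derivatives is Hölder continuous: $\widetilde r_k^{(\ell)}$ has exponent $\sigma_k-\ell$ whenever $\sigma_k-\ell\in(0,1]$, with a Lipschitz bound when the exponent equals $1$. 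Set $\ell=\lfloor\sigma_k\rfloor^-$.
\begin{itemize}
\item Differentiating the integral representation of Lemma~\ref{lem:centered} gives a formula of the form $\partial_\xi\varphi_v=\xi^{-1}\int_0^1K(t)\,[v''(x+t\xi)+v''(x-t\xi)-2v''(x)]\,\mathrm dt$ for a bounded kernel $K$. The $-2v''(x)$ term may be inserted because $\partial_\xi\varphi$ vanishes on quadratics.
\item When $\ell\le1$ (i.e.\ $\sigma_k\le 2$), I would instead work directly with $\partial_\xi\varphi=[\widetilde r_k'(x+\xi)-\widetilde r_k'(x-\xi)]/\xi^2-2\varphi_{\widetilde r_k}/\xi$.
\item When $\ell\ge 3$ (i.e.\ $\sigma_k>3$), integrate by parts once more to express the bracket through $\widetilde r_k'''$ differences.
\end{itemize}
In each variant, inserting the Hölder bound of exponent $\sigma_k-\ell$ for $\widetilde r_k^{(\ell)}$ yields $C\xi^{\sigma_k-3}$. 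I expect this step to be the main obstacle. The delicate points are the bookkeeping across the three regimes $[1,2]$, $(2,3]$, $(3,4)$, and justifying the differentiation almost everywhere when the segment $[x-\xi,x+\xi]$ contains $\hat x_k$. There $\widetilde r_k^{(\ell+1)}$ is only integrable, so I would phrase every formula through integrals of $\widetilde r_k^{(\ell)}$ differences rather than pointwise higher derivatives.

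\emph{The separated regime $2d_k\le\xi\le\rho$.} This is direct.
\begin{itemize}
\item Lower bound on distance: $|x\pm\xi-\hat x_k|\ge\xi-d_k\ge\xi/2$, so the sample points avoid $\hat x_k$.
\item Upper bound on distance: $|x\pm\xi-\hat x_k|\le 3\xi/2$.
\item At $x$ itself: $|\widetilde r_k(x)|\le Cd_k^{\sigma_k}\le C\xi^{\sigma_k}$, extended by continuity if $x=\hat x_k$.
\end{itemize}
Then $\psi-P_k''(x)=\varphi_{\widetilde r_k}$ is bounded termwise by $C\xi^{\sigma_k-2}$. Next, $\psi_\xi$ is a combination of $\widetilde r_k'(x\pm\xi)/\xi^2$ and $\widetilde r_k(\cdot)/\xi^3$, and $\psi_{\xi\xi}$ is a combination of $\widetilde r_k''(x\pm\xi)/\xi^2$, $\widetilde r_k'(x\pm\xi)/\xi^3$ and $\widetilde r_k(\cdot)/\xi^4$. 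Each factor $|\widetilde r_k^{(m)}(x\pm\xi)|$ is at most $C\xi^{\sigma_k-m}$, whatever the sign of $\sigma_k-m$, because the two distance bounds above pin $|x\pm\xi-\hat x_k|$ between $\xi/2$ and $3\xi/2$. This gives \eqref{eq:local-separated}, with constants depending only on $C_u$, $\sigma_k$ and $\rho$.
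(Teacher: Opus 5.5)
Your proof is correct. The localization, the H\"older bound \eqref{eq:local-holder}, the absolute continuity for $\sigma_k\ge1$, and the separated bounds \eqref{eq:local-separated} match the paper's brief justification. For the H\"older bound you integrate the $m=1$ bound on each side of $\hat x_k$ and use continuity there. For the separated bounds you differentiate the quotient explicitly, using $|x\pm\xi-\hat x_k|\in[\xi/2,3\xi/2]$.

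The genuine difference is in \eqref{eq:local-psi-prime}. The paper appeals to Lemma~\ref{lem:centered} on intervals avoiding $\hat x_k$ and to the pointwise remainder bounds near $\hat x_k$. In effect this splits $\xi$ relative to $d_k$ rather than splitting by $\sigma_k$:
\begin{itemize}
\item For $\xi<d_k/2$, the segment $[x-\xi,x+\xi]$ stays at distance at least $d_k/2$ from $\hat x_k$. Lemma~\ref{lem:centered}(b), with $\|u^{(4)}\|\le Cd_k^{\sigma_k-4}$ on that segment, gives $|\psi_\xi|\le C\xi d_k^{\sigma_k-4}\le C\xi^{\sigma_k-3}$, because $4-\sigma_k>0$.
\item For $\xi\ge d_k/2$, every sample point lies within $3\xi$ of $\hat x_k$. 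Since $\sigma_k-1\ge0$, the pointwise bounds $|\widetilde r_k'(x\pm\xi)|\le C\xi^{\sigma_k-1}$ and $|\widetilde r_k(\cdot)|\le C\xi^{\sigma_k}$ control the explicit formula for $\psi_\xi$ termwise, almost everywhere.
\end{itemize}
That route uses the pointwise bounds of (A2)(ii) directly and needs no case analysis in $\sigma_k$.

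Your route instead upgrades (A2)(ii) to H\"older continuity of the top derivative $\widetilde r_k^{(\lfloor\sigma_k\rfloor^-)}$. You then feed this into a kernel representation: $K(t)=2t-1$, or $t(1-t)$ against $\widetilde r_k'''$ differences after one more integration by parts. This is uniform in $\xi$ and never involves $d_k$. The cost is the three regimes in $\sigma_k$ and an extra check that $\widetilde r_k\in C^{\lfloor\sigma_k\rfloor^-}(B_\delta(\hat x_k))$. That check does go through: each $\widetilde r_k^{(i)}$ with $i\le\lfloor\sigma_k\rfloor^-$ tends to $0$ from both sides of $\hat x_k$, since $\sigma_k-i>0$. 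By the mean value theorem, the extensions by zero are then genuine derivatives.
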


These estimates follow from Lemma~\ref{lem:centered} on intervals avoiding $\hat x_k$ and the local remainder bounds in \ref{A2} near $\hat x_k$.
The H\"older continuity of $\widetilde r_k$ follows by integrating its derivative bound on either side of $\hat x_k$ and using continuity at that point.
For $\sigma_k\ge1$, this bound gives local Lipschitz continuity of $u$, which implies absolute continuity of $\psi(x,\cdot)$ on intervals bounded away from zero.

\bigskip
By~\eqref{newL}--\eqref{part1}, we rewrite the error function in \eqref{error} as
\begin{equation}\label{error-sep}
e_\Delta^h(x)
=-\frac{c_{1,\alpha(x)}}2
\bigg(E_0(x)+\sum_{i=1}^{N-1}E_i(x)\bigg),\qquad  x\in\Og_h, 
\end{equation}
where
\begin{equation}\label{def:cell-errors}
\begin{aligned}
E_0(x)
&=2\int_0^h[\psi(x,\xi)-\psi(x,h)]\xi^{1-\alpha(x)}\,\md\xi,\\
E_i(x)
&=\int_{\xi_i}^{\xi_{i+1}}
[2\psi(x,\xi)-\psi(x,\xi_i)-\psi(x,\xi_{i+1})]
\xi^{1-\alpha(x)}\,\md\xi,
\qquad 1\le i\le N-1.
\end{aligned}
\end{equation}
Throughout the proof, $x$ denote an arbitrary grid point in $\Og_h$, and $C>0$ denotes a generic constant that is independent of $x$ and $h$.
Set $\rho=\min\{\delta/8,L\}$ and take $0<h<\min\{1,\rho/2,L\}$ sufficiently small that
$$
\begin{aligned}
2h&<\operatorname{dist}(\hat x_k,\overline\Omega),
&&\hat x_k\in\bar X\setminus\overline\Omega,\\
C_\alpha(2h)^\gamma
&\le\tfrac12\bigl(\sigma_k-\alpha(\hat x_k)\bigr),
&&\hat x_k\in\bar X\cap\overline\Omega.
\end{aligned}
$$
These conditions can be satisfied by Assumption~\eqref{A3} and the finiteness of $\bar X$.
For $x\in\overline\Omega$ and $1\le i\le N-1$, define
$$
d_k(x)=|x-\hat x_k|,\qquad
s_{ik}(x) =\operatorname{dist}\bigl(d_k(x),[\xi_i,\xi_{i+1}]\bigr)
=\min_{\xi\in[\xi_i,\xi_{i+1}]}|\xi-d_k(x)|,
$$
that is, $d_k(x)$ measures the distance from $x$ to the low-regularity point $\hat x_k$, and $s_{ik}(x)$ measures the distance from the corresponding interaction distance to a cell $[\xi_i, \xi_{i+1}]$.
In the following, we suppress the dependence on $x$ in $d_k$ and $s_{ik}$ when no confusion arises.

\subsection{\texorpdfstring{Estimate of $E_0(x)$}{Estimate of E0(x)}}
We divide our analysis into two cases.

\medskip
\noindent{\bf Case I: $d_k\geq2h$ for all $k$.}
Let $\zeta\in[x-h,x+h]$.
For every $k$, $|\zeta-\hat x_k|\geq d_k-h\geq d_k/2>0$, and thus the interval $[x-h,x+h]$ contains no low-regularity points and $u\in C^4([x-h,x+h])$.
Since the balls $B_\delta(\hat x_k)$ are pairwise disjoint, $\zeta$ lies in at most one of them.
If $\zeta\in B_\delta(\hat x_k)$ for some $k$, then, since $P_k$ has degree at most three, $u^{(4)}(\zeta)=\widetilde r_k^{(4)}(\zeta)$.
By Assumption~\eqref{A2}\ref{A2ii},
\beas
|u^{(4)}(\zeta)|
\leq C|\zeta-\hat x_k|^{\sigma_k-4}
\leq C d_k^{\sigma_k-4},
\eeas
since $\sigma_k<4$.
Otherwise, if $\zeta$ lies outside all these neighborhoods, Assumption~\eqref{A2}\ref{A2i} gives $|u^{(4)}(\zeta)|\leq C$.
Combining these bounds yields
\beas
\|u^{(4)}\|_{L^\infty([x-h,x+h])}
\leq C\biggl(1+\sum_{1 \le k \le K} d_k^{\sigma_k-4}\biggr).
\eeas
By it and Lemma~\ref{lem:centered}, we obtain
\beas
\begin{aligned}[b]
|E_0(x)|
&\leq 2\int_0^h
|\psi(x,\xi)-\psi(x,h)|
\xi^{1-\alpha(x)}\,\md\xi\\
&\leq C \|u^{(4)}\|_{L^\infty([x-h,x+h])} 
\int_0^h(\xi^2+h^2)\xi^{1-\alpha(x)}\,\md\xi\\
&\leq C h^{4-\alpha(x)}
\biggl(1+\sum_{1 \le k \le K} d_k^{\sigma_k-4}\biggr),
\end{aligned}
\eeas
where we used $2-\alpha(x)\geq2-\alpha_+>0$.
For $\hat x_k \in \overline \Omega$, when $d_k\geq2h$, we have the estimate
\bea\label{temp-d}
\begin{aligned}
h^{4-\alpha(x)}d_k^{\sigma_k-4}
=h^2d_k^{\sigma_k-2-\alpha(x)}
\left(\frac{h}{d_k}\right)^{2-\alpha(x)} \leq Ch^2d_k^{\sigma_k-\ap(\hat{x}_k)-2}
\leq Ch^{\min\{\sigma_k-\ap(\hat{x}_k),2\}},
\end{aligned}
\eea
where we used the estimate in \eqref{eq:order-comparison1} with $t=d_k$.
For $\hat x_k\in\bar X\cap\overline{\Og}$, Assumption~\eqref{A1} gives
\begin{equation}\label{eq:order-comparison1}
\begin{gathered}
t^{\ap(\hat{x}_k)-\ap(x)}\leq\exp\bigl(C_\ap 2^\gamma t^\gamma|\log t|\bigr)\leq C, \qquad 
d_k\leq2t,\qquad 0<t\leq L,
\end{gathered}
\end{equation}
where $C$ is independent of $x$ and $t$, since $t^\gamma|\log t|$ is bounded on $(0,L]$.
Combine the above discussion leads to
\begin{equation}\label{E0-caseI}
|E_0(x)|
\leq C\Big[h^2+\sum_{\hat{x}_k\in\bar X\cap\overline{\Og}}h^{\min\{\sigma_k-\ap(\hat{x}_k),2\}}\Big],
\end{equation}
i.e., the contributions associated with $\hat x_k\in\bar X\cap\overline{\Og}$ are $O(h^{\min\{\sigma_k-\ap(\hat x_k),2\}})$, while those associated with $\hat x_k\in\bar X\setminus\overline{\Og}$ are $O(h^2)$.

\medskip
\noindent{\bf Case II: $d_k<2h$ for some $k$.}
Since $h<\delta/16$ and the neighborhoods $B_\delta(\hat x_k)$ are pairwise disjoint, this index is unique, and $[x-h,x+h]\subset B_\delta(\hat x_k)$.
Our choice of $h$ ensures that $\hat x_k\in\overline{\Omega}$ and
\beas
\sigma_k-\alpha(x) \ge 
\sigma_k-\alpha(\hat x_k) - C_{\alpha}(2h)^\gamma 
\geq \frac12\bigl(\sigma_k-\alpha(\hat x_k)\bigr)>0.
\eeas
Lemma~\ref{lem:local} gives
\bea\label{est:psi-near}
|\psi(x,\xi)|\leq C\xi^{\sigma_k-2},
&&\quad 0<\sigma_k<1,\\
\label{est:psi1-near}
|\partial_\xi\psi(x,\xi)|\leq C\xi^{\sigma_k-3},
&&\quad 1\leq\sigma_k<4.
\eea
The first inequality holds for $0<\xi\leq h$, and the second holds for almost every $\xi\in(0,h)$.
For $\sigma_k\geq1$, Lemma~\ref{lem:local} also gives absolute continuity of $\psi(x,\cdot)$ on every $[\underline{a},h]$ with $0<\underline{a}<h$.
We consider the two ranges of $\sigma_k$ separately.

If $0<\sigma_k<1$, the triangle inequality and \eqref{est:psi-near} give
$$
\begin{aligned}
|E_0(x)|
&\leq 2\int_0^h
\bigl(|\psi(x,\xi)|+|\psi(x,h)|\bigr)
\xi^{1-\alpha(x)}\,\md\xi\\
&\leq C\int_0^h
\bigl(\xi^{\sigma_k-2}+h^{\sigma_k-2}\bigr)
\xi^{1-\alpha(x)}\,\md\xi \leq C h^{\sigma_k-\alpha(x)}.
\end{aligned}
$$
If $1\leq\sigma_k<4$, the fundamental theorem for absolutely continuous functions $\psi(x, \cdot)$ and Tonelli's theorem give
$$
\begin{aligned}[b]
|E_0(x)|
&\leq 2\int_0^h
\bigg(\int_\xi^h
|\partial_\eta\psi(x,\eta)|\,\md\eta\bigg)
\xi^{1-\alpha(x)}\,\md\xi\\
&=2\int_0^h
|\partial_\eta\psi(x,\eta)|
\bigg(\int_0^\eta\xi^{1-\alpha(x)}\,\md\xi\bigg)
\md\eta\\
&\leq C\int_0^h
\eta^{\sigma_k-1-\alpha(x)}\,\md\eta \leq C h^{\sigma_k-\alpha(x)},
\end{aligned}
$$
where we used \eqref{est:psi1-near} and the positive lower bounds on $2-\alpha(x)$ and $\sigma_k-\alpha(x)$.
Finally, since $d_k<2h$, the H\"older continuity of $\alpha$ gives
\begin{equation}\label{eq:order-comparison2}
h^{\sigma_k-\alpha(x)}
\leq h^{\sigma_k-\alpha(\hat x_k)}
\exp\bigl(C_\alpha(2h)^\gamma\log(1/h)\bigr)
\leq C h^{\sigma_k-\alpha(\hat x_k)},
\end{equation}
where we used the boundedness of $h^\gamma\log(1/h)$.
Therefore,
\begin{equation}\label{E0-caseII}
|E_0(x)|\leq C h^{\sigma_k-\alpha(\hat x_k)},
\qquad 0<\sigma_k<4.
\end{equation}

Combining the estimates for Case~I in \eqref{E0-caseI} and Case~II in \eqref{E0-caseII}, we obtain
\begin{equation}\label{est:combined-E0}
\max_{x\in \Og_h}|E_0(x)|
\leq C\bigg[h^2+\sum_{\hat{x}_k\in\overline{\Og}\cap\bar{X}}h^{\min\{\sigma_k-\ap(\hat{x}_k),2\}}\bigg].
\end{equation}

\subsection{\texorpdfstring{Estimate of $E_i(x)$}{Estimate of Ei(x)}}
To estimate $E_i(x)$, we distinguish whether the two intervals $[x+\xi_i,x+\xi_{i+1}]$ and $[x-\xi_{i+1},x-\xi_i]$ contain any low-regularity points.
If neither interval intersects $\bar{X}$, the smoothness of $\psi(x,\xi)$ allows us to derive the general cellwise estimate \eqref{pre_Ei}.
Otherwise, we use the local estimate in Lemma \ref{A2} to control the contributions involving low-regularity points.

For $1\le i\le N-1$, if $([x+\xi_i,x+\xi_{i+1}]\cup[x-\xi_{i+1},x-\xi_i])\cap\bar{X}=\emptyset$, then Assumption~\eqref{A2} implies that $\psi(x,\cdot)\in C^2([\xi_i,\xi_{i+1}])$.
Taylor expansion about the midpoint $m_i=\xi_i+h/2$ gives
$$
2\psi(x,\xi)-\psi(x,\xi_i)-\psi(x,\xi_{i+1})
=2(\xi-m_i)\psi_\xi(x,m_i)+R_i(\xi)
$$
with $|R_i(\xi)|\leq Ch^2\|\psi_{\xi\xi}(x,\cdot)\|_{L^\infty([\xi_i, \xi_{i+1}])}$.
Since $\int_{\xi_i}^{\xi_{i+1}}(\xi-m_i)\,\md\xi=0$, the mean value theorem yields
$$
\begin{aligned}
\left|\int_{\xi_i}^{\xi_{i+1}}(\xi-m_i)\xi^{1-\alpha(x)}\,\md\xi\right|
& =\left|\int_{\xi_i}^{\xi_{i+1}}(\xi-m_i)
\bigl[\xi^{1-\alpha(x)}-m_i^{1-\alpha(x)}\bigr]\,\md\xi\right|
\\
&\leq C\xi_i^{-\alpha(x)}\int_{\xi_i}^{\xi_{i+1}}|\xi-m_i|^2\,\md\xi 
\leq Ch^3\xi_i^{-\alpha(x)}.
\end{aligned}
$$
Since $\xi_i\le\xi\le2\xi_i$ for $\xi\in[\xi_i,\xi_{i+1}]$ with $i\ge1$, we have
$$
\int_{\xi_i}^{\xi_{i+1}}\xi^{1-\alpha(x)}\,\md\xi
\leq Ch\xi_i^{1-\alpha(x)}.
$$
Substituting the midpoint expansion into the definition of $E_i(x)$ and using these two bounds gives
\begin{equation}\label{pre_Ei}
|E_i(x)|
\leq Ch^3\left[
\xi_i^{-\alpha(x)}\|\psi_\xi(x,\cdot)\|_{L^\infty([\xi_i,\xi_{i+1}])}
+\xi_i^{1-\alpha(x)}\|\psi_{\xi\xi}(x,\cdot)\|_{L^\infty([\xi_i,\xi_{i+1}])}
\right].
\end{equation} 
If either interval $[x+\xi_i,x+\xi_{i+1}]$ or $[x-\xi_{i+1},x-\xi_i]$ intersects $\bar X$, we use the local bounds in Lemma~\ref{lem:local} to estimate $E_i(x)$, as detailed below.

To facilitate our discussion, we divide the terms $E_i$ (for $1\le i\le N-1$) into two groups and introduce the index sets
\beas
S_{\rm near}
=\{i\in\{1,\dots,N-1\}\mid \xi_{i+1}\le\rho\},
\qquad
S_{\rm far}
=\{i\in\{1,\dots,N-1\}\mid \xi_{i+1}>\rho\},
\eeas
which classify the cells $[\xi_i,\xi_{i+1}]$ according to their interaction distance from the point $x$.

\subsubsection{\texorpdfstring{Estimates of $E_i(x)$ for $i\in S_{\rm near}$}{Estimates of Ei(x) for i in S near}}
For $i\in S_{\rm near}$, we have $x\pm\xi\in[x-\rho,x+\rho]$ for every $\xi\in[\xi_i,\xi_{i+1}]$.
As before, we divide the analysis into two cases.

\medskip
\noindent{\bf Case I: $d_k\geq2\rho$ for all $k$. }
For $\zeta\in[x-\rho,x+\rho]$, we have $|\zeta-\hat x_k|\geq d_k-\rho\geq\rho$ for every $k$.
Thus, Assumption~\eqref{A2} gives $u\in C^4([x-\rho,x+\rho])$ and $\|u^{(4)}\|_{L^\infty([x-\rho,x+\rho])}\leq C$, uniformly in $x$.
By Lemma~\ref{lem:centered} (b), we obtain
$$
|\psi_\xi(x,\xi)|\leq C\xi,
\qquad
|\psi_{\xi\xi}(x,\xi)|\leq C,
\qquad 0<\xi\leq\rho.
$$
Substituting these bounds into \eqref{pre_Ei} and using $\xi_{i+1}\leq2\xi_i$ gives
\beas\label{Ei-near-caseI}
|E_i(x)|\leq Ch^3\xi_i^{1-\alpha(x)}.
\eeas
Consequently,
\bea\label{est:Ei-I}
\begin{aligned}[b]
\sum_{i\in S_{\rm near}}|E_i(x)|
&\leq Ch^3
\sum_{i\in S_{\rm near}}
\xi_i^{1-\alpha(x)} \\
&\leq Ch^2\int_0^\rho\xi^{1-\alpha(x)}\,\md\xi \leq Ch^2, \qquad \mbox{if $d_k \ge 2\rho$ for all $k$}.
\end{aligned}
\eea

\paragraph{Case II: $d_k<2\rho$ for some $k$.} Since the neighborhoods $B_\delta(\hat x_k)$ are pairwise disjoint and $\rho\leq\delta/8$, there is only one such index.
Moreover,
$$
|x\pm\xi-\hat x_k|
\leq d_k+\rho<3\rho<\delta,
$$
so $[x-\rho,x+\rho]\subset B_\delta(\hat x_k)$.
We further distinguish between $d_k\leq2h$ and $d_k>2h$.
\begin{itemize}
\item[{\bf (a)}] $d_k\leq2h$.
For cells with $\xi_i\leq4h$, we have $h\leq\xi_i\leq\xi_{i+1}\leq5h$.
Since $P_k$ has degree at most three, we can write
$$
P_k''(x) = \frac{P_k(x+\xi)-2P_k(x)+P_k(x-\xi)}{\xi^2}.
$$
Since $d_k\le2h$ and $\xi_{i+1}\le5h$, the points $x\pm\xi$ lie in $[\hat x_k-7h,\hat x_k+7h]$ for all $\xi\in[\xi_i,\xi_{i+1}]$.
Then, for $\xi\in[\xi_i,\xi_{i+1}]$, we obtain
\beas\label{psi-P}
|\psi(x,\xi)-P_k''(x)|
=\frac{1}{\xi^2}
\bigl|
\widetilde r_k(x+\xi)
-2\widetilde r_k(x)+\widetilde r_k(x-\xi)
\bigr|
\le Ch^{\sigma_k}\xi^{-2}
\eeas
by Assumption~\eqref{A2}\ref{A2ii}.
Using this bound and canceling the polynomial contribution gives
\bea\label{est:case2a-1}
|E_i(x)|
&\leq&\int_{\xi_i}^{\xi_{i+1}}
|2\psi(x,\xi)-\psi(x,\xi_i)-\psi(x,\xi_{i+1})|
\xi^{1-\alpha(x)}\,\md\xi\nn\\
&=&\int_{\xi_i}^{\xi_{i+1}}
\Bigl|
2[\psi(x,\xi)-P_k''(x)]
-[\psi(x,\xi_i)-P_k''(x)]
-[\psi(x,\xi_{i+1})-P_k''(x)]
\Bigr|
\xi^{1-\alpha(x)}\,\md\xi\nn\\
&\le& Ch^{\sigma_k}\int_{\xi_i}^{\xi_{i+1}}
\bigg(\frac{2}{\xi^2}
+\frac{1}{\xi_i^2}
+\frac{1}{\xi_{i+1}^2}\bigg)\xi^{1-\alpha(x)}\,\md\xi\nn\\
&\leq& Ch^{\sigma_k-2}
\int_{\xi_i}^{\xi_{i+1}}
\xi^{1-\alpha(x)}\,\md\xi \leq Ch^{\sigma_k-\alpha(x)},
\eea
where we used $\xi,\xi_i,\xi_{i+1}\ge h$ in the penultimate inequality and $h\le\xi\le5h$ in the last inequality.

For cells with $\xi_i>4h$, we have $\xi\geq\xi_i>4h\geq2d_k$ for any $\xi \in [\xi_i, \xi_{i+1}]$.
Thus, neither interval $[x+\xi_i,x+\xi_{i+1}]$ nor $[x-\xi_{i+1},x-\xi_i]$ contains low-regularity points.
Using \eqref{pre_Ei} together with the bounds in \eqref{eq:local-separated} and $\xi_i\le\xi\le2\xi_i$, we obtain
\begin{equation}\label{est:case2a-2}
\begin{aligned}[b]
|E_i(x)|
&\leq Ch^3\left[
\xi_i^{-\alpha(x)}\xi_i^{\sigma_k-3}
+\xi_i^{1-\alpha(x)}\xi_i^{\sigma_k-4}
\right]\\
&\leq Ch^3\xi_i^{\sigma_k-3-\alpha(x)}.
\end{aligned}
\end{equation}
Note that there are at most four cells with $\xi_i\leq4h$.
Summing \eqref{est:case2a-1} and \eqref{est:case2a-2} over the two cell groups gives,
\begin{equation}\label{est:case2a}
\begin{aligned}
\sum_{i\in S_{\rm near}}|E_i(x)|
&\leq C\bigg[ 
h^{\sigma_k-\alpha(\hat{x}_k)}
+h^2\int_h^\rho
\xi^{\sigma_k-3-\alpha(\hat{x}_k)}\,\md\xi
\bigg] \\
& \leq C\begin{cases}
h^{\sigma_k-\ap(\hat{x}_k)}, &0<\sigma_k-\ap(\hat{x}_k)<2, \\
h^2\big(1+\log(1/h)\big), &\sigma_k-\ap(\hat{x}_k)=2, \\
h^2, &\sigma_k-\ap(\hat{x}_k) >2.
\end{cases}, \qquad  \mbox{if $d_k\leq2h$}.
\end{aligned}
\end{equation}
Here we used the estimate in \eqref{eq:order-comparison2} for $h^{\sigma_k-\ap(x)}$ and \eqref{eq:order-comparison1} with $t=\xi$ for the integral, since $d_k\le2h\le2\xi$ for $h\le\xi\le\rho$.
The final bound follows by direct integration.
In particular, when $\sigma_k-\alpha(\hat x_k)=2$, the integral equals $\log(\rho/h)$, giving the stated logarithmic factor.

\item[{\bf (b)}] $d_k>2h$.
We divide the cells into three groups: those with $\xi_{i+1}<d_k/2$, those with $\xi_i>2d_k$, and those intersecting $[d_k/2,2d_k]$.

For cells with $\xi_{i+1}<d_k/2$, every point $\xi\in[x-\xi_{i+1},x+\xi_{i+1}]$ satisfies ${d_k}/{2}\leq|\xi-\hat x_k| \leq {3d_k}/{2}$.
Thus, $[x-\xi_{i+1},x+\xi_{i+1}] \subset B_\delta(\hat x_k)$ and contains no low-regularity points, so $u\in C^4([x-\xi_{i+1},x+\xi_{i+1}])$.
Since $P_k^{(4)}=0$, Assumption~\eqref{A2}\ref{A2ii} gives
\beas
\|u^{(4)}\|_{L^\infty([x-\xi_{i+1},x+\xi_{i+1}])}
\leq Cd_k^{\sigma_k-4}.
\eeas
Lemma~\ref{lem:centered}(b) therefore yields
$$
|\psi_\xi(x,\xi)|\leq C\xi d_k^{\sigma_k-4},
\qquad
|\psi_{\xi\xi}(x,\xi)|\leq Cd_k^{\sigma_k-4}.
$$ 
Substituting these bounds into \eqref{pre_Ei}, we obtain
\begin{equation}
\label{caseII(b)-1}
|E_i(x)|
\leq Ch^3d_k^{\sigma_k-4}\xi_i^{1-\alpha(x)}.
\end{equation}

For cells with $\xi_i>2d_k$, we have ${\xi}/{2}\leq|x\pm\xi-\hat x_k| \leq {3\xi}/{2}$.
Thus, neither interval $[x+\xi_i,x+\xi_{i+1}]$ nor $[x-\xi_{i+1},x-\xi_i]$ contains low-regularity points.
Using the bounds in \eqref{eq:local-separated} to \eqref{pre_Ei} yields
\begin{equation}
\label{caseII(b)-2}
|E_i(x)|\leq Ch^3\xi_i^{\sigma_k-3-\alpha(x)}.
\end{equation}
This contribution is zero if $2d_k\geq\rho$.

The remaining cells intersect $[d_k/2,2d_k]$.
Since $h<d_k/2$ and $\xi_i\geq h$, these cells satisfy $d_k/4\leq\xi\leq3d_k$.
Assume $\hat{x}_k>x$, otherwise interchange $x+\xi$ and $x-\xi$.
Using the polynomial cancellation, write
\begin{equation}\label{psi-P2}
    \psi(x,\xi) - P_{k}'' = \fl{\tilde{r}_{k}(x+\xi)}{\xi^2} + \fl{\tilde{r}_{k}(x-\xi)-2\tilde{r}_{k}(x)}{\xi^2}.
\end{equation}
Since $|x-\xi-\hat{x}_k|=d_k+\xi\asymp d_k$, Assumption~\eqref{A2}\ref{A2ii} gives
\begin{equation}\label{est:IIb-smooth-quotient}
\left|
\partial_\xi^m
\left(\frac{\widetilde r_k(x-\xi)-2\widetilde r_k(x)}{\xi^2}\right)
\right|
\leq Cd_k^{\sigma_k-2-m},
\qquad m=1,2.
\end{equation}
We further distinguish between $s_{ik}\geq h$ and $s_{ik}<h$.

\smallskip
If $s_{ik}\geq h$, then $s_{ik}\leq|\xi-d_k|\leq s_{ik}+h\leq2s_{ik}$.
Differentiating the first quotient in \eqref{psi-P2} and using Lemma \eqref{A2} yields
$$
\left|
    \partial_{\xi}^m \left(\fl{\tilde{r}_{k}(x+\xi)}{\xi^2}\right)
\right| \leq C\sum_{j=0}^{m}d_{k}^{-2-m+j}s_{ik}^{\sigma_k-j}\leq Cd_{k}^{-2}s_{ik}^{\sigma_k-m},\qquad m=1,2,
$$
where the last inequality uses $s_{ik}\leq Cd_k$.
Applying \eqref{pre_Ei} to both quotients in \eqref{psi-P2} and using linearity, we obtain
\bea\label{caseII(b)-3}
\begin{aligned}[b]
    |E_{i}(x)| &\leq Ch^3\left[ d_{k}^{-2-\ap(x)} s_{ik}^{\sigma_k-1} + d_{k}^{-1-\ap(x)}s_{ik}^{\sigma_k-2}+d_k^{\sigma_k-3-\ap(x)}\right] \\
    & \leq Ch^3 \left[ d_{k}^{-1-\ap(x)} s_{ik}^{\sigma_k-2} +d_k^{\sigma_k-3-\ap(x)}\right].
\end{aligned}
\eea

If $s_{ik}<h$, then $|\xi-d_k|<2h$ for any $\xi \in [\xi_i, \xi_{i+1}]$, so
\begin{equation}\label{est:IIb-near-quotient}
    \left|\fl{\tilde{r}(x+\xi)}{\xi^2}\right| \leq Ch^{\sigma_k}d_k^{-2}.
\end{equation}
Using the triangle inequality and \eqref{est:IIb-near-quotient} for the first quotient in \eqref{psi-P2}, and \eqref{pre_Ei} with \eqref{est:IIb-smooth-quotient} for the second, we obtain
\begin{equation}
\label{caseII(b)-4}
\begin{aligned}[b]
|E_i(x)|
&\leq Ch^{\sigma_k}d_k^{-2}
\int_{\xi_i}^{\xi_{i+1}}
\xi^{1-\alpha(x)}\,\md\xi+Ch^3d_k^{\sigma_k-3-\ap(x)}\\
&\leq C\left[
h^{\sigma_k+1}d_k^{-1-\alpha(x)}
+h^3d_k^{\sigma_k-3-\alpha(x)}
\right].
\end{aligned}
\end{equation}

There are $O(d_k/h)$ cells intersecting $[d_k/2,2d_k]$.
For each integer $\ell \ge1$, at most two cells satisfy $\ell h\le s_{ik}<(\ell +1)h$, and at most four cells satisfy $s_{ik}<h$.
Summing the estimates~\eqref{caseII(b)-1}--\eqref{caseII(b)-4} over these four cell groups therefore gives
\begin{equation}\label{est:Ei:small-far}
\begin{aligned}[b]
\sum_{i\in S_{\rm near}}|E_i(x)|
\leq C\bigg[
&h^2d_k^{\sigma_k-2-\alpha(x)}
+h^2\int_{2d_k}^{\rho}
\xi^{\sigma_k-3-\alpha(x)}\,\md\xi\\
&+h^2d_k^{-1-\alpha(x)}
\int_h^{3d_k}s^{\sigma_k-2}\,\md s
+h^{\sigma_k+1}d_k^{-1-\alpha(x)}
\bigg].
\end{aligned}
\end{equation}

Next, we further simplify \eqref{est:Ei:small-far} by considering separately the cases $\hat x_k\in\bar X\cap\overline{\Og}$ and $\hat x_k\in\bar X\setminus\overline{\Og}$.
As discussed, $\hat x_k$ is the unique low-regularity point satisfying $d_k<2\rho$.
If $\hat x_k\in\bar X\cap\overline{\Og}$, we use \eqref{eq:order-comparison1} with $t=\xi$ to obtain
\beas\label{est:combined-IIb-uniform}
h^2\int_{2d_k}^{\rho}\xi^{\sigma_k-3-\alpha(x)}\,\md\xi
\le Ch^2\int_h^\rho
\xi^{\sigma_k-\ap(\hat x_k)-3}\,\md\xi.
\eeas
This integral vanishes from~\eqref{est:Ei:small-far} when $2d_k\ge\rho$.
For the other integral term, we use \eqref{eq:order-comparison1} with $t=d_k$ to obtain
\beas
h^2d_k^{-1-\alpha(x)}
\int_h^{3d_k}s^{\sigma_k-2}\,\md s \le
C\bigl(h^{\sigma_k-\ap(\hat x_k)}
+h^2d_k^{\sigma_k-\ap(\hat x_k)-2}\bigr)
\le C\bigl(h^2+h^{\sigma_k-\ap(\hat x_k)}\bigr).
\eeas
Here, we write $s^{\sigma_k-2} =s^{\ap(\hat x_k)}s^{\sigma_k-\ap(\hat x_k)-2}$ and use the esitmate $s^{\sigma_k-\ap(\hat x_k)-2} \le C\bigl( h^{\sigma_k-\ap(\hat x_k)-2} +d_k^{\sigma_k-\ap(\hat x_k)-2} \bigr)$ for $s\in[h,3d_k]$, while the final inequality follows from $d_k\ge2h$ and $d_k<2\rho$.
For the two nonintegral terms, the order comparison with $t=d_k$ and $2h<d_k<2\rho$ gives
\begin{align*}
    h^2 d_{k}^{\sigma_k-2-\ap(x)} &\leq Ch^2 d_{k}^{\sigma_k-\ap(\hat{x}_k)-2} \leq C(h^2+ h^{\sigma_k-\ap(\hat{x}_k)}), \\
    h^{\sigma_k+1}d_k^{-1-\ap(x)} &\leq Ch^{\sigma_k-\ap(\hat{x}_k)}\bigg(\fl{h}{d_k}\bigg)^{1+\ap(\hat{x}_k)} \leq Ch^{\sigma_k-\ap(\hat{x}_k)}.
\end{align*} 
Thus, for $\hat x_k\in\bar X\cap\overline{\Og}$, \eqref{est:Ei:small-far} reduces to
\begin{equation}\label{est:combined-small-interior}
\sum_{i\in S_{\rm near}}|E_i(x)|
\le C\biggl[
h^2+h^{\sigma_k-\ap(\hat x_k)}
+h^2\int_h^\rho
\xi^{\sigma_k-\ap(\hat x_k)-3}\,\md\xi
\biggr],
\end{equation}
which yields the same bounds as \eqref{est:case2a}; we omit the details for brevity.

For $\hat{x}_k\in\bar{X}\backslash\overline{\Og}$, we have $d_k\geq\operatorname{dist}(\hat{x}_k,\overline{\Og})>0$.
Hence, the first two terms in \eqref{est:Ei:small-far} are bounded by $Ch^2$, and the factors involving $d_k$ in the remaining two terms are uniformly bounded.
Using $3d_k<\delta$, we obtain
\begin{equation}\label{est:combined-small-exterior}
\begin{aligned}
    \sum_{i\in S_{\rm near}} |E_i(x)|
&\leq C\bigg[h^2+h^2\int_h^\delta s^{\sigma_k-2}\,\md s+h^{\sigma_k+1}\bigg] \\
&\leq C\begin{cases}
    h^{\sigma_k+1}, & 0<\sigma_k<1, \\
    h^2\big(1+\log(1/h)\big), &\sigma_k=1, \\
    h^2, &\sigma_k>1.
\end{cases}
\end{aligned}
\end{equation} 
\end{itemize}

\subsubsection{\texorpdfstring{Estimate of $E_i(x)$ for $i\in S_{\rm far}$}{Estimate of Ei(x) for i in S far}}

For $i\in S_{\rm far}$, $\xi_{i+1}>\rho$ and $h<\rho/2$ imply $\xi_i>\rho/2$.
Thus, the weight $\xi^{1-\ap(x)}$ and $\xi^{-1}$ are uniformly bounded on these cells.
We distinguish two cases according to $s_{ik}$.

\medskip
\noindent{\bf Case I: $s_{ik}\geq h$ for all $k$.}
For $\xi\in[\xi_i,\xi_{i+1}]$, we have $s_{ik}\leq|\xi-d_k| \leq s_{ik}+h\leq2s_{ik}$.
Neither interval $[x+\xi_i,x+\xi_{i+1}]$ nor $[x-\xi_{i+1},x-\xi_i]$ contains a low-regularity point.
Differentiating $\psi$ and applying Assumption~\eqref{A2} gives
\beas
\|\psi_{\xi}(x,\cdot)\|_{L^\infty([\xi_i,\xi_{i+1}])}
+\|\psi_{\xi\xi}(x,\cdot)\|_{L^\infty([\xi_i,\xi_{i+1}])}
\leq C\bigg(1+\sum_{1\le k\le K}s_{ik}^{\sigma_k-2}\bigg).
\eeas
Thus, \eqref{pre_Ei} yields
\begin{equation}\label{caseIII-1}
|E_i(x)|\leq Ch^3\bigg(1+\sum_{1\le k \le K}s_{ik}^{\sigma_k-2}\bigg).
\end{equation}
For a fixed $k$, we split the sum over $i\in S_{\rm far}$ according to whether $h\le s_{ik}<\delta$ or $s_{ik}\ge\delta$.
For $h\le s_{ik}<\delta$, at most two cells lie in each distance band $[\ell h,(\ell+1)h)$.
For $s_{ik}\ge\delta$, the summands are uniformly bounded.
Hence, using $N=L/h$ we obtain
\begin{equation}\label{eq:far-distance-sum}
h^3\sum_{\substack{i\in S_{\rm far}\\s_{ik}\ge h}}
s_{ik}^{\sigma_k-2}
=
h^3\sum_{\substack{i\in S_{\rm far}\\h\le s_{ik}<\delta}}
s_{ik}^{\sigma_k-2}
+
h^3\sum_{\substack{i\in S_{\rm far}\\s_{ik}\ge\delta}}
s_{ik}^{\sigma_k-2}\\
\le Ch^2\left[
1+\int_h^\delta \xi^{\sigma_k-2}\,\md\xi
\right].
\end{equation} 
Here the last partial distance band is absorbed into the integral over $[\delta/2,\delta]$, since $h<\delta/16$.

\paragraph{Case II: $s_{ik}<h$ for some $k$.} Integrating the first-derivative bounds in Assumption~\eqref{A2} on either side of each low-regularity point and using continuity gives
\beas
|u(\zeta)-u(\eta)|
\leq C\bigg(h+\sum_{\sigma_k<1}h^{\sigma_k}\bigg),
\qquad \zeta,\eta\in\Omega_L,\quad |\zeta-\eta|\leq h.
\eeas 
Since $u$ is bounded and $\xi^{-2}$ has a uniformly bounded derivative for $\xi\geq\rho/2$, it follows that,
$$
\begin{aligned}
|\psi(x,\xi)-\psi(x,\eta)|
&\leq C\Bigl[|u(x+\xi)-u(x+\eta)|
+|u(x-\xi)-u(x-\eta)|+|\xi-\eta|\Bigr]\\
&\leq C\bigg(h+\sum_{\sigma_k<1}h^{\sigma_k}\bigg),
\end{aligned}
$$
for $\xi,\eta\in[\xi_i,\xi_{i+1}]$.
Applying this bound with $\eta=\xi_i$ and $\eta=\xi_{i+1}$ in the definition of $E_i(x)$, and using the boundedness of the weight, gives
\begin{equation}\label{caseIII-2-modulus}
\begin{aligned}
|E_i(x)|
&\leq C\int_{\xi_i}^{\xi_{i+1}}
\Bigl[|\psi(x,\xi)-\psi(x,\xi_i)|
+|\psi(x,\xi)-\psi(x,\xi_{i+1})|\Bigr]\,\md\xi\\
&\leq C\bigg(h^2+\sum_{\sigma_k<1}h^{\sigma_k+1}\bigg).
\end{aligned}
\end{equation}

For each $k$, at most four cells satisfy $s_{ik}<h$, so there are at most $4K$ Case II cells, uniformly in $x$ and $h$.
Summing \eqref{caseIII-2-modulus} over these cells gives
\begin{equation}\label{caseIII-2-modulus-sum}
\sum_{\substack{i\in S_{\rm far}\\\exists k:\ s_{ik}<h}}|E_i(x)|
\leq C\bigg(h^2+\sum_{\sigma_k<1}h^{\sigma_k+1}\bigg).
\end{equation} 
Combining \eqref{caseIII-1} with \eqref{eq:far-distance-sum} and \eqref{caseIII-2-modulus-sum} gives
\begin{equation}\label{est:combined-large}
\begin{aligned}
\sum_{i\in S_{\rm far}}|E_i(x)|
&\leq C\bigg[h^2+h^2\sum_{1 \le k\le K}\int_h^\delta s^{\sigma_k-2}\,\md s
+\sum_{\sigma_k<1}h^{\sigma_k+1}\bigg]\\
&\leq C\bigg[h^2+\sum_{\sigma_k<1}h^{\sigma_k+1}
+ \sum_{\sigma_k=1}h^2\log(1/h) \bigg].
\end{aligned}
\end{equation}
The last inequality follows by direct integration, with all contributions for $\sigma_k>1$ absorbed into $Ch^2$.
Empty sums are understood to be zero.

\subsubsection{Uniform error estimate}

By \eqref{error-sep} and the uniform boundedness of $c_{1,\ap(x)}$, we obtain
\begin{equation}\label{est:combined-bound}
\begin{aligned}[b]
\|e_\Dt^h\|_{\ell^\infty(\Og_h)}
&=\max_{x\in\Og_h}|e_\Dt^h(x)|\\
&\le \max_{x\in\Og_h}
|E_0(x)|
+\max_{x\in\Og_h}\Big\{\sum_{i\in S_{\rm near}}|E_i(x)|\Big\}
+\max_{x\in\Og_h} \Big\{\sum_{i\in S_{\rm far}}|E_i(x)|\Big\}.
\end{aligned}
\end{equation}
For each $x\in\Og_h$, the first-cell term $E_0(x)$ is bounded by \eqref{est:combined-E0}, and the sum of the far-field terms is bounded by \eqref{est:combined-large}.
The summation of near-field terms are bounded by \eqref{est:Ei-I}, \eqref{est:case2a}, \eqref{est:combined-small-interior}, or \eqref{est:combined-small-exterior}, according to the cases considered above.

Recall the definition of $\mu$ in \eqref{mu} that
\begin{equation*}
    \mu:=\min\Bigl\{
\min_{\hat x_k\in\bar X\cap\overline\Omega}
\bigl(\sigma_k-\alpha(\hat x_k)\bigr),\
\min_{\hat x_k\in\bar X\setminus\overline\Omega}
(\sigma_k+1)
\Bigr\}.
\end{equation*}
If $0<\mu<2$, all terms without a logarithmic factor are bounded by $Ch^\mu$, and
$$
h^2\bigl(1+\log(1/h)\bigr)
=h^\mu h^{2-\mu}\bigl(1+\log(1/h)\bigr)
\leq Ch^\mu.
$$
If $\mu=2$, all terms are bounded by $Ch^2(1+\log(1/h))$.
If $\mu>2$, every interior gap exceeds two and every exterior point has $\sigma_k>1$, so all terms are bounded by $Ch^2$.
Since $\bar X$ is finite and all constants are independent of $x$ and $h$, these bounds hold uniformly on $\Og_h$.

Combining the above discussion with \eqref{est:combined-bound} and the estimates in \eqref{est:combined-E0}, \eqref{est:combined-large}, \eqref{est:Ei-I}, \eqref{est:case2a}, \eqref{est:combined-small-interior}, and \eqref{est:combined-small-exterior}, we obtain
\begin{equation}\label{est:combined-bound1}
\begin{aligned}[b]
\|e_\Dt^h\|_{\ell^\infty(\Og_h)} \leq C\begin{cases}
h^\mu, & 0<\mu<2,\\
h^2\bigl(1+\log(1/h)\bigr), & \mu=2,\\
h^2, & \mu>2.
\end{cases}
\end{aligned}
\end{equation} 
For sufficiently small $h$, we have $1+\log(1/h)\leq2\log(1/h)$, which gives the critical rate stated in Theorem~\ref{thm}.
This completes the proof.
\hfill$\square$


\begin{thebibliography}{99}

\bibitem{Alzahrani2022}
H.~Alzahrani, G.~Turkiyyah, O.~Knio, and D.~Keyes.
\newblock Space-fractional diffusion with variable order and diffusivity: discretization and direct solution strategies.
\newblock {\em Commun. Appl. Math. Comput.}, 4(4):1416--1440, 2022.

\bibitem{Bass1988}
R.~F. Bass.
\newblock Uniqueness in law for pure jump {Markov} processes.
\newblock {\em Probab. Theory Related Fields}, 79(2):271--287, 1988.

\bibitem{Chen2020}
X.~Chen, Z.-Q. Chen, and J.~Wang.
\newblock Heat kernel for non-local operators with variable order.
\newblock {\em Stochastic Process. Appl.}, 130(6):3574--3647, 2020.

\bibitem{Delia2021}
M.~D'Elia and P.~Bochev.
\newblock Formulation, analysis and computation of an optimization-based local-to-nonlocal coupling method.
\newblock {\em Results Appl. Math.}, 9:100129, 2021.

\bibitem{Delia2022}
M.~D'Elia and C.~Glusa.
\newblock A fractional model for anomalous diffusion with increased variability: analysis, algorithms and applications to interface problems.
\newblock {\em Numer. Methods Partial Differ. Equ.}, 38(6):2084--2103, 2022.

\bibitem{Du2013a}
Q.~Du, M.~Gunzburger, R.~B. Lehoucq, and K.~Zhou.
\newblock A nonlocal vector calculus, nonlocal volume-constrained problems, and nonlocal balance laws.
\newblock {\em Math. Models Methods Appl. Sci.}, 23(3):493--540, 2013.

\bibitem{Duo2015}
S.~Duo and Y.~Zhang.
\newblock Computing the ground and first excited states of the fractional {Schr{\"o}dinger} equation in an infinite potential well.
\newblock {\em Commun. Comput. Phys.}, 18(2):321--350, 2015.

\bibitem{Duo2019-FDM}
S.~Duo and Y.~Zhang.
\newblock Accurate numerical methods for two and three dimensional integral fractional {Laplacian} with applications.
\newblock {\em Comput. Methods Appl. Mech. Eng.}, 355:639--662, 2019.

\bibitem{Duo2019-TFL}
S.~Duo and Y.~Zhang.
\newblock Numerical approximations for the tempered fractional {Laplacian}: error analysis and applications.
\newblock {\em J. Sci. Comput.}, 81(1):569--593, 2019.

\bibitem{Duo2018}
S.~Duo, H.~W. van Wyk, and Y.~Zhang.
\newblock A novel and accurate finite difference method for the fractional {Laplacian} and the fractional {Poisson} problem.
\newblock {\em J. Comput. Phys.}, 355:233--252, 2018.

\bibitem{Farquhar2018}
M.~E. Farquhar, T.~J. Moroney, Q.~Yang, I.~W. Turner, and K.~Burrage.
\newblock Computational modelling of cardiac ischaemia using a variable-order fractional {Laplacian}.
\newblock {\em arXiv:1809.07936}, 2018.

\bibitem{Giusti2020}
A.~Giusti.
\newblock {MOND}-like fractional {Laplacian} theory.
\newblock {\em Phys. Rev. D}, 101(12):124029, 2020.

\bibitem{Giusti2020on}
A.~Giusti, R.~Garrappa, and G.~Vachon.
\newblock On the {Kuzmin} model in fractional {Newtonian} gravity.
\newblock {\em Eur. Phys. J. Plus}, 135(10):798, 2020.

\bibitem{Hao0024}
Z.~Hao, S.~Shi, Z.~Zhang, and R.~Du.
\newblock A simple and fast finite difference method for the integral fractional {Laplacian} of variable order.
\newblock {\em SIAM J. Sci. Comput.}, 48(2):A748--A783, 2026.

\bibitem{Kuhn2021}
F.~K{\"u}hn.
\newblock Schauder estimates for {Poisson} equations associated with non-local {Feller} generators.
\newblock {\em J. Theor. Probab.}, 34(3):1506--1578, 2021.

\bibitem{Landkof1972}
N.~S. Landkof.
\newblock {\em Foundations of Modern Potential Theory}.
\newblock Springer Berlin, 1972.

\bibitem{Lei2023}
W.~Lei, G.~Turkiyyah, and O.~Knio.
\newblock Finite element discretizations for variable-order fractional diffusion problems.
\newblock {\em J. Sci. Comput.}, 97(1):5, 2023.

\bibitem{Lenzi2016}
E.~K. Lenzi, H.~V. Ribeiro, A.~A. Tateishi, R.~S. Zola, and L.~R. Evangelista.
\newblock Anomalous diffusion and transport in heterogeneous systems separated by a membrane.
\newblock {\em Proc. R. Soc. A Math. Phys. Eng. Sci.}, 472(2195):20160502, 2016.

\bibitem{Liang2022}
S.~Liang, G.~Molica~Bisci, and B.~Zhang.
\newblock Sign-changing solutions for {Kirchhoff}-type problems involving variable-order fractional {Laplacian} and critical exponents.
\newblock {\em Nonlinear Anal.: Model. Control}, 27(3):556--575, 2022.

\bibitem{Luo2018}
D.~Luo and J.~Wang.
\newblock Coupling by reflection and {H\"older} regularity for non-local operators of variable order.
\newblock {\em Trans. Amer. Math. Soc.}, 371(1):431--459, 2019.

\bibitem{Mu2021}
X.~Mu, J.~Huang, L.~Wen, and S.~Zhuang.
\newblock Modeling viscoacoustic wave propagation using a new spatial variable-order fractional {Laplacian} wave equation.
\newblock {\em Geophysics}, 86(6):T487--T507, 2021.

\bibitem{Patnaik2020review}
S.~Patnaik, J.~P. Hollkamp, and F.~Semperlotti.
\newblock Applications of variable-order fractional operators: a review.
\newblock {\em Proc. R. Soc. A Math. Phys. Eng. Sci.}, 476(2234):20190498, 2020.

\bibitem{Patnaik2021-fracture}
S.~Patnaik and F.~Semperlotti.
\newblock Variable-order fracture mechanics and its application to dynamic fracture.
\newblock {\em npj Comput. Mater.}, 7(1):27, 2021.

\bibitem{Ros-Oton2014}
X.~Ros-Oton and J.~Serra.
\newblock The {Dirichlet} problem for the fractional {Laplacian}: regularity up to the boundary.
\newblock {\em J. Math. Pures Appl.}, 101(3):275--302, 2014.

\bibitem{Samko2013}
S.~G. Samko.
\newblock Fractional integration and differentiation of variable order: an overview.
\newblock {\em Nonlinear Dyn.}, 71(4):653--662, 2013.

\bibitem{Samko1993}
S.~G. Samko and B.~Ross.
\newblock Integration and differentiation to a variable fractional order.
\newblock {\em Integral Transforms Spec. Funct.}, 1(4):277--300, 1993.

\bibitem{Sun2019}
H.~Sun, A.~Chang, Y.~Zhang, and W.~Chen.
\newblock A review on variable-order fractional differential equations: mathematical foundations, physical models, numerical methods and applications.
\newblock {\em Fract. Calc. Appl. Anal.}, 22(1):27--59, 2019.

\bibitem{Tao2019}
Y.~Tao, X.~Tian, and Q.~Du.
\newblock Nonlocal models with heterogeneous localization and their application to seamless local-nonlocal coupling.
\newblock {\em Multiscale Model. Simul.}, 17(3):1052--1075, 2019.

\bibitem{Tsuchiya1992}
M.~Tsuchiya.
\newblock L{\'e}vy measure with generalized polar decomposition and the associated {SDE} with jumps.
\newblock {\em Stoch. Stoch. Rep.}, 38(2):95--117, 1992.

\bibitem{Vanthin2021}
N.~Van~Thin.
\newblock On the variable-order fractional {Laplacian} equation with variable growth on $\mathbb{R}^N$.
\newblock {\em Taiwanese J. Math.}, 25(6):1187--1223, 2021.

\bibitem{Wang2021}
L.~Wang and B.~Zhang.
\newblock Infinitely many solutions for {Kirchhoff}-type variable-order fractional {Laplacian} problems involving variable exponents.
\newblock {\em Appl. Anal.}, 100(11):2418--2435, 2021.

\bibitem{Wu2024}
Y.~Wu and Y.~Zhang.
\newblock Variable-order fractional {Laplacian} and its accurate and efficient computations with meshfree methods.
\newblock {\em J. Sci. Comput.}, 99(1):18, 2024.

\bibitem{Xiang2019}
M.~Xiang, B.~Zhang, and D.~Yang.
\newblock Multiplicity results for variable-order fractional {Laplacian} equations with variable growth.
\newblock {\em Nonlinear Anal. Theory Methods Appl.}, 178:190--204, 2019.

\bibitem{Yu2022}
B.~Yu, X.~Zheng, P.~Zhang, and L.~Zhang.
\newblock Computing solution landscape of nonlinear space-fractional problems via fast approximation algorithm.
\newblock {\em J. Comput. Phys.}, 468:111513, 2022.

\bibitem{Zhang2011}
Y.~Zhang, J.~Peterson, and M.~Gunzburger.
\newblock Maximizing critical currents in superconductors by optimization of normal inclusion properties.
\newblock {\em Physica D}, 240(21):1701--1713, 2011.

\bibitem{zhang2025}
Y.~Zhang, X.~Zhao, and S.~Zhou.
\newblock Fourier pseudospectral methods for the variable-order space fractional wave equations.
\newblock {\em Comput. Phys. Commun.}, 318:109876, 2026.

\bibitem{Zhou2024}
S.~Zhou and Y.~Zhang.
\newblock A novel and simple spectral method for nonlocal {PDEs} with the fractional {Laplacian}.
\newblock {\em Comput. Math. Appl.}, 168:133--147, 2024.

\bibitem{Zhu2014}
T.~Zhu and J.~M. Harris.
\newblock Modeling acoustic wave propagation in heterogeneous attenuating media using decoupled fractional {Laplacians}.
\newblock {\em Geophysics}, 79(3):T105--T116, 2014.

\end{thebibliography}
\end{document}